\documentclass[12pt]{amsart}
\usepackage[T1]{fontenc}
\usepackage{amscd, amsfonts, amsmath, amssymb, amsthm, enumerate, flexisym, float, graphicx, hyperref, pdfpages,tikz, mathtools,comment, pinlabel, subcaption, tabu, tikz, tikz-cd, url, wasysym}
\usepackage[autostyle]{csquotes}
\usepackage[all,cmtip]{xy}
\hypersetup{colorlinks=true, linkcolor=blue,citecolor=blue}
\usetikzlibrary{cd, arrows, calc, knots, patterns, positioning, shapes.callouts, shapes.geometric, shapes.symbols}
\tikzset{>=stealth', arrow/.style={->}}
\makeatletter
\@namedef{subjclassname@2020}{Mathematics Subject Classification \textup{2020}}

\newsavebox{\@brx}
\newcommand{\llangle}[1][]{\savebox{\@brx}{$\m@th{#1\langle}$}%
	\mathopen{\copy\@brx\kern-0.5\wd\@brx\usebox{\@brx}}}
\newcommand{\rrangle}[1][]{\savebox{\@brx}{$\m@th{#1\rangle}$}%
	\mathclose{\copy\@brx\kern-0.5\wd\@brx\usebox{\@brx}}}

\makeatother

\newtheorem{theorem}{Theorem}[section]
\newtheorem{cor}[theorem]{Corollary}
\newtheorem{prop}[theorem]{Proposition}
\newtheorem{lem}[theorem]{Lemma}
\newtheorem{rem}[theorem]{Remark}

\newtheorem{example}[theorem]{Example}
\newtheorem{prob}[theorem]{Problem}
\newtheorem{conj}[theorem]{Conjecture}

\newcommand{\Map}{\mathrm{Map}}

\newcommand{\Z}{\mathbb{Z}}

\newcommand{\R}{\mathbb{R}}

\newcommand{\M}{\mathcal{M}}
\newcommand{\D}{\mathcal{D}}
\newcommand{\C}{\mathcal{C}}
\newcommand{\E}{\mathcal{E}}
\newcommand{\W}{\mathcal{W}}

\newcommand{\Ca}{\textbf{C}}
\newcommand{\ca}{\textbf{c}}
\newcommand{\Za}{\textbf{Z}}
\newcommand{\Ba}{\textbf{B}}
\newcommand{\mcg}{\mathcal{M}}
\newcommand{\Ha}{\textbf{H}}

\newcommand{\Aut}{\operatorname{Aut}}
\newcommand{\Adj}{\operatorname{Adj}}
\newcommand{\Conj}{\operatorname{Conj}}
\newcommand{\Inn}{\operatorname{Inn}}
\everymath{\displaystyle}

\title{Dehn quandles of surfaces and their bounded cohomology}
\author{Pankaj Kapari}
\address{Department of Mathematical Sciences, Indian Institute of Science Education and Research (IISER) Mohali,  Sector 81, S. A. S. Nagar, P. O. Manauli,    Punjab 140306, India.}
\email{pankajkapri02@gmail.com}

\author{Deepanshi Saraf}
\address{Department of Mathematics, Indian Institute of Science Education and Research (IISER) Tirupati, Srinivasapuram, Yerpedu, Tirupati, Andhra Pradesh, 517619, India.}
\email{saraf.deepanshi@gmail.com}

\author{Mahender Singh}
\address{Department of Mathematical Sciences, Indian Institute of Science Education and Research (IISER) Mohali,  Sector 81, S. A. S. Nagar, P. O. Manauli,    Punjab 140306, India.}
\email{mahender@iisermohali.ac.in}

\subjclass[2020]{Primary 57K12; Secondary 57K20}
\keywords{Amenable group, Dehn quandle, bounded quandle cohomology, bounded group cohomology, idempotent, quandle ring, quasimorphism}

\begin{document}

\begin{abstract}
We introduce new families of quandles that serve as invariants for classifying closed orientable surfaces. These families generalize the classical Dehn quandle and are defined, respectively, on isotopy classes of unoriented closed curves and on integral weighted multicurves. We establish their fundamental algebraic properties and construct a natural quandle covering that relates them. We then analyze their metric properties, showing that these quandles are unbounded with respect to the quandle metric. Next, we compute their second bounded quandle cohomology, proving it to be infinite-dimensional. We also establish a version of the Gromov Mapping Theorem, showing that the natural map from an abelian quandle extension onto the original quandle induces an injection on bounded quandle cohomology in every dimension. Finally, inspired by recent developments in quandle rings, we analyze idempotents in the integral quandle rings arising from the classical Dehn quandle of a surface.
\end{abstract}

\maketitle

\section{Introduction}
Quandles are algebraic structures whose defining operation encodes the Reidemeister moves of link diagrams. They arise naturally in diverse areas such as knot theory, group theory, surface theory, quantum algebra, symmetric spaces, and Hopf algebras. A (co)homology theory for quandles was first introduced by Fenn, Rourke, and Sanderson \cite{MR1364012} via a homotopy-theoretic approach using classifying spaces. Carter et al. \cite{MR1990571} subsequently applied quandle cohomology to knot theory through the construction of state-sum invariants. These theories were later generalized in \cite{MR1994219, MR3558231}. More recently, Szymik \cite{MR3937311} interpreted quandle cohomology within the framework of Quillen cohomology.

\par

Motivated by bounded cohomology of groups, as introduced by Johnson \cite{MR0374934} and later developed by Gromov \cite{MR0686042}, K\k{e}dra \cite{MR4779104} recently introduced a bounded cohomology theory for quandles and related it to a natural metric based on their inner symmetries. He showed that a quandle is bounded with respect to this metric if and only if the comparison map from second bounded cohomology to ordinary second quandle cohomology is injective. Furthermore, he proved that fundamental quandles of non-trivial knots and free quandles are unbounded, implying the non-triviality of their second bounded cohomology. More recently, it was shown in \cite{arXiv:2502.04069} that the second bounded cohomology of the fundamental quandle of any non-split link with a non-solvable link group, as well as that of any split link, is infinite-dimensional. As a consequence, the second bounded cohomology of the fundamental quandle of a knot detects the unknot.

\par
This paper is motivated by a family of quandles arising from surfaces and their bounded cohomology. Let $S_g$ be a closed orientable surface of genus $g \ge 1$ and $\D_g$ the set of isotopy classes of simple closed curves on $S_g$. The binary operation $\alpha * \beta= T_\beta(\alpha),$ where $\alpha, \beta \in \D_g$ and $T_\beta$ is the Dehn twist along $\beta$, equips $\D_g$ with the structure of a quandle, known as the Dehn quandle of the surface $S_g$. These quandles originally appeared in the work of Zablow \cite{MR1967241, MR2699808}. Notably, Niebrzydowski and Przytycki \cite{MR2583322} proved that the Dehn quandle of the torus is isomorphic to the fundamental quandle of the trefoil knot. However, this phenomenon is exceptional and does not persist in general \cite{MR3197056}. In \cite{MR4642200}, two approaches were developed to obtain explicit presentations for such quandles.
\par

In this paper, we introduce new families of quandles that enrich the classical Dehn quandles, arising from collections of closed curves on closed orientable surfaces and from integral weighted multicurves. Since the basis of the Goldman Lie algebra \cite{MR0846929} is the set of all closed curves on these surfaces, our motivation for defining a quandle structure on these collections of curves comes from examining a connection between integral quandle rings and the Goldman Lie algebra. It is known that the quandle structure on the classical Dehn quandle extends to a structure on the set of measured foliations on the torus \cite{MR3205562}. For surfaces of higher genus, since multicurves with positive real weights are dense in the space of measured foliations \cite{MR0956596}, we define a quandle structure on integral weighted multicurves with the hope that a quandle structure might also arise on the entire space of measured foliations. Remarkably, these quandles classify closed orientable surfaces of genus at least three. We prove their core algebraic properties, compute their second bounded quandle cohomology, and, along the way, we establish an analogue of the Gromov Mapping Theorem for abelian quandle extensions. Finally, we study idempotents in the integral quandle rings associated with Dehn quandles.
\par

This paper is organized as follows. In Section \ref{section preliminary}, we review the basic concepts of quandles, bounded cohomology of groups, and bounded cohomology of quandles. In Section \ref{section quandles on surfaces}, we introduce two new families of quandles containing $\D_g$ for each $g \ge 1$. The first, denoted by $\C_g$, is defined on the set of isotopy classes of all unoriented closed curves on $S_g$, while the second, denoted by $\W_g$, is defined on the set of all integral weighted multicurves on $S_g$ (see Propositions \ref{Cg quandle} and \ref{Dg quandle}). For the subquandle $\W_g^+$ of $\W_g$ consisting of positively weighted multicurves, we prove the existence of a quandle covering $\C_g \to \W_g^+$ (Theorem \ref{theorem covering}). As a consequence of our description of the inner automorphism groups of these quandles, we show that they classify surfaces of genus greater than two (Proposition \ref{classifying surfaces}). In Section \ref{section Metrics on Dehn quandles}, we study metrics on these quandles and prove that they are unbounded with respect to the quandle metric (Proposition \ref{unbounded quandles}). For the quandle $\D_g$, we further demonstrate that the quandle metric and the curve complex metric are not comparable when $g \ge 2$ (Proposition \ref{prop:comparison_of_distance}). In Section \ref{section Bounded cohomology of Dehn quandles}, we investigate the bounded cohomology of these families of quandles, and prove that the second bounded cohomology of each of these quandles is infinite-dimensional (Theorem \ref{thm:D_g_cohomolofy_id}). We also establish an analogue of the Gromov Mapping Theorem for abelian quandle extensions (Theorem \ref{gmt for quandles}), which states that if $E$ is an abelian extension of a quandle $X$ by a 2-cocycle on $X$, then the natural map $E \to X$ induces an injection of bounded quandle cohomology groups in each dimension. In Section \ref{section Idempotents in integral quandle rings}, we study quandle rings, which are non-associative rings introduced to incorporate ring- and module-theoretic methods into the study of quandles. Motivated by recent work on the structure of idempotents in such rings, we analyse the idempotents in the integral quandle rings associated with $\D_g$ (Proposition \ref{idempotent structure}), and propose a general conjecture describing their form.
\medskip

\section{Preliminaries}\label{section preliminary}
This section reviews the essential preliminaries used throughout the paper.

\subsection{Quandles} To establish our conventions, recall that a {\it quandle} is a set $X$ equipped with a binary operation $\ast$ that satisfies the following axioms:
	\begin{enumerate}
		\item $x \ast x = x$ for all $x \in X$.
		\item For each pair $x, y \in X$, there exists a unique $z \in X$ such that $x = z \ast y$.
		\item $(x \ast y) \ast z = (x \ast z) \ast (y \ast z)$ for all $x, y, z \in X$.
	\end{enumerate}

An algebraic structure satisfying the last two axioms is called a {\it rack}. The second quandle axiom is equivalent to the existence of a dual binary operation on $X$, denoted by $(x, y) \mapsto x \ast^{-1} y$, satisfying $x \ast y = z$ if and only if $x = z \ast^{-1} y$ for all $x, y, z \in X$. The morphisms of quandles are defined in the usual way. For each $x \in X$, the map $S_x: X \rightarrow X$ given by $S_x(y) = y \ast x$ is an automorphism of $X$ that fixes $x$. The group $\Inn(X)$ generated by such automorphisms is called the {\it inner automorphism group} of $X$. There is a natural left action of $\Inn(X)$ on $X$, defined by $(f, x) \mapsto f(x)$ for $f \in \Inn(X)$ and $x \in X$. The orbits of this action are called the {\it connected components} of $X$, and the quandle is called {\it connected} if it has only one orbit.
\par

Knots and links constitute a rich source of quandle structures.

\begin{example}\label{lem:cosets_quandle}
{\rm Let $L$ be an oriented link in $\mathbb{S}^3$. In \cite[Section 4.5]{MR2628474} and \cite[Section 6]{MR0672410}, Joyce and Matveev independently gave a topological construction of the {\it fundamental quandle} $Q(L)$ of $L$, and proved it to be an invariant of the isotopy type of $L$. Further, they proved that $Q(L)$ can also be obtained from a regular diagram $D$ of $L$. Suppose that $D$ has $s$ arcs and $t$ crossings. If we assign labels $x_{1}, \ldots, x_{s}$ to the arcs of $D$ and introduce a relation $r_l$ given by  $x_k*x_j=x_i$ or $x_k*^{-1}x_j=x_i$ at the $l$-th crossing of $D$ depending on the sign of this crossing, then $$Q(L) \cong \langle x_1, \ldots, x_s \mid r_1,  \dots, r_t\rangle.$$}
\end{example}

Beyond knots and surfaces, quandle structures arise naturally in a wide range of settings. In particular, groups provide an important source of examples. Notably, each group $G$ can be turned into a quandle $\Conj(G)$ by defining $x*y=yxy^{-1}$. We will rely heavily on the following group-theoretic construction of a quandle. 

\begin{example}\label{lem:cosets_quandle 2}
{\rm Let $G$ be a group, $\{z_i \mid i \in I \}$ a set of elements of $G$, and $\{H_i \mid i \in I \}$ a set of subgroups of $G$ such that $H_i \le C_G(z_i)$ for each $i$, where $C_G(z_i)$ is the centraliser of $z_i$ in $G$. Then we can define a quandle structure on the disjoint union $\sqcup_{i \in I} G/H_i$ of right cosets by
			$$H_i x \ast H_j y = H_i z_ixy^{-1}z_j^{-1} y.$$ 
We denote this quandle by $\sqcup_{i \in I} (G/H_i, z_i )$. }
\end{example}

It is known that any quandle $X$ can be expressed as $\sqcup_{i\in I}(G/H_i,z_i)$ for a suitable group $G$ acting on the set $X$ \cite[Section 2.4]{MR2628474}. In fact, Joyce \cite{MR2628474} considered the case  $G=\Aut(X)$, while Nosaka \cite[Chapter 2]{MR3729413} took $G=\Adj(X)$ and noted that we may also take $G=\Inn(X)$. Since this result will be required later, we provide a proof below.

\begin{prop}
\label{prop:quandle_as_coset_quandle}
Let $X$ be a quandle. Then $X$ is isomorphic to $\sqcup_{i\in I}(\Inn(X)/H_i,S_{x_i})$, where $I$ has the cardinality of the number of connected components of $X$, $S_{x_i}\in Inn(X)$ is the inner automorphism of $X$ corresponding to $x_i\in X$ and $H_i= \mathrm{Stab}_{\Inn(X)}(x_i)$ is the stabilizer of $x_i$ in $\Inn(X)$.
\end{prop}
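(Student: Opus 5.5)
The plan is to write down an explicit map from the coset quandle to $X$ and check directly that it is a well-defined bijective quandle homomorphism. First choose a set of representatives $\{x_i \mid i\in I\}$, one point in each connected component (i.e.\ each $\Inn(X)$-orbit) of $X$, and put $z_i=S_{x_i}$ and $H_i=\mathrm{Stab}_{\Inn(X)}(x_i)$.

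The computations rest on one identity. Every automorphism $f$ of $X$ satisfies $fS_af^{-1}=S_{f(a)}$, since
$$fS_af^{-1}(y)=f\bigl(f^{-1}(y)\ast a\bigr)=y\ast f(a).$$
Applying this with $f=h\in H_i$ gives $hS_{x_i}h^{-1}=S_{h(x_i)}=S_{x_i}$. Hence $H_i\le C_{\Inn(X)}(S_{x_i})$, so the hypothesis of Example \ref{lem:cosets_quandle 2} holds and $\sqcup_{i\in I}(\Inn(X)/H_i,S_{x_i})$ is a quandle.

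Since the cosets are right cosets and $\Inn(X)$ acts on the left, the right candidate map is
$$\varphi\colon \sqcup_{i\in I}\Inn(X)/H_i\to X,\qquad \varphi(H_if)=f^{-1}(x_i).$$
The verification proceeds in four steps.
\begin{enumerate}
\item \emph{Well-definedness.} For $h\in H_i$ we have $(hf)^{-1}(x_i)=f^{-1}h^{-1}(x_i)=f^{-1}(x_i)$.
\item \emph{Injectivity on each piece.} We have $f^{-1}(x_i)=g^{-1}(x_i)$ if and only if $gf^{-1}\in H_i$, that is, if and only if $H_ig=H_if$.
\item \emph{Injectivity across pieces and surjectivity.} The image of the $i$-th piece is exactly the orbit of $x_i$. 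Distinct indices give distinct orbits, and the orbits cover $X$.
\item \emph{Homomorphism.} Using that $S_{x_i}^{-1}$ fixes $x_i$ and the conjugation identity above,
$$\varphi(H_ix\ast H_jy)=(S_{x_i}xy^{-1}S_{x_j}^{-1}y)^{-1}(x_i)=y^{-1}S_{x_j}y\bigl(x^{-1}(x_i)\bigr)=S_{y^{-1}(x_j)}\bigl(\varphi(H_ix)\bigr).$$
The right-hand side equals $\varphi(H_ix)\ast\varphi(H_jy)$.
\end{enumerate}

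The only delicate point is matching the conventions: right cosets in Example \ref{lem:cosets_quandle 2} against the left action of $\Inn(X)$. This is why $\varphi$ uses $f^{-1}$ rather than $f$; with $f$ the map would fail to be well defined on right cosets. Once that choice is made, everything reduces to the identity $fS_af^{-1}=S_{f(a)}$.
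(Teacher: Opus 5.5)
Your proof is correct and is essentially the paper's argument. The paper's map $\phi_i(H_ig)=x_i\cdot g=g^{-1}(x_i)$, built from the right action $x\cdot g=g^{-1}(x)$ and the orbit--stabilizer theorem, is exactly your $\varphi$, and its homomorphism check uses the same two facts, $S_{x_i}\in H_i$ and $h^{-1}S_{x_j}h=S_{h^{-1}(x_j)}$; the only difference is that you verify well-definedness and bijectivity by hand where the paper cites orbit--stabilizer.
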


\begin{proof}
Note that there is a natural left action of $\Inn(X)$ on $X$. This gives a right action of $\Inn(X)$ on $X$ defined as $x \cdot g=g^{-1}(x)$ for $x \in X$ and $g \in \Inn(X)$. Let $\{X_i \mid i \in I \}$ be the set of orbits under this right action of $\Inn(X)$ on $X$. Note that each $X_i$ is a subquandle of $X$, and we can write $X=\sqcup_{i\in I} X_i$. Fix an $x_i\in X_i$ for each $i \in I$. Let $H_i= \mathrm{Stab}_{\Inn(X)}(x_i)$ be the stabilizer of $x_i$ in $\Inn(X)$. If $g \in H_i$, then $x_i \cdot g= x_i$, that is, $g^{-1}(x_i)=x_i$. This implies that $g^{-1}S_{x_i}g=S_{g^{-1}(x_i)}=S_{x_i}$, and hence $g\in C_{\Inn(X)}(S_{x_i})$. Thus, by Example \ref{lem:cosets_quandle 2}, we obtain the quandle $\sqcup_{i \in I}(\Inn(X)/H_i,S_{x_i})$. By the orbit-stabilizer theorem, for each $i \in I$, there is a bijective map $\phi_i:\Inn(X)/H_i\to X_i$ defined as $\phi_i(H_ig) = x_i \cdot g$. This gives a bijection $\phi:\sqcup_{i \in I}\Inn(X)/H_i \to X$ between the disjoint unions. We claim that $\phi$ is a quandle homomorphism. Since $S_{x_i}\in H_i$, we have 
\begin{eqnarray*}
\phi(H_ig\ast H_j h) &=& \phi(H_iS_{x_i} gh^{-1}S_{x_j}^{-1}h)\\
&=& \phi_i(H_igh^{-1}S_{x_j}^{-1}h)\\
&=& x_i \cdot (gh^{-1}S_{x_j}^{-1}h)\\
&=& (h^{-1}S_{x_j}h g^{-1})(x_i)\\
&=& (S_{h^{-1}(x_j)}g^{-1})(x_i)\\
&=& g^{-1}(x_i) * h^{-1}(x_j)\\
&=& (x_i \cdot g)\ast (x_j \cdot h)\\
&=& \phi_i(H_i g)\ast \phi_j(H_j h)\\
&=& \phi(H_i g)\ast \phi(H_j h)
\end{eqnarray*}
for all $g, h \in \Inn(X)$, and the proof is complete.
\end{proof}

\subsection{Bounded cohomology of groups}
Next, we recall the definition of bounded cohomology of groups \cite{MR3726870}. We first recall the construction of the cochain complex defining the group cohomology with real coefficients \cite{MR1324339}. Let $G$ be a group. For each $n \geq 0$, let $\Ca^n(G, \mathbb{R})$ be the real vector space with the basis consisting of maps $G^n \to \mathbb{R}$, where we take $G^0$ to be the trivial group. The coboundary map $$\partial^n:\Ca^n(G, \mathbb{R}) \to \Ca^{n+1}(G, \mathbb{R})$$ is defined by
\begin{equation}\label{eqn1}
\partial^n(\sigma)(g_1,\ldots,g_{n+1}) = \sigma(g_2,\ldots,g_{n+1})  +  \sum_{i=1}^n (-1)^{i}\sigma(g_1,\ldots,g_ig_{i+1},...,g_{n+1}) + \sigma(g_1,\ldots,g_n),
\end{equation}
for all $\sigma \in \Ca^n(G, \mathbb{R})$ and $(g_1,\ldots,g_{n+1}) \in G^{n+1}$. The cohomology $\Ha^*(G,\mathbb{R})$ of the cochain complex $\Ca^*(G,\mathbb{R})$ is defined to be the {\it group cohomology} of $G$ with coefficients in $\mathbb{R}$. Let $\Ca^*_b(G, \mathbb{R})$ denote the cochain complexes of bounded chains with respect to the sup-norm, which is defined as $$\|f\|_\infty = \sup \{ |f(x)|~ \mid x \in G \}$$ for each map $f : G \to \mathbb{R}$. Then the cohomology $\Ha_b^*(G,\mathbb{R})$ of $\Ca^*_b(G,\mathbb{R})$ is called the {\it bounded group cohomology} of $G$ with coefficients in $\mathbb{R}$. We denote the group of bounded group cocycles and that of bounded group coboundaries by $\Za_b^*(G,\mathbb{R})$ and $\Ba^*_b(G,\mathbb{R})$, respectively. The inclusion $\Ca^*_b(G,\mathbb{R}) \to \Ca^*(G, \mathbb{R})$ induces the {\it comparison homomorphism} $$\ca^* : \Ha^*_b(G,\mathbb{R}) \to \Ha^*(G,\mathbb{R}).$$
\par

A \textit{group quasimorphism} on a group $G$ is a map $f: G \to \mathbb{R}$ such that
\begin{equation} \label{eq:defect}
D(f) := \sup_{g, h \in G} |f(g) +f(h) -f(gh)| < \infty.
\end{equation}
The constant $D(f)$ is called the \textit{defect} of $f$. In terms of group cohomology, $f$ is a group quasimorphism if and only if $\partial^1(f)$ is bounded with respect to the sup-norm. We denote the vector space of all group quasimorphisms on $G$ by $\mathcal{G}(G,\mathbb{R}).$
\par	

A group quasimorphism $f: G \to \mathbb{R}$ is called \textit{homogeneous} if it satisfies $f(g^n) = n \,f(g)$ for any $g \in G$ and $n \in \mathbb{Z}$. The subspace of all homogeneous group quasimorphisms on $G$ is denoted by $\mathcal{HG}(G, \mathbb{R}).$
\par

Let $\ca^* : \Ha^*_b(G,\mathbb{R}) \to \Ha^*(G,\mathbb{R})$ be the comparison homomorphism. By \cite[Corollary 2.11]{MR3726870}, we have
$$\mathcal{G}(G, \mathbb{R}) = \mathcal{HG}(G, \mathbb{R}) \oplus \Ca^1_b(G, \mathbb{R}) $$
		and
$$ \mathcal{HG}(G, \mathbb{R}) / \text{Hom}(G, \mathbb{R}) \cong \ker(\ca^2).$$
\medskip

\subsection{Bounded cohomology of quandles}
We now recall the definition of bounded cohomology of racks and quandles \cite{MR4779104}. Let $X$ be a rack and $n \ge 0$ an integer. Let $C^n(X, \mathbb{R})$ be the real vector space with the basis consisting of set-theoretic maps $X^n \to \mathbb{R}$, where $X^0$ is a singleton set. The coboundary operator $\delta^{n-1} : C^{n-1}(X, \mathbb{R}) \to C^{n}(X, \mathbb{R})$ is defined by
	\begin{equation} \label{eq:coboundary_operator}
		\delta^{n-1} f(x_1, \ldots, x_n) = \sum_{i=2}^{n} (-1)^i \big(f(x_1, \ldots, x_{i-1}, x_{i+1}, \ldots, x_n) - f(x_1* x_i, \ldots, x_{i-1}* x_i, x_{i+1}, \ldots, x_n)\big)
	\end{equation}
for all $f \in C^{n-1}(X, \mathbb{R})$ and $x_i \in X$. A routine check shows that $C^*(X, \mathbb{R}) = \{C^n(X, \mathbb{R}), \delta^n\}$ is a cochain complex, and the cohomology $H^n(X, \mathbb{R})$ of this cochain complex is called the {\it rack cohomology} of the rack $X$.
\par
In addition, if $X$ is a quandle, then we consider the subcomplex $D^*(X, \mathbb{R})$ of $C^*(X, \mathbb{R})$ defined by
	\begin{equation} \label{eq:subcomplex}
		D^n(X, \mathbb{R}) = \{ f \in C^{n}(X, \mathbb{R}) \mid f(x_1, \ldots, x_n) = 0~\text{whenever}~ x_i = x_{i+1}~ \text{for some}~ i \}
	\end{equation}
	for $n \geq 2$, and $D^n(X, \mathbb{R}) = 0$ for $n \leq 1$. This gives the quotient cochain complex $\overline{C}^*(X, \mathbb{R}) = \{C^n(X, \mathbb{R})/D^n(X, \mathbb{R})), \delta^n\}$, where $\delta^n$ is the induced coboundary operator. The cohomology of this complex is called the {\it quandle cohomology} of $X$. For ease of notation, we denote both the rack and the quandle cohomology by $H^n(X, \mathbb{R})$. We refer the reader to \cite[Section 3]{MR1990571} for details on quandle cohomology. 	
\par

The bounded cohomology of racks and quandles can be defined similarly to that of groups \cite{MR4779104}. If $X$ is a rack, then we have the sup-norm defined by $$\|f\|_\infty = \sup \{ |f(x)|~ \mid x \in X \}$$ 
for each map $f : X \to \mathbb{R}$. Let $C^n_{b}(X, \mathbb{R})$ be the subspace of $C^n(X, \mathbb{R})$ consisting of all maps $X^n \to \mathbb{R}$ which are bounded with respect to the sup-norm. The coboundary operators $\delta^n$ restrict to $C^n_{b}(X, \mathbb{R})$, and hence $C^*_{b}(X, \mathbb{R}) = \{C^n_{b}(X, \mathbb{R}), \delta^n\}$ forms a cochain complex. The cohomology $H^n_b(X, \mathbb{R})$ of this complex is called the \textit{bounded rack cohomology}. The natural inclusion $C^n_{b}(X, \mathbb{R}) \hookrightarrow C^n(X, \mathbb{R})$ induces a homomorphism $$c^n : H^n_{b}(X, \mathbb{R}) \to H^n(X, \mathbb{R}),$$ called the \textit{comparison homomorphism}. 
If $X$ is a quandle, we can similarly define the \textit{bounded quandle cohomology} and establish comparison maps. For simplicity, we denote the bounded cohomology of both the rack and the quandle as $H^n_b(X, \mathbb{R})$.
\medskip

\section{New quandle structures on surfaces}\label{section quandles on surfaces}

For an orientable surface $S$ possibly with boundary and/or marked points, let $\M(S)$ denote its mapping class group. For an integer $g\geq 1$, if $S_g$ is the closed orientable surface of genus $g$, we denote its {\it mapping class group} simply by $\mathcal{M}_g$. Throughout, we will use the same notation for a curve and its isotopy class, unless otherwise necessary. A curve is called {\it essential} if it is not homotopic to a point or to a puncture or to a boundary component of the surface. Upto isotopy, we assume that any two distinct curves on the surface are either disjoint or they intersect transversely.
\par

We let $\D_g$ denote the set of isotopy classes of unoriented simple closed curves on $S_g$. It is important to note that $\D_g$ also includes the isotopy class of the null-homotopic curve, which we denote by $\textbf{0}$. However, it is worth mentioning that, in most of the literature, the element $\textbf{0}$ is typically not considered part of $\D_g$. For each $\alpha \in \D_g$, let $T_\alpha$ denote the (right-handed) Dehn twist along $\alpha$. It is well-known that the group $\mathcal{M}_g$ is generated by such Dehn twists. Interestingly, the set $\D_g$ becomes a quandle, called the {\it Dehn quandle} of $S_g$, with respect to the binary operation given by $$\alpha \ast \beta=T_{\beta}(\alpha)$$ for all $\alpha, \beta \in \D_g$. If $\D_g^{ns}$ denote the set of 
 isotopy classes of essential unoriented non-separating simple closed curves on $S_g$, then it forms a subquandle of $\D_g$. It follows from \cite[Proposition 5.4]{MR4669143} that both $\D_g^{ns}$ and $\D_g$ are finitely generated. Moreover, by \cite[Proposition 6.4]{MR4669143}, $\D_g^{ns}$ is connected, whereas $\D_g$ disconnected.
\smallskip

In this section, we present some generalisations of the preceding construction. Consider the set $\C_g$ of isotopy classes of all unoriented closed curves on $S_g$. Note that $\C_g$ contains the isotopy class of non-essential closed curves as well. There is a unique way of resolving self-intersection points of an unoriented closed curve on $S_g$, which we describe next. We fix an orientation on a closed curve $\alpha \in \C_g$. The local picture at every self-intersection point of $\alpha$ is shown on the left-hand side of Figure~\ref{fig:independence_of_resolution_on_the_orientation}.
\begin{figure}
\centering
\includegraphics[scale=1.5]{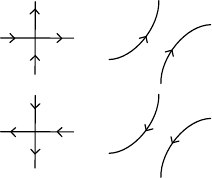}
\caption{Resolution of a self-intersection point of a non-simple closed curve and independence of the resultant multicurve on the chosen orientation.}
\label{fig:independence_of_resolution_on_the_orientation}
\end{figure} 
We then resolve these self-intersection points as shown on the right-hand side of Figure~\ref{fig:independence_of_resolution_on_the_orientation}. The same figure also shows that the outcome does not depend on the choice of the orientation on $\alpha$. This procedure of resolution of self-intersection points of non-simple closed curves is well-known, see, for example, \cite[Section 8]{MR1142906}. As an illustration, see Figure~\ref{fig:resolution_of_self_intersection_points} for the resolution of self-intersection points of a non-simple closed curve on the surface of genus two.
\begin{figure}
\centering
\includegraphics[scale=1.3]{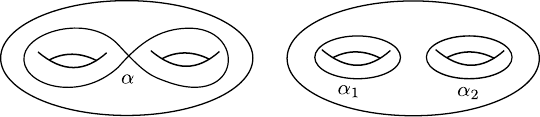}
\vspace{2mm}
\caption{Resolution of self-intersection points of a non-simple closed curve on $S_2$ into simple closed curves.}
\label{fig:resolution_of_self_intersection_points}
\end{figure}

Let $\alpha \in \C_g$ and $\alpha_1,\dots, \alpha_n$ be the simple closed curves obtained after resolving all the self-intersection points of $\alpha$. It could happen that some of these simple closed curves are isotopic. For instance, see Figure \ref{fig:resolution_of_nonsimple_curve_to_parallel_copies} for a non-simple closed curve on the torus whose resolution leads to two isotopic simple closed curves.
\begin{figure}
\centering
\includegraphics[scale=2.5]{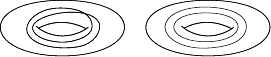}
\caption{Resolution of a closed curve leading to two isotopic simple closed curves.}
\label{fig:resolution_of_nonsimple_curve_to_parallel_copies}
\end{figure}
We refer to the multiset $\{\alpha_1,\dots, \alpha_n\}$ as the {\it associated multicurve} of $\alpha$.  Note the difference in our terminology from the traditional definition of a multicurve, which is a collection of pairwise disjoint simple closed curves; in that setting, a multicurve may include the element $\textbf{0}$. Given $\alpha \in \C_g$, we define 
$$T_{\alpha}=T_{\alpha_1}\cdots T_{\alpha_n},$$ 
where $\{\alpha_1,\dots,\alpha_n\}$ be the associated multicurve of $\alpha$.

\begin{prop}\label{Cg quandle}
For each $g \ge 1$, the set $\C_g$ forms a quandle with respect to the binary operation given by $$\beta \ast \alpha=T_{\alpha}(\beta)$$ 
for all $\alpha, \beta\in \C_g$.
\end{prop}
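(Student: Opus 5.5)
We need to check the three quandle axioms for $\beta \ast \alpha = T_\alpha(\beta)$. First we have to make sure $T_\alpha$ is well defined. The curves $\alpha_1,\dots,\alpha_n$ in the associated multicurve come from resolving every self-intersection, so they are pairwise disjoint simple closed curves, possibly with isotopic repetitions and possibly including $\textbf{0}$, for which $T_{\textbf{0}}=\mathrm{id}$. Disjoint Dehn twists commute, so $T_\alpha$ does not depend on the ordering. It also depends only on the isotopy class of $\alpha$: an isotopy of $\alpha$ either changes the associated multicurve by an isotopy, or passes through Reidemeister-type moves that change the multicurve only up to isotopy and trivial components. Granting this, $T_\alpha\in\mathcal{M}_g$. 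Since mapping classes act on isotopy classes of unoriented closed curves, $T_\alpha(\beta)\in\C_g$ and the operation is defined.

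The central step is a naturality statement: for every $f\in\mathcal{M}_g$ and $\alpha\in\C_g$,
$$T_{f(\alpha)}=fT_\alpha f^{-1}.$$
Resolution of self-intersections is a local operation, and a homeomorphism preserving orientation carries the resolution picture of Figure~\ref{fig:independence_of_resolution_on_the_orientation} to the same picture. So the associated multicurve of $f(\alpha)$ is $\{f(\alpha_1),\dots,f(\alpha_n)\}$, and the identity follows from the standard relation $T_{f(c)}=fT_cf^{-1}$ for simple closed curves $c$.

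With this in hand, the three axioms are checked in turn.

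\begin{enumerate}
\item \textbf{Idempotency.} We need $T_\alpha(\alpha)=\alpha$. Represent $\alpha$ together with the resolved curves $\alpha_i$ placed slightly off $\alpha$. Each $\alpha_i$ is built from arcs of $\alpha$ joined at the smoothings, so a pushed-off copy of $\alpha_i$ can be taken disjoint from $\alpha$ away from small disks around the double points. This shows that $i(\alpha,\alpha_i)=0$. Each $T_{\alpha_i}$ is supported in an annular neighbourhood of $\alpha_i$, which can then be isotoped off $\alpha$, so $T_{\alpha_i}(\alpha)=\alpha$ for every $i$, and hence $T_\alpha(\alpha)=\alpha$.

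This step is the main obstacle. The precise claim needed is that each component of the smoothing has geometric intersection number zero with the original curve. We would justify it by the local model: near a double point, the two smoothed strands can be pushed to lie on the same side as the corresponding arcs of $\alpha$, so no essential crossing is created. If that local argument turns out to be too weak in general, the fallback is to show that $T_{\alpha_i}$ fixes $\alpha$ by checking that every intersection of a pushed-off $\alpha_i$ with $\alpha$ cancels through a bigon.

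\item \textbf{Invertibility.} The map $S_\alpha=T_\alpha$ is a bijection of $\C_g$ with inverse $T_\alpha^{-1}$, so the required $z$ exists and is unique, namely $z=T_\alpha^{-1}(\beta)$.

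\item \textbf{Self-distributivity.} Using naturality with $f=T_\gamma$,
$$(\beta\ast\alpha)\ast\gamma=T_\gamma T_\alpha(\beta)=T_\gamma T_\alpha T_\gamma^{-1}T_\gamma(\beta)=T_{T_\gamma(\alpha)}(T_\gamma(\beta))=(\beta\ast\gamma)\ast(\alpha\ast\gamma).$$
\end{enumerate}
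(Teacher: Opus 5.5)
Your skeleton is the paper's: the naturality identity $T_{f(\alpha)}=fT_\alpha f^{-1}$, invertibility via $T_\alpha^{-1}$, and self-distributivity from naturality with $f=T_\gamma$. Those steps are fine. The gap is the step you flagged, idempotency, and it cannot be closed. The paper's proof only asserts $T_\alpha(\alpha)=\alpha$, and the claim $i(\alpha,\alpha_i)=0$ that your argument needs is false in general.

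Your local model is correct at a single double point: the smoothed arc there lies in one sector, on the same side of both strands it follows. But it does not globalize. Take an edge $e$ of $\alpha$ joining consecutive double points $p$ and $q$ and lying on a component $\alpha_i$. A parallel push-off of $\alpha_i$ is forced to the side away from the other smoothing arc at $p$, and likewise at $q$. Whenever these two partner arcs lie on opposite sides of $\alpha_i$, every push-off of $\alpha_i$ must cross $\alpha$ along $e$. Such crossings can be essential, so the bigon-cancellation fallback is not available either.

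Here is a concrete failure. Let $g\ge 2$, let $b$ be a separating curve with genus on both sides, and let $a$ and $c$ be non-separating curves on opposite sides of $b$. Orient them coherently and apply the construction of Figure~\ref{fig:band_connected_sum} to $a,b$ and to $b,c$.
\begin{enumerate}
\item The resulting $\alpha$ has two double points and associated multicurve $\{a,b,c\}$, so $T_\alpha=T_aT_bT_c$.
\item Each of $a$ and $c$ is involved in only one smoothing, so each can be pushed off $\alpha$. Hence $T_\alpha(\alpha)=T_b(\alpha)$.
\item Write $\pi_1(S_g)=A\ast_{\langle b\rangle}B$. Then $\alpha$ is represented by $xy$ with $x\in A\setminus\langle b\rangle$ and $y\in B\setminus\langle b\rangle$, and $T_b(\alpha)$ by $x\,byb^{-1}$ (for a suitable orientation of $b$).
\item Comparing classes in $H_1(S_g)$ rules out conjugacy with $(xy)^{-1}$. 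The conjugacy theorem for amalgamated products, together with malnormality of $\langle b\rangle$ in each free factor, rules out conjugacy with $xy$.
\end{enumerate}
Hence $T_\alpha(\alpha)\neq\alpha$, even up to homotopy. With the oriented resolution, axiom (1) genuinely fails on $\C_g$ for $g\ge 2$. The statement itself needs a modified definition or a restricted class of curves; a sharper push-off argument will not help.

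Separately, your well-definedness sentence is wrong if Reidemeister-type moves are allowed. A finger move that creates a bigon with oppositely oriented sides performs a band surgery on the resolution. For example, let $c$ be a separating curve bounding a one-holed torus $T$, and push a finger of $c$ across $c$ along a non-separating arc in $T$. This turns $\{c\}$ into $\{a',a',\mathbf{0}\}$ with $a'$ non-separating. So $T_\alpha$ is well defined only on ambient isotopy classes of immersed curves, and the counterexample above applies under that reading.
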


\begin{proof}
We set $\alpha \ast {\bf 0}= \alpha$ and ${\bf 0} \ast \alpha =\bf 0$ for all $\alpha\in \C_g$. Note that $\alpha \ast \alpha=T_\alpha(\alpha)=\alpha$ for all $\alpha\in \C_g$, which is the first quandle axiom. For $\alpha,\beta\in \C_g$, we have $T_{\alpha} \, T_{\alpha}^{-1}(\beta)=\beta= T_{\alpha}^{-1}\,  T_{\alpha} (\beta)$. Hence, the map $S_{\alpha}:\C_g\to \C_g$ defined by $S_{\alpha}(\gamma)= \gamma \ast \alpha=T_{\alpha}(\gamma)$ is bijective with the inverse given by $S_{\alpha}^{-1}(\gamma)= T_{\alpha}^{-1}(\gamma)$, which establishes the second quandle axiom. Let $\alpha\in \C_g$, $f\in \mcg_g$ and $\{\alpha_1,\dots,\alpha_n\}$ be the associated multicurve of $\alpha$. Then the associated multicurve of $f(\alpha)$ is $\{f(\alpha_1),\dots f(\alpha_n)\}$. This gives 
\begin{equation}\label{conjugation by mcg}
fT_{\alpha}f^{-1}=fT_{\alpha_1}\cdots T_{\alpha_n}f^{-1}=T_{f(\alpha_1)}\cdots T_{f(\alpha_n)}=T_{f(\alpha)}.
\end{equation}
Thus, we have $$(\gamma \ast \alpha)\ast (\beta\ast \alpha)=T_{\alpha}(\gamma)\ast T_{\alpha}(\beta)=T_{T_{\alpha}(\beta)}(T_{\alpha}(\gamma))=T_{\alpha}T_{\beta}(\gamma)=T_{\beta}(\gamma)\ast \alpha=(\gamma\ast\beta)\ast \alpha,$$
and hence $(\C_g,\ast)$ is a quandle. 
\end{proof}

\begin{cor}
\label{cor:cg_subquandle_of_mcg}
For each $g \ge 1$, the following assertions hold:
\begin{enumerate}
\item $\D_g$ is a subquandle of $\C_g$.
\item $\C_g$ is an infinitely generated subquandle of $\Conj(\mcg_g)$.
\item $\C_g$ has infinitely many connected components.
\end{enumerate}
\end{cor}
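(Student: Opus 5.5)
The three assertions can be handled in order, each using the map $\alpha\mapsto T_\alpha$ and the conjugation identity \eqref{conjugation by mcg} from the proof of Proposition \ref{Cg quandle}.

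For (1), a simple closed curve $\alpha$ has no self-intersections, so its associated multicurve is $\{\alpha\}$ and the element $T_\alpha$ defined for $\C_g$ is the ordinary Dehn twist. Hence the operation on $\C_g$ restricts on $\D_g$ to $\beta\ast\alpha=T_\alpha(\beta)$. It remains to check closure: $T_\alpha$ is a homeomorphism, so it carries simple closed curves to simple closed curves, and ${\bf 0}$ is fixed. Thus $\D_g$ is a subquandle.

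For (2), consider $\Phi:\C_g\to\Conj(\mcg_g)$, $\alpha\mapsto T_\alpha$, with $T_{\bf 0}=1$. By \eqref{conjugation by mcg}, $\Phi(\beta\ast\alpha)=T_{T_\alpha(\beta)}=T_\alpha T_\beta T_\alpha^{-1}=\Phi(\beta)\ast\Phi(\alpha)$, so $\Phi$ is a homomorphism. The harder part is injectivity, which I expect to be the main obstacle.

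\begin{itemize}
\item By construction $T_\alpha$ depends only on the associated multicurve of $\alpha$.
\item For distinct curves to have distinct images, I would use that a product of Dehn twists about the curves of a multiset of pairwise disjoint curves, counted with multiplicity, determines that multiset. This follows from the standard fact that multitwists determine their curves and exponents, e.g.\ by comparing intersection numbers $i(T_\alpha^k(\gamma),\gamma)$ as in Farb--Margalit.
\item The real issue is whether different closed curves can have the same associated multicurve; the resolution of Figure~\ref{fig:resolution_of_nonsimple_curve_to_parallel_copies} suggests they can.
\end{itemize}

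If injectivity fails, I would read ``subquandle'' as: the image $\Phi(\C_g)$ is a subquandle isomorphic to a quotient, or argue that the intended statement concerns this image. Concretely, I would first try to show that the associated multicurve determines $\alpha$. If that fails, I would establish (2) for the image $\Phi(\C_g)$, which is closed under conjugation by the homomorphism property.

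For infinite generation, I would use that the multicurve size $n$ (the number of components counted with multiplicity) is invariant under $\mcg_g$-action. Every element $\beta\ast^{\pm1}\alpha=T_\alpha^{\pm1}(\beta)$ has the same associated-multicurve size as $\beta$, because \eqref{conjugation by mcg} shows $f$ maps associated multicurves componentwise. Hence the subquandle generated by a set $A$ consists only of curves whose size occurs among the elements of $A$. Resolving curves with arbitrarily many self-intersections produces sizes that are unbounded; for example, on a torus, closed curves winding $n$ times around a simple curve resolve to $n$ parallel copies. So no finite $A$ generates $\C_g$.

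For (3), the same invariant separates components. Each $S_\alpha=T_\alpha\in\mcg_g$, so $\Inn(\C_g)$-orbits lie in $\mcg_g$-orbits, and the size of the associated multicurve is constant on orbits. Infinitely many sizes occur, so there are infinitely many connected components.
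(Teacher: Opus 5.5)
Your treatment of (1) and of the homomorphism property in (2) is the same as the paper's. On injectivity your suspicion is correct, and you should commit to it rather than hedge: for $g\ge 2$ the map $\Phi\colon\alpha\mapsto T_\alpha$ is \emph{not} injective.

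\begin{itemize}
\item The twists $T_{\alpha_i}$ about the pairwise disjoint curves of the associated multicurve commute, so $\Phi(\alpha)$ depends only on $\phi(\alpha)$. Hence $\Phi$ factors through the covering $\phi\colon\C_g\to\W_g^+$ of Theorem~\ref{theorem covering}.
\item That theorem exhibits a figure-eight curve $\alpha$ with $\alpha\neq\iota(\alpha)$ and $\phi(\alpha)=\phi(\iota(\alpha))$. Therefore $T_\alpha=T_{\iota(\alpha)}$.
\item The torus picture of Figure~\ref{fig:resolution_of_nonsimple_curve_to_parallel_copies} that you cite is not a witness. On $S_1$, $\phi$ is a bijection, and there $\Phi$ really is an embedding, because $T_a^n$ with $n>0$ determines $a$ and $n$.
\end{itemize}

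The paper's proof just asserts that $\Phi$ is ``a quandle embedding,'' which is inconsistent with its own Theorem~\ref{theorem covering}. So the literal claim in (2) cannot be proved for $g\ge 2$, and your fallback is the correct content. The image $\Phi(\C_g)$ is a subquandle of $\Conj(\mcg_g)$; it equals the image of $\W_g^+$ because $\phi$ is onto. Moreover, $\C_g\to\Phi(\C_g)$ is a quandle covering.

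For infinite generation and infinitely many components, your route differs from the paper's.

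\begin{itemize}
\item The paper uses that $T_{\alpha_1}^n$ and $T_{\alpha_1}^m$ are not conjugate in $\mcg_g$ for $n\neq m$. It combines this, implicitly, with two facts: a subquandle of $\Conj(\mcg_g)$ generated by finitely many elements lies in finitely many conjugacy classes, and the image of each connected component lies in a single conjugacy class.
\item You instead use the size of the associated multicurve, which is an $\mcg_g$-invariant. Curves whose sizes lie in a fixed set then form a subquandle, and each $\Inn(\C_g)$-orbit has constant size.
\end{itemize}

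Your argument is correct and more intrinsic. It stays inside $\C_g$ and needs only two inputs: the equivariance of resolution already used in Proposition~\ref{Cg quandle}, and the existence of curves resolving to $n$ parallel copies. It is also visibly independent of the injectivity question. The paper's argument likewise only needs $\Phi$ to be a homomorphism, so it survives the failure of injectivity too. It does, however, import the non-conjugacy of distinct powers of a Dehn twist from Farb--Margalit.
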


\begin{proof}
For assertion (1), note that $\D_g$ is a subset of $\C_g$ and the quandle operation in $\C_g$ coincides with that of $\D_g$ when restricted to simple closed curves.
\par

For $\alpha\in \C_g$, let $\{\alpha_1,\dots, \alpha_n\}$ be the associated multicurve of $\alpha$. In view of \eqref{conjugation by mcg}, we see that the map $\alpha \mapsto T_{\alpha}=T_{\alpha_1}\cdots T_{\alpha_n}$ gives a quandle embedding $\C_g \to \Conj(\mcg_g)$, where the latter is the conjugation quandle of $\mcg_g$. Thus, we can view $\C_g$ as a subquandle of $\Conj(\mcg_g)$. Finally, let $\alpha \in \C_g$ be a non-trivial curve with associated multicurve $\{\alpha_1,\dots, \alpha_n\}$ such that $\alpha_i=\alpha_j$ for all $i\neq j$. Then $\alpha$ corresponds to $T_{\alpha_1}^n\in \Conj(\mcg_g)$. For $n\neq m$, since $T_{\alpha_1}^n$ is not conjugate to $T_{\alpha_1}^m$ in $\mcg_g$ \cite[Section 3.3]{MR2850125}, we conclude that $\C_g$ is not finitely generated as a quandle. This proves assertion (2).
\par

Note that connected components of a subquandle of $\Conj(\M_g)$ correspond to conjugacy classes in $\M_g$. For a simple closed curve $\alpha$, since $T_{\alpha}^m$ is not conjugate to $T_{\alpha}^m$ for $m\neq n$, it follows that $\C_g$ has infinitely many connected components, which is assertion (3).
\end{proof}

Next, we show that the construction of the quandle $\D_g$ can also be generalised to the set of integral weighted multicurves. Let $\E_g$ be the set of all multicurves on the surface $S_g$ and
$$\W_g=\left\{\sum_{i=1}^r n_i\alpha_i~\Big|~n_i\in \Z, ~ \{\alpha_1, \ldots, \alpha_r \}  \in  \E_g \text{ and } r \ge 1 \right\}$$
 the set of all integral weighted multicurves, which is also a free abelian group on $\E_g$.

\begin{prop}\label{Dg quandle}
For each $g \ge 1$, the set $\W_g$ forms a quandle with respect to the binary operation given by 
$$\left(\sum_{i=1}^r n_i\alpha_i\right)\ast \left(\sum_{j=1}^s m_j\beta_j\right)=\sum_{i=1}^r n_iT_{\beta_1}^{m_1}\cdots T_{\beta_s}^{m_s}(\alpha_i).
$$
\end{prop}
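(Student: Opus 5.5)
The plan is to copy the proof of Proposition \ref{Cg quandle}. For $w=\sum_{j=1}^s m_j\beta_j\in\W_g$ we set $T_w=T_{\beta_1}^{m_1}\cdots T_{\beta_s}^{m_s}\in\mcg_g$. The curves $\beta_j$ in a multicurve are pairwise disjoint, so the Dehn twists $T_{\beta_j}$ commute. Hence $T_w$ does not depend on how the terms are ordered, and the operation is well defined. It is also unchanged if parallel copies are merged, so that $n\beta+m\beta$ is written as $(n+m)\beta$. We also use $T_{\bf 0}=1$, and a curve with coefficient $0$ contributes the identity. The operation then reads $v\ast w=T_w(v)$, where $\mcg_g$ acts on $\W_g$ by $f\left(\sum n_i\alpha_i\right)=\sum n_if(\alpha_i)$. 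This action is well defined because a mapping class sends a multicurve to a multicurve, and it is $\Z$-linear on the free abelian group $\W_g$.

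First, idempotence. For $v=\sum n_i\alpha_i$, each $\alpha_i$ is disjoint from or equal to every $\alpha_k$, so $T_{\alpha_k}(\alpha_i)=\alpha_i$. Therefore $T_v(v)=v$.

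Second, the right translation $S_w(v)=T_w(v)$ is a bijection, with inverse $T_w^{-1}=T_{\beta_1}^{-m_1}\cdots T_{\beta_s}^{-m_s}$, which is again an element of $\mcg_g$ acting on $\W_g$.

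Third, right distributivity. The key identity is the equivariance
\[
fT_wf^{-1}=T_{f(w)}\qquad\text{for all } f\in\mcg_g,\ w\in\W_g.
\]
It follows from $fT_\beta f^{-1}=T_{f(\beta)}$ applied factor by factor, together with the fact that $f(w)=\sum m_jf(\beta_j)$ is again an integral weighted multicurve. With this identity, $(u\ast w)\ast(v\ast w)=T_{T_w(v)}T_w(u)=T_wT_vT_w^{-1}T_w(u)=T_wT_v(u)=(u\ast v)\ast w$.

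The main obstacle is bookkeeping rather than topology. The operation must be independent of the chosen representative of an element of $\W_g$, including the treatment of zero coefficients, repeated isotopic curves and ${\bf 0}$. The action of $\mcg_g$ on formal sums must also respect the free abelian group structure. I would handle this by writing each element uniquely in reduced form, with distinct nonzero curves and nonzero coefficients. I would then check that both $T_w$ and the action are compatible with this reduction; this holds because mapping classes act bijectively on isotopy classes.
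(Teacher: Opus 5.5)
Your proposal is correct and follows essentially the same route as the paper. Idempotence holds because each curve of a multicurve is fixed by the twists along the others. The map $S_w$ is inverted by the inverse multitwist. Right distributivity follows from the conjugation identity $fT_wf^{-1}=T_{f(w)}$, which is the paper's \eqref{conjugation by mcg} applied factor by factor.

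The only cosmetic difference is in the second axiom: you obtain bijectivity of $S_w$ directly from the action of $\mcg_g$, whereas the paper checks existence and uniqueness of the preimage by hand.
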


\begin{proof}
If ${\bf 0}$ denotes the trivial element of the group $\W_g$, then we set $(\textstyle\sum_{i=1}^r n_i\alpha_i) \ast {\bf 0}= \textstyle\sum_{i=1}^r n_i\alpha_i$ and ${\bf 0} \ast (\textstyle\sum_{i=1}^r n_i\alpha_i)= {\bf 0}$ for all $\textstyle\sum_{i=1}^r n_i\alpha_i\in \W_g.$ For the first quandle axiom, we see that $$\left(\sum_{i=1}^r n_i\alpha_i\right)\ast \left(\sum_{j=1}^r n_j\alpha_j\right)=\sum_{i=1}^r n_iT_{\alpha_1}^{n_1}\cdots T_{\alpha_r}^{n_r}(\alpha_i)=\sum_{i=1}^r n_i\alpha_i$$ for each $\textstyle\sum_{i=1}^r n_i\alpha_i\in \W_g$. For $\textstyle\sum_{i=1}^r n_i\alpha_i$ and $\textstyle\sum_{j=1}^s m_j\beta_j$ in $\W_g$, we have $$\left(\sum_{i=1}^r n_i T_{\beta_1}^{-m_1}\cdots T_{\beta_s}^{-m_s}(\alpha_i)\right)\ast \left(\sum _{j=1}^s m_j\beta_j\right)=\sum_{i=1}^rn_i\alpha_i.$$
Furthermore, if $\textstyle\left(\sum_{k=1}^tp_k\gamma_k \right) \ast \textstyle\left(\sum _{j=1}^s m_j\beta_j\right)=\textstyle\sum_{i=1}^rn_i\alpha_i$, then
$$
\sum _{k=1}^t p_kT_{\beta_1}^{m_1}\cdots T_{\beta_s}^{m_s} (\gamma_k)=\sum _{i=1}^r n_i\alpha_i.
$$
This implies that $r=t$. After relabelling, if necessary, we obtain $p_i=n_i$ and $T_{\beta_1}^{m_1}\cdots T_{\beta_s}^{m_s} (\gamma_i)=\alpha_i$ for each $i$. Hence, we have
$$\sum_{k=1}^tp_k\gamma_k=\sum_{i=1}^r n_i T_{\beta_1}^{-m_1}\cdots T_{\beta_s}^{-m_s}(\alpha_i),$$ 
which establishes the second quandle axiom.  Finally, we have
\begin{align*}
&\left(\left(\sum_{k=1}^t p_k\gamma_k\right)\ast\left(\sum_{i=1}^r n_i\alpha_i\right)\right)\ast\left(\left(\sum_{j=1}^s m_j\beta_j\right)\ast \left(\sum_{i=1}^r n_i\alpha_i\right)\right)\\
&=\left(\sum_{k=1}^t p_k T_{\alpha_1}^{n_1}\cdots T_{\alpha_r}^{n_r}(\gamma_k)\right)\ast\left(\sum_{j=1}^s m_jT_{\alpha_1}^{n_1}\cdots T_{\alpha_r}^{n_r}(\beta_j)\right)\\
&=\sum_{k=1}^t p_k T_{\alpha_1}^{n_1}\cdots T_{\alpha_r}^{n_r}T_{\beta_1}^{m_1}\cdots T_{\beta_s}^{m_s}(\gamma_k), \quad \textrm{using \eqref{conjugation by mcg} repeatedly}\\
&=\left(\sum_{k=1}^t p_k T_{\beta_1}^{m_1}\cdots T_{\beta_s}^{m_s}(\gamma_k)\right)\ast \left(\sum_{i=1}^rn_i\alpha_i\right)\\
&=\left(\left(\sum_{k=1}^t p_k\gamma_k\right)\ast\left(\sum_{j=1}^s m_j\beta_j\right)\right)\ast\left(\sum_{i=1}^r n_i\alpha_i\right),
\end{align*}
which is the third quandle axiom.
\end{proof}

\begin{cor}
\label{cor:zeg_subquandle_mcg}
For each $g \ge 1$, the following assertions hold:
\begin{enumerate}
\item $\D_g$ is a subquandle of $\W_g$. 
\item $\W_g$ is an infinitely generated subquandle of $\Conj(\mcg_g)$. 
\item $\W_g$ has infinitely many connected components.
\end{enumerate}
\end{cor}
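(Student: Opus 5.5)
The plan is to follow the proof of Corollary~\ref{cor:cg_subquandle_of_mcg} almost verbatim, with the map $\sum_{i=1}^r n_i\alpha_i \mapsto T_{\alpha_1}^{n_1}\cdots T_{\alpha_r}^{n_r}$ in place of $\alpha\mapsto T_\alpha$.

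For assertion (1), identify a simple closed curve $\alpha\in\D_g$ with the weighted multicurve $1\cdot\alpha\in\W_g$. For $\alpha,\beta\in\D_g$ the operation of Proposition~\ref{Dg quandle} gives $(1\cdot\alpha)\ast(1\cdot\beta)=1\cdot T_\beta(\alpha)$, and $T_\beta(\alpha)$ is again a simple closed curve. So the inclusion is a quandle homomorphism and $\D_g$ is a subquandle.

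For assertion (2), define $\Phi:\W_g\to\Conj(\mcg_g)$ by $\Phi(\sum n_i\alpha_i)=T_{\alpha_1}^{n_1}\cdots T_{\alpha_r}^{n_r}$. This is well defined because the curves of a multicurve are disjoint, so the twists commute and the order of the factors does not matter. Applying \eqref{conjugation by mcg} factorwise gives $f\,\Phi(w)\,f^{-1}=\Phi(f(w))$ for $f\in\mcg_g$. With $f=\Phi(v)$, and noting that the quandle operation is exactly $w\ast v=\Phi(v)(w)$, this yields $\Phi(w\ast v)=\Phi(v)\Phi(w)\Phi(v)^{-1}$, so $\Phi$ is a homomorphism.

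The main obstacle is injectivity of $\Phi$: the identity $T_{\alpha_1}^{n_1}\cdots T_{\alpha_r}^{n_r}=T_{\beta_1}^{m_1}\cdots T_{\beta_s}^{m_s}$ must force equality of the weighted multicurves. First normalise a weighted multicurve by merging isotopic components, discarding $\textbf{0}$ and discarding zero weights; $\Phi$ does not see these changes, so the claim is up to this identification. Then I would invoke the standard fact about multitwists (Farb--Margalit, \cite{MR2850125}): the canonical reduction system of a nontrivial multitwist $\prod T_{\alpha_i}^{n_i}$, with the $\alpha_i$ essential, pairwise disjoint and non-isotopic and all $n_i\ne0$, is exactly $\{\alpha_i\}$, and the exponents are recovered from the action on a curve $\gamma$ meeting only $\alpha_i$, via the growth of $i(\gamma,\cdot)$ under powers. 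An alternative is to compare intersection numbers using $i(T_\alpha^n(\gamma),\gamma)=|n|\,i(\alpha,\gamma)^2$.

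For infinite generation, take a nonseparating curve $\alpha$ and consider the elements $n\alpha$, which map to $T_\alpha^n$. Since $T_\alpha^n$ and $T_\alpha^m$ are not conjugate for $n\ne m$ \cite[Section 3.3]{MR2850125}, the elements $n\alpha$ lie in distinct conjugacy classes, hence in distinct connected components of $\W_g$, because components of a subquandle of $\Conj(\mcg_g)$ sit inside conjugacy classes. This gives assertion (3).

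It also gives infinite generation. The orbit of a single element $w$ under $\Inn(\W_g)$ lies in one conjugacy class. A finitely generated quandle has at most finitely many components, since every element is obtained from a generator by applying inner automorphisms built from the generators. Having infinitely many components therefore rules out finite generation, which completes assertion (2).
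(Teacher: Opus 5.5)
Your proposal is correct and follows the same route as the paper. Both use the map $\sum n_i\alpha_i\mapsto T_{\alpha_1}^{n_1}\cdots T_{\alpha_r}^{n_r}$ into $\Conj(\mcg_g)$ and the non-conjugacy of $T_\alpha^n$ and $T_\alpha^m$ for $n\neq m$ to get infinitely many components, hence non-finite generation. You are more careful than the paper, which simply asserts this map is an embedding; the uniqueness of multitwist expressions you need for injectivity is exactly \cite[Lemma 3.17]{MR2850125}, which the paper itself uses later in Lemma~\ref{lem:vanishing_morphism_multitwist}.
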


\begin{proof}
Note that each element of $\D_g$ can be considered as an element of $\W_g$. Since the quandle operation on the two sets is identical, $\D_g$ is a subquandle of $\W_g$. Further, the map $\W_g \to \Conj(\mcg_g)$ given by $\textstyle\sum_{i=1}^r n_i\alpha_i \mapsto T_{\alpha_1}^{n_1}\cdots T_{\alpha_r}^{n_r}$ is an embedding of quandles. Hence, $\W_g$ can be regarded as a subquandle of $\Conj(\mcg_g)$. For a simple closed curve $\alpha$, since $T_{\alpha}^n$ is not conjugate to $T_{\alpha}^m$ for $n\neq m$ \cite[Section 3.3]{MR2850125}, we conclude that $\W_g$ is not finitely generated as a quandle and has infinitely many connected components.

\end{proof}

We now relate the preceding two constructions through the idea of a quandle covering, introduced by Eisermann \cite{MR1954330, MR3205568}. A quandle homomorphism $p: X  \to Y$ is called a {\it quandle covering} if it is surjective and $S_x=S_{x'}$ whenever $p(x) = p(x')$ for any $x, x' \in X$.

\begin{example}
Some basic examples of quandle coverings are:
\begin{enumerate}
\item A surjective group homomorphism $p: G \to H$ yields a quandle covering $\Conj(G) \to \Conj(H)$ if and only if $\ker(p)$ is a central subgroup of $G$.
\item Let $X$ be a quandle and $F$ a non-empty set viewed as a trivial quandle. Consider $X \times F$ with the product quandle structure $(x,s) * (y,t) = (x *y,s)$. Then the projection $p: X \times F \to X$ given by $(x,s) \to x$ is a quandle covering.
\end{enumerate}
\end{example}

Consider the subquandle
$$
\W_g^+=\left\{ \sum_{i=1}^r n_i\alpha_i ~\Big| ~n_i>0, ~\{\alpha_1, \dots,\alpha_r\}\in \E_g \text{ and } r\geq 1 \right\}
$$
of $\W_g$ consisting of positively weighted multicurves. Note that, since $\textbf{0}\in \E_g$, we have $\textbf{0}\in \W_g^+$. The proof of Corollary \ref{cor:zeg_subquandle_mcg} also yields the following result.

\begin{cor} \label{cor:positive_multicurve_components}
For each $g \ge 1$, the following assertions hold:
\begin{enumerate}
\item $\D_g$ is a subquandle of $\W_g^+$. 
\item $\W_g^+$ is an infinitely generated subquandle of $\Conj(\mcg_g)$.
\item $\W_g^+$ has infinitely many connected components.
\end{enumerate}
\end{cor}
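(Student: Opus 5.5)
The plan is to reuse the argument of Corollary~\ref{cor:zeg_subquandle_mcg}, after checking that $\W_g^+$ really is a subquandle of $\W_g$. For closure, take $\sum n_i\alpha_i$ and $\sum m_j\beta_j$ in $\W_g^+$. Their product is $\sum n_i f(\alpha_i)$ with $f=T_{\beta_1}^{m_1}\cdots T_{\beta_s}^{m_s}\in\mcg_g$.

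Since $f$ is a homeomorphism, $\{f(\alpha_1),\dots,f(\alpha_r)\}$ is again a multicurve, and the weights $n_i$ stay positive. For the inverse operation, $\sum n_i f^{-1}(\alpha_i)$ also lies in $\W_g^+$ by the same reasoning. Hence $S_x$ restricts to a bijection of $\W_g^+$ for every $x\in\W_g^+$, and $\W_g^+$ is a subquandle.

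For assertion (1), each simple closed curve $\alpha$, including $\textbf{0}$, is the element $1\cdot\alpha\in\W_g^+$. The operations agree, since $1\cdot\alpha \ast 1\cdot\beta = T_\beta(\alpha)$. So $\D_g\subset\W_g^+$ as a subquandle.

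For assertion (2), restrict the embedding $\W_g\to\Conj(\mcg_g)$, $\sum n_i\alpha_i\mapsto T_{\alpha_1}^{n_1}\cdots T_{\alpha_r}^{n_r}$, to $\W_g^+$. This gives an embedding of $\W_g^+$ into $\Conj(\mcg_g)$.

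Fix an essential simple closed curve $\alpha$. Then $n\alpha\in\W_g^+$ for every $n\ge1$, and it maps to $T_\alpha^n$. By \cite[Section 3.3]{MR2850125}, $T_\alpha^n$ and $T_\alpha^m$ are not conjugate in $\mcg_g$ for $n\ne m$.

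The key observation is that the connected component of an element $x$ of a subquandle $Q\subseteq\Conj(\mcg_g)$ is contained in the $\mcg_g$-conjugacy class of $x$. Indeed, $\Inn(Q)$ acts by conjugation by elements of the subgroup generated by $Q$. Hence the elements $n\alpha$, $n\ge1$, lie in pairwise distinct components of $\W_g^+$, which gives assertion (3).

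For infinite generation, suppose a finite set $A$ generated $\W_g^+$. Every element of $\W_g^+$ is obtained from some $a\in A$ by iterating $S_x^{\pm1}$, so it lies in the component of that $a$. Thus there would be at most $|A|$ components, contradicting (3).

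The only step needing any care is the closure check. The point there is that a homeomorphism sends a collection of pairwise disjoint, pairwise non-isotopic curves to another such collection, so the image is again a multicurve with the same positive weights. Everything else transfers verbatim from the proof of Corollary~\ref{cor:zeg_subquandle_mcg}.
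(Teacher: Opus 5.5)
Your proposal is correct and follows the paper's route: the paper simply says that the proof of Corollary~\ref{cor:zeg_subquandle_mcg} carries over, namely the embedding into $\Conj(\mcg_g)$ together with the non-conjugacy of $T_\alpha^n$ and $T_\alpha^m$ for $n\neq m$. You also spell out two points the paper leaves implicit: the closure of $\W_g^+$ under $\ast$ and $\ast^{-1}$, and the fact that a quandle generated by a finite set $A$ has at most $|A|$ connected components, which is what turns assertion (3) into the non-finite-generation claim in (2).
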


\begin{theorem}\label{theorem covering}
For each $g \ge 1$, the map $\phi:\C_g\to \W_g^+$ given by $\phi(\alpha)=\textstyle\sum_{i=1}^r \alpha_i$, where $\{\alpha_1,\dots, \alpha_r\}$ is the associated multicurve of $\alpha$, is a quandle covering. Further, $\phi|_{\D_g}:\D_g\to \D_g$ is the identity map for each $g \ge 1$, $\phi$ is not injective for $g \ge 2$, and it is an isomorphism for $g=1$.
\end{theorem}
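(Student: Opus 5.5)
The plan is to read everything through the embeddings into $\Conj(\mcg_g)$ from Corollaries \ref{cor:cg_subquandle_of_mcg} and \ref{cor:positive_multicurve_components}. In $\C_g$ the inner automorphism $S_\alpha$ is $T_\alpha=T_{\alpha_1}\cdots T_{\alpha_n}$. In $\W_g^+$, an element $\sum n_i\beta_i$ with distinct $\beta_i$ acts by $T_{\beta_1}^{n_1}\cdots T_{\beta_r}^{n_r}$.

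First I will check that $\phi$ is well defined as a map into $\W_g^+$. The curves in the associated multicurve of $\alpha$ come from resolving all self-intersections, so they are pairwise disjoint simple closed curves. Collecting isotopic copies, $\phi(\alpha)=\sum_i \alpha_i$ is a positively weighted multicurve with weights equal to multiplicities. Since disjoint curves give commuting twists, $T_\alpha$ equals the mapping class attached to $\phi(\alpha)$. I also set $\phi(\mathbf{0})=\mathbf{0}$.

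Next, the homomorphism property. By \eqref{conjugation by mcg}, the associated multicurve of $f(\beta)$ is $\{f(\beta_j)\}$ for $f\in\mcg_g$, so $\phi(f(\beta))=f(\phi(\beta))$ with $f$ acting termwise. Taking $f=T_\alpha$ gives
\[\phi(\beta\ast\alpha)=T_\alpha(\phi(\beta))=\phi(\beta)\ast\phi(\alpha).\]
The last equality uses that $T_\alpha$ coincides with the twist product of $\phi(\alpha)$. The covering condition is then immediate: if $\phi(\alpha)=\phi(\alpha')$, then $S_\alpha=T_\alpha=S_{\phi(\alpha)}=T_{\alpha'}=S_{\alpha'}$ as maps on $\C_g$.

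The main obstacle is surjectivity. Given distinct disjoint simple closed curves $\beta_1,\dots,\beta_r$ with weights $n_i>0$, I must build one closed curve $\alpha$ whose resolution yields exactly $n_i$ parallel copies of each $\beta_i$. I would do this in two steps.
\begin{enumerate}
\item Take $n_i$ parallel copies of each $\beta_i$.
\item Join all these copies into one connected curve by small \enquote{figure-eight} crossings. Each joining crossing is placed in a neighbourhood of an arc connecting two components, with the arcs chosen along a spanning tree. The crossing is arranged so that, with an orientation, the oriented resolution of Figure~\ref{fig:independence_of_resolution_on_the_orientation} splits it back into the two original components.
\end{enumerate}
The arcs can be drawn in the complement of the multicurve, which is possible since $S_g$ is connected. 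The resulting curve then resolves to the given multiset of curves, so $\phi$ is onto. Figure~\ref{fig:resolution_of_nonsimple_curve_to_parallel_copies} is the model case of joining two parallel copies. The point to verify carefully is that the orientations along the spanning tree can be chosen consistently so that every crossing resolves the right way. Since each joining crossing only sees two strands locally, I expect this is handled by choosing each local crossing after orienting.

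Finally, the remaining claims.
\begin{itemize}
\item For a simple closed curve $\alpha$ the associated multicurve is $\{\alpha\}$, so $\phi|_{\D_g}$ is the identity.
\item For $g\ge 2$, take two disjoint non-isotopic non-separating curves $\beta_1,\beta_2$. Joining them along two different, non-homotopic arcs (they exist in genus $\ge2$, e.g.\ arcs differing by a loop around a handle in the complement) gives closed curves in distinct free homotopy classes, which I would distinguish by homology or intersection data. Both map to $\beta_1+\beta_2$, so $\phi$ is not injective.
\item For $g=1$, any multicurve is $n$ copies of a single slope $\gamma$ (or $\mathbf{0}$). Closed curves on the torus up to free homotopy are $\pi_1$-classes up to sign, namely $\pm(np,nq)$, and these resolve to $n$ copies of $\gamma=(p,q)$. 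So $\phi$ is a bijection, and hence an isomorphism.
\end{itemize}
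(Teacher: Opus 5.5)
Your argument is correct. For the homomorphism property, the covering condition, surjectivity, $\phi|_{\D_g}=\mathrm{id}$ and the case $g=1$ it coincides with the paper's proof: equivariance of the resolution under $\mcg_g$, equality of multisets forcing $T_\alpha=T_{\alpha'}$, and a one-crossing band construction. The paper simply says to ``repeat'' that construction, and your spanning tree makes the repetition explicit. The orientation issue you flag is not a real issue. The oriented smoothing at a self-crossing of a single closed curve always splits it into two closed curves. So if you prune leaves of the tree one at a time, every joining crossing resolves into exactly the two pieces it joined, however it was drawn.

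The real difference is the witness for non-injectivity. The paper uses a figure-eight curve $\alpha$ and $\iota(\alpha)$, with $\iota$ the hyperelliptic involution. This is quick to state. However, $\iota$ acts as $-\mathrm{id}$ on $H_1(S_g;\Z)$, so the two curves have the same unoriented homology class, and their non-isotopy has to be read off the figure or from finer invariants.

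Your construction can be certified by homology alone, but only if you choose the arcs correctly. Arcs ``differing by a loop around a handle'' may well give the same class. Instead:
\begin{enumerate}
\item Take $\beta_1,\beta_2$ with $\beta_1\cup\beta_2$ non-separating. This hypothesis matters: for $g\ge 3$, disjoint non-isotopic non-separating curves can be homologous, and then the arcs below need not both exist.
\item Orient $\beta_1,\beta_2$, and let $\tau,\tau'$ both leave $\beta_1$ from its left side, with $\tau$ reaching $\beta_2$ from the left and $\tau'$ from the right.
\item The homology class of a joined curve is the sum of the classes of its resolved components with induced orientations.
\item Changing the side from which the arc reaches $\beta_2$ reverses the induced orientation of $\beta_2$ relative to $\beta_1$.
\end{enumerate}
Hence the two curves carry the classes $\pm([\beta_1]+[\beta_2])$ and $\pm([\beta_1]-[\beta_2])$. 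These are different since $[\beta_1],[\beta_2]\neq 0$, so the curves are not freely homotopic.
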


\begin{proof}
For $\alpha,\beta\in \C_g$, let $\{\alpha_1,\dots,\alpha_r\}$ and $\{\beta_1,\dots,\beta_s\}$ be the associated multicurves of $\alpha$ and $\beta$, respectively. Then we have
$$\phi(\beta \ast \alpha)=\phi \left(T_{\alpha_1}\cdots T_{\alpha_r}(\beta) \right)=\sum_{j=1}^s T_{\alpha_1}\cdots T_{\alpha_r}(\beta_j)=\left(\sum_{j=1}^s \beta_j\right)\ast \left(\sum_{i=1}^r\alpha_i\right)=\phi(\beta)\ast \phi(\alpha),$$ and $\phi$ is a quandle homomorphism. 
\par

Let $\textstyle\sum_{i=1}^r n_i\alpha_i\in \W_g^+$ be a weighted multicurve. Given any disjoint simple closed curves $\alpha_i$ and $\alpha_j$, we can construct a non-simple closed curve with one self-intersection such that the resolution of this self-intersection gives back $\alpha_i$ and $\alpha_j$. See Figure~\ref{fig:band_connected_sum} for the construction. In fact, by taking two disjoint copies of a simple closed curve $\alpha_i$, the same construction yields a non-simple closed curve with one self-intersection such that the resolution of this self-intersection yields two copies of $\alpha_i$. Thus, given $\textstyle\sum_{i=1}^s n_i\alpha_i$, by repeating the construction, we obtain a non-simple curve $\alpha$ such that $\phi(\alpha)=\textstyle\sum_{i=1}^s n_i\alpha_i$. Hence, $\phi$ is a surjection onto $\W_g^+$.
\begin{figure}
\centering
\includegraphics[scale=2]{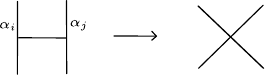}
\caption{Connecting two simple closed curves $\alpha_i$ and $\alpha_j$ to construct a non-simple closed curve whose resolution gives back $\alpha_i$ and $\alpha_j$.}
\label{fig:band_connected_sum}
\end{figure}
\par

Let $\alpha,\beta \in \C_g$ with associated multicurves $\{\alpha_1,\dots,\alpha_r\}$ and $\{\beta_1,\dots,\beta_s\}$, respectively.
If $\phi(\alpha)=\phi(\beta)$, then we have $\textstyle\sum_{i=1}^r \alpha_i=\textstyle\sum_{j=1}^s \beta_j$, which implies that $r=s$ and $\{\alpha_1,\dots,\alpha_r\}=\{\beta_1,\dots,\beta_s\}$. This gives $T_{\alpha}=T_{\alpha_1}\cdots T_{\alpha_r}=T_{\beta_1}\cdots T_{\beta_s}=T_{\beta}$. Hence, $\phi$ is a quandle covering onto $\W_g^+$.
\par

By definition, $\phi|_{\D_g}:\D_g\to \D_g$ is the identity map for each $g \ge 1$. Suppose that $g \ge 2$ and let $\iota$ be the hyperelliptic involution on $S_g$. If $\alpha$ is the figure eight curve on $S_g$ as shown in Figure~\ref{fig:two-nonisotopic_copies_of_figure_eight_curve}, then $\alpha \neq \iota(\alpha)$, but $\phi(\alpha)=\phi(\iota(\alpha))$. Hence, $\phi$ is not injective for $g \ge 2$. 
\begin{figure}
\centering
\includegraphics[scale=1.3]{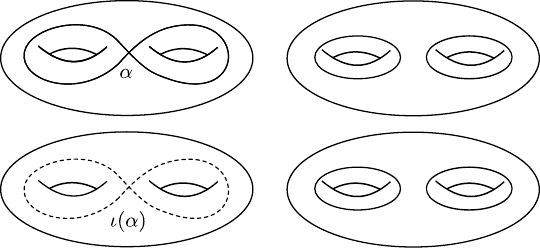}
\caption{Two non-isotopic curves with identical associated multicurves.}
\label{fig:two-nonisotopic_copies_of_figure_eight_curve}
\end{figure}
For the last assertion, note that any two essential simple closed curves on the torus intersect. Further, the isotopy class of an unoriented closed curve on the torus corresponds to $[p,q]\in (\Z \times \Z)/\{\pm 1\}$, while the isotopy class of an unoriented essential simple closed curve corresponds to $[p,q]\in (\Z \times \Z)/\{\pm 1\}$ with either $\gcd(p,q)=1$, or $(p,q)=(1,0)$, or $(p,q)=(0,1)$~\cite[Proposition 1.5]{MR2850125}. For any $[0,0]\neq [p,q]\in  (\Z \times \Z)/\{\pm 1\}$, we can write $ (p,q)=\gcd(p,q) (p',q')$, where $\gcd(p',q')=1$. Thus, the map $\phi$ sends the $[p,q]$ curve to $\gcd(p,q)[p',q']$, and it is an isomorphism in the case of the torus.
\end{proof}
\bigskip

Consider the subquandle
$$
\W_g^1=\left\{\sum_{i=1}^r \alpha_i ~\Big|~ \{\alpha_1,\ldots,\alpha_r\} \in \E_g \text{ and } r\ge 1\right\}
$$
of $\W_g$ consisting of weight one multicurves. Obviously, $\D_g$ is a subquandle of $\W_g^1$ for each $g\geq 1$. Though the quandle $\W_g$ is not finitely generated, we will see that $\W_g^1$ is  finitely generated. To prove this, we need a description of their inner automorphism groups.

\begin{prop} \label{prop:inner_automorphisms_of_quandles}
If $g\geq 1$ and $Z(\mcg_g)$ denotes the center of the mapping class group $\mcg_g$, then 
$$\Inn(\D_g^{ns}) \cong \Inn(\D_g) \cong \Inn(\C_g) \cong \Inn(\W_g) \cong \Inn(\W_g^1)  \cong \Inn(\W_g^+) \cong \mcg_g/Z(\mcg_g).$$
\end{prop}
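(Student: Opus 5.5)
The plan is to treat all six quandles uniformly, as subquandles of $\Conj(\mcg_g)$ via the embeddings of Corollaries \ref{cor:cg_subquandle_of_mcg} and \ref{cor:zeg_subquandle_mcg}. Let $X$ be any of $\D_g^{ns},\D_g,\C_g,\W_g,\W_g^1,\W_g^+$. Each inner automorphism $S_x$ is the restriction to $X$ of the action of the mapping class $T_x$: here $T_x=T_{\alpha_1}^{n_1}\cdots T_{\alpha_r}^{n_r}$ for a weighted multicurve, and $T_x=T_{\alpha_1}\cdots T_{\alpha_n}$ for an element of $\C_g$. Consider the subgroup $G_X=\langle T_x \mid x\in X\rangle\le \mcg_g$. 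Every element of $X$ involves only Dehn twists, and $X\supseteq \D_g^{ns}$. Moreover, $\mcg_g$ is generated by twists along non-separating curves (Humphries/Lickorish generators). Hence $G_X=\mcg_g$ in every case. Since $\mcg_g$ acts on $X$ (the action $f\cdot x$ being $f(x)$, i.e.\ $f$ applied to each curve, which is well defined and preserves $X$ by \eqref{conjugation by mcg}), we get a surjective homomorphism $\rho_X:\mcg_g\to \Inn(X)$ with $\rho_X(T_x)=S_x$. Surjectivity follows because the $S_x$ generate $\Inn(X)$, and $\rho_X$ is a homomorphism because it is the restriction of a group action.

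It then remains to show $\ker\rho_X=Z(\mcg_g)$ for each $X$. The inclusion $Z(\mcg_g)\subseteq\ker\rho_X$ holds because a central $f$ satisfies $fT_\alpha f^{-1}=T_\alpha$, hence $T_{f(\alpha)}=T_\alpha$, hence $f(\alpha)=\alpha$ for every essential simple closed curve. This uses the standard fact that $T_\alpha=T_\beta$ implies $\alpha=\beta$, which is valid for $g\ge1$ including the torus. Consequently $f$ fixes every curve, every closed curve and every multicurve, up to isotopy. (For $g\ge3$ the center is trivial; for $g=1,2$ it is generated by the hyperelliptic involution, which fixes all isotopy classes of simple closed curves. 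For non-simple curves in $\C_g$ I would check separately that the involution also fixes them.)

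For the reverse inclusion, since $\D_g^{ns}\subseteq X$, any $f\in\ker\rho_X$ fixes every non-separating simple closed curve. Then $fT_\alpha f^{-1}=T_{f(\alpha)}=T_\alpha$ for all non-separating $\alpha$, and these twists generate $\mcg_g$, so $f$ is central. Thus $\Inn(X)\cong\mcg_g/Z(\mcg_g)$ for every $X$ in the list, giving all the isomorphisms at once.

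The main obstacle is the central-element step for $\C_g$ in genus $1$ and $2$. There I must verify that the hyperelliptic involution $\iota$ acts trivially on $\C_g$. This is subtle, because Theorem \ref{theorem covering} shows that $\iota(\alpha)\neq\alpha$ for the figure-eight curve. My fallback is to note that for $\C_g$ we only need $\rho$ to factor through the action on curves as seen by $S$. The point is that $S_x$ is determined by $T_x$, and $T_{\iota(\alpha)}=\iota T_\alpha\iota^{-1}=T_\alpha$ since $\iota$ is central. So $S_{\iota(\alpha)}=S_\alpha$. However, $\iota$ might still act nontrivially on the set $\C_g$, in which case $\rho_{\C_g}(\iota)\neq 1$ and the kernel is smaller than $Z(\mcg_g)$.

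If that happens, I would instead define $\Inn(\C_g)$ via its image in $\Inn(\W_g^+)$ using the covering $\phi$. For a covering, the natural map $\Inn(\C_g)\to\Inn(\W_g^+)$ is surjective, and one can then check its kernel directly. I expect this genus-$\le2$ verification for $\C_g$ to require the most care, and I would settle it by examining whether $\iota$ fixes every element of $\C_g$ before committing to either route.
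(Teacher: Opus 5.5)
Your reduction, $\Inn(X)=\rho_X(\mcg_g)\cong\mcg_g/\ker\rho_X$ with $\ker\rho_X\subseteq Z(\mcg_g)$ because $\D_g^{ns}\subseteq X$ and non-separating twists generate, is correct, and it is more careful than the paper's route. It proves the claim for $\D_g^{ns},\D_g,\W_g,\W_g^1,\W_g^+$ for every $g$, since central elements fix every simple closed curve and hence every weighted multicurve. It also proves the claim for $\C_g$ when $g\geq 3$ (trivial center) and when $g=1$ (there $\phi\colon\C_1\to\W_1^+$ is an isomorphism, or directly $\iota=-I$ fixes every class $[p,q]$).

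\textbf{The gap.} The step you flagged, $Z(\mcg_2)\subseteq\ker\rho_{\C_2}$, cannot be filled because it is false, and the fact you quoted already decides it. The hyperelliptic involution $\iota$ is central in $\mcg_2$, and the proof of Theorem~\ref{theorem covering} gives $\alpha\in\C_2$ with $\iota(\alpha)\neq\alpha$. Concretely, $\iota$ preserves a pants decomposition of $S_2$ by three non-separating curves while swapping the two pairs of pants. So it sends a figure-eight around two of these curves in one pair of pants to the corresponding figure-eight in the other, and these are distinct closed geodesics.

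Hence $\rho_{\C_2}(\iota)\neq 1$. Since $\ker\rho_{\C_2}\subseteq Z(\mcg_2)=\langle\iota\rangle$, the kernel is trivial and $\Inn(\C_2)\cong\mcg_2$. This is not isomorphic to $\mcg_2/Z(\mcg_2)$: the former has center of order two, while the latter is centerless. Indeed, if $f$ is central modulo $\iota$, then $T_{f(\beta)}=fT_\beta f^{-1}\in\{T_\beta,T_\beta\iota\}$ for every curve $\beta$. But $T_\beta\iota$ is not a Dehn twist, since it acts on $H_1(S_2;\Z)$ as minus a transvection. So $f$ commutes with all twists and $f\in\langle\iota\rangle$. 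The identity $S_{\iota(\alpha)}=S_\alpha$ you note is beside the point; what matters is whether $\iota$ moves points of $\C_2$, and it does.

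\textbf{The fallback and the paper's proof.} Your fallback cannot rescue this, because $\Inn(\C_2)$ is defined intrinsically. The surjection $\Inn(\C_2)\to\Inn(\W_2^+)$ induced by the covering $\phi$ has kernel exactly $\langle\rho_{\C_2}(\iota)\rangle\cong\Z/2$, which is what checking its kernel directly would reveal.

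The paper reaches the stated conclusion only through Corollary~\ref{cor:cg_subquandle_of_mcg}(2), which asserts that $\alpha\mapsto T_\alpha$ embeds $\C_g$ into $\Conj(\mcg_g)$, and then applies the cited result for subquandles of $\Conj(G)$ that generate $G$. That map factors through $\phi\colon\C_g\to\W_g^+$, which is not injective for $g\geq 2$ by Theorem~\ref{theorem covering}. So $\C_g$ is not a subquandle of $\Conj(\mcg_g)$ and the criterion does not apply; this is harmless for $g\geq 3$ only because $Z(\mcg_g)=1$.

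Your argument, completed, proves the statement for every quandle listed except $\C_2$. There the correct value is $\Inn(\C_2)\cong\mcg_2$, and the statement as written cannot be proved.
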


\begin{proof}
Let $X= \D_g^{ns}, \D_g, \C_g, \W_g, \W_g^1$ or $\W_g^+$. By Corollaries~\ref{cor:cg_subquandle_of_mcg} and \ref{cor:zeg_subquandle_mcg}, we know that $\D_g^{ns}$ is a subquandle of $X$, which itself is a subquandle of $\Conj(\mcg_g)$. It is known that Dehn twists along elements of $\D_g^{ns}$ generate the mapping class group $\mcg_g$ \cite[Theorem 4.1]{MR2850125}. It follows that $X$ also generates $\mcg_g$. Thus, by~\cite[Proposition 3.21]{MR4669143}, we obtain $\Inn(X) \cong \mcg_g/Z(\mcg_g)$.
\end{proof}

The automorphism groups of $\D_g^{ns}$ and $\D_g$ are known due to~\cite[Theorem 6.3]{MR4669143}. This, together with the preceding result, naturally leads to the following problem.

\begin{prob}
Determine the automorphism groups of the quandles $\C_g$, $\W_g$, $\W_g^1$ and $\W_g^+$ for each $g \ge 1$.
\end{prob}

It is known from \cite[Proposition 6.5]{MR4669143} that $\D_g$ is a complete invariant of $S_g$ for $g\geq 3$. A similar result holds for the newly defined quandles.

\begin{prop}\label{classifying surfaces}
For $g,g'\geq 3$, the following statements hold.
\begin{enumerate}
\item $S_g$ is homeomorphic to $S_{g'}$ if and only if $\C_g \cong \C_{g'}$.
\item $S_g$ is homeomorphic to $S_{g'}$ if and only if $\W_g \cong \W_{g'}$.
\item $S_g$ is homeomorphic to $S_{g'}$ if and only if $\W_g^1 \cong\W_{g'}^1$.
\item $S_g$ is homeomorphic to $S_{g'}$ if and only if $\W_g^+ \cong \W_{g'}^+$.
\end{enumerate}
\end{prop}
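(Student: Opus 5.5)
The plan is to use Proposition \ref{prop:inner_automorphisms_of_quandles} to reduce all four statements to a statement about mapping class groups.

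The forward direction in each case is immediate. A homeomorphism $h\colon S_g\to S_{g'}$ carries closed curves to closed curves and multicurves to multicurves. It preserves isotopy classes, and it commutes with the resolution of self-intersection points, since resolution is a local operation that does not depend on orientation. It also satisfies $hT_\alpha h^{-1}=T_{h(\alpha)}$; the paper already checked this for mapping classes in \eqref{conjugation by mcg}, and the same argument works up to orientation for an orientation-reversing $h$, where twists become inverse twists. If $h$ reverses orientation, I will first compose it with a fixed orientation-reversing self-homeomorphism of $S_{g'}$. The induced map $\alpha\mapsto h(\alpha)$ on $\C_g$, and $\sum n_i\alpha_i\mapsto\sum n_ih(\alpha_i)$ on $\W_g$, $\W_g^1$ and $\W_g^+$, is then a bijection that preserves the operation $\beta\ast\alpha=T_\alpha(\beta)$. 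So it is a quandle isomorphism. In fact $S_g\cong S_{g'}$ simply means $g=g'$, so one may take $h$ to be the identity and this direction is trivial.

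For the converse, I will argue through inner automorphism groups. A quandle isomorphism $X\to Y$ conjugates $S_x$ to $S_{f(x)}$, so it induces a group isomorphism $\Inn(X)\cong\Inn(Y)$. Proposition \ref{prop:inner_automorphisms_of_quandles} then shows that in each of the four cases, an isomorphism of the quandles gives
$$\mcg_g/Z(\mcg_g)\cong \mcg_{g'}/Z(\mcg_{g'}).$$
For $g\ge 3$ the center $Z(\mcg_g)$ is trivial, since the hyperelliptic involution is central only in genus $1$ and $2$. This gives $\mcg_g\cong\mcg_{g'}$.

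The remaining task is to recover $g$ from the isomorphism type of $\mcg_g$. I plan to follow the argument behind \cite[Proposition 6.5]{MR4669143}, which settled the case of $\D_g$. One option is abelianization, but it vanishes for $g\ge3$, so it is useless here. A better invariant is the maximal rank of a free abelian subgroup, which equals $3g-3$ by Birman–Lubotzky–McCarthy. Another is the virtual cohomological dimension, $4g-5$ by Harer. Either invariant determines $g$, so we get $g=g'$ and hence $S_g\cong S_{g'}$.

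The main obstacle is this last step, distinguishing mapping class groups of different genera by a group-theoretic invariant. I will cite one of the invariants above, as the proof of \cite[Proposition 6.5]{MR4669143} presumably does. The hypothesis $g,g'\ge3$ is needed only so that the centers vanish and the inner automorphism group is the mapping class group itself.
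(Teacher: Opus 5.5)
Your proposal is correct and follows the paper's proof: a quandle isomorphism induces an isomorphism of inner automorphism groups, and Proposition~\ref{prop:inner_automorphisms_of_quandles}, together with $Z(\mcg_g)=1$ for $g\ge 3$, gives $\mcg_g\cong\mcg_{g'}$, from which $g=g'$ follows. The only difference is the last step: the paper cites the rigidity result \cite[Theorem A.1]{MR2805069}, whereas you recover the genus from an explicit invariant (the maximal rank $3g-3$ of a free abelian subgroup, or the virtual cohomological dimension $4g-5$), which works equally well.
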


\begin{proof}
The forward implication in each case is obvious. For the converse, let $X= \C_g, \W_g, \W_g^1$ or $\W_{g}^+$and $X'= \C_{g'}, \W_{g'}, \W_{g'}^1$ or $\W_{g'}^+$ respectively, such that $X \cong X'$. Since $g, g' \ge 3$, it follows from \cite[Section 3.4]{MR2850125} and Proposition \ref{prop:inner_automorphisms_of_quandles} that 
$$\mcg_g \cong \Inn(X)\cong \Inn(X') \cong \mcg_{g'}.$$ Now, by \cite[Theorem A.1]{MR2805069}, we deduce that $S_g$ is homeomorphic to $S_{g'}$. 
\end{proof}

\begin{lem}\label{lem Eg Wg1 iso}
The map $\E_g \to \W_g^1$, given by $\{\alpha_1, \ldots, \alpha_r  \} \mapsto \textstyle\sum_{i=1}^r \alpha_i$, is an isomorphism of quandles.
\end{lem}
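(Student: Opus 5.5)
Here $\E_g$ has to be read as a quandle. The natural structure is the one it inherits as a subset of $\Conj(\mcg_g)$ via $\{\alpha_1,\dots,\alpha_r\}\mapsto T_{\alpha_1}\cdots T_{\alpha_r}$, namely
$$\{\alpha_1,\dots,\alpha_r\}\ast\{\beta_1,\dots,\beta_s\}=\{T_{\beta_1}\cdots T_{\beta_s}(\alpha_1),\dots,T_{\beta_1}\cdots T_{\beta_s}(\alpha_r)\}.$$
This is well defined for two reasons. The curves $\beta_j$ are pairwise disjoint, so their twists commute and the product does not depend on the ordering. A mapping class sends pairwise disjoint simple closed curves to pairwise disjoint ones, so the image is again a multicurve. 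The quandle axioms then either follow directly or can be read off from the argument below.

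The map $\psi:\{\alpha_1,\dots,\alpha_r\}\mapsto\sum_{i=1}^r\alpha_i$ is the restriction of the identification of $\W_g$ with the free abelian group on $\E_g$, read at weight one.

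\textbf{Bijectivity.} A multicurve here is a collection of pairwise disjoint, pairwise non-isotopic simple closed curves. Its image $\sum\alpha_i$ in the free abelian group therefore records exactly the set $\{\alpha_1,\dots,\alpha_r\}$, so $\psi$ is injective. The map is surjective by the very definition of $\W_g^1$.

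\textbf{Homomorphism.} Set $f=T_{\beta_1}\cdots T_{\beta_s}$. On one side,
$$\psi(\{\alpha_i\}\ast\{\beta_j\})=\sum_i f(\alpha_i).$$
On the other side, by the formula of Proposition~\ref{Dg quandle} with all weights $n_i=m_j=1$,
$$\psi(\{\alpha_i\})\ast\psi(\{\beta_j\})=\sum_i f(\alpha_i).$$
These agree, so $\psi$ is a bijective quandle homomorphism and hence a quandle isomorphism.

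\textbf{Convention for $\textbf{0}$.} The element $\textbf{0}$ needs a consistent treatment, since the curve $\textbf{0}$, the empty sum and the trivial element are conflated in the setup. I expect this to be the only delicate point. The fix is to send the multicurve containing only $\textbf{0}$ to $\textbf{0}$ and to use the conventions $x\ast\textbf{0}=x$ and $\textbf{0}\ast x=\textbf{0}$ from Proposition~\ref{Dg quandle} on both sides. These are consistent because $T_{\textbf{0}}$ is the identity.
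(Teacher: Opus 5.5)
Your proposal is correct and follows the paper's own proof. Both equip $\E_g$ with the operation $\{\alpha_i\}\ast\{\beta_j\}=\{T_{\beta_1}\cdots T_{\beta_s}(\alpha_i)\}$ and observe that $\{\alpha_1,\dots,\alpha_r\}\mapsto\sum_i\alpha_i$ is a bijection onto $\W_g^1$ intertwining this operation with the weight-one case of Proposition~\ref{Dg quandle}; your extra remarks (order-independence of the commuting twists, transporting the quandle axioms through the bijection, and the convention for $\textbf{0}$) only make explicit what the paper leaves as ``a direct check''.
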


\begin{proof}
Consider the binary operation on $\E_g$ given by
$$
\{\alpha_1,\ldots,\alpha_r\}\ast \{\beta_1,\ldots,\beta_s\}=\{T_{\beta_1}\cdots T_{\beta_s}(\alpha_1),\ldots, T_{\beta_1}\cdots T_{\beta_s}(\alpha_r)\}.
$$
A direct check shows that $(\E_g,\ast)$ is a quandle. Further, the map $\phi:\E_g \to \W_g^1$ given by 
$$\{\alpha_1,\ldots,\alpha_r\} \mapsto \textstyle\sum_{i=1}^r \alpha_i$$ is an isomorphism of quandles, with its inverse $\W_g^1\to \E_g$ given by $$\textstyle\sum_{i=1}^r \alpha_i \mapsto \{\alpha_1,\ldots,\alpha_r\}.$$
\end{proof}

\begin{prop}\label{lem:zeg1_has_finite_components}
For each $g\geq 1$, the quandle $\W_g^1$ has finitely many connected components.
\end{prop}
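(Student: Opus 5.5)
By Lemma \ref{lem Eg Wg1 iso}, it suffices to show that the quandle $\E_g$ of multicurves has finitely many connected components. By Proposition \ref{prop:inner_automorphisms_of_quandles}, $\Inn(\W_g^1)\cong \mcg_g/Z(\mcg_g)$. Under the embedding $\W_g^1\hookrightarrow \Conj(\mcg_g)$, the inner automorphism $S_x$ acts by conjugation with $T_x$, which is the natural action of the mapping class $T_x$ on multicurves. Since these $T_x$ (already for $x\in\D_g^{ns}$) generate $\mcg_g$, and the center acts trivially on isotopy classes of multicurves, the orbits of $\Inn(\W_g^1)$ on $\W_g^1$ are exactly the orbits of the natural action of $\mcg_g$ on $\E_g$. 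So the plan is to reduce the statement to the classical fact that there are only finitely many $\mcg_g$-orbits of multicurves of the relevant kind.

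The key step is the change-of-coordinates principle. A multicurve $\{\alpha_1,\dots,\alpha_r\}$ consists of pairwise disjoint simple closed curves, and in $\E_g$ the essential ones are distinct isotopy classes. There are at most $3g-3$ of them for $g\ge2$, and at most $1$ for $g=1$, together possibly with the class $\textbf{0}$; I expect the remaining elements of $\E_g$ to be just these combined with $\textbf{0}$. Two such multicurves lie in the same $\mcg_g$-orbit if there is an orientation-preserving homeomorphism carrying one to the other. By \cite[Section 1.3]{MR2850125}, this holds whenever the cut surfaces $S_g\setminus\bigcup\alpha_i$ have the same combinatorial type. This type consists of the homeomorphism types of the components (genus and number of boundary components) together with the gluing pattern recording which boundary components are paired along which $\alpha_i$.

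The count then finishes the argument. The number of curves is bounded by $3g-3$, each complementary component has genus at most $g$ and at most $2(3g-3)$ boundary components, and there are finitely many gluing patterns. Hence there are only finitely many combinatorial types, and therefore finitely many orbits. For $g=1$ the check is direct: any two essential simple closed curves are related by an element of $\mathrm{SL}_2(\Z)$, so the orbits are $\{\textbf{0}\}$, the single essential curve, and possibly that curve together with $\textbf{0}$.

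The main obstacle is making precise that the $\Inn$-orbits coincide with the $\mcg_g$-orbits, and handling the conventions for $\textbf{0}$ and for repeated curves in $\E_g$. One has to check that each orbit of $\E_g$ is determined only by the topological type of the multicurve; the finiteness count itself is routine once that identification is in place.
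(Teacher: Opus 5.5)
Your proposal is correct and takes essentially the same route as the paper: you identify the $\Inn(\W_g^1)$-orbits with the $\mcg_g$-orbits of multicurves, then use the change-of-coordinates principle (homeomorphism type of the cut surface together with the gluing data) to conclude there are only finitely many orbits. The differences are minor. You bound the number of curves by $3g-3$, whereas the paper bounds the number of complementary components by $2g-2$ via Euler characteristic. Your observation that $\Inn(\W_g^1)$ is just the image of $\mcg_g$ acting on $\W_g^1$ lets you treat $g=2$ together with $g\ge 3$, whereas the paper uses $Z(\mcg_g)=1$ for $g\ge 3$ and handles $g=2$ by explicitly enumerating seven orbits.
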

\begin{proof}
The connected components of $\W_g^1$ are the orbits under the action of $\Inn(\W_g^1) =\mcg_g/Z(\mcg_g)$ on $\W_g^1$. We note that $\{\bf 0\}$ is a connected component of $W_g^1$. For $g\geq 3$, by \cite[Theorem 3.10]{MR2850125}, we have $Z(\mcg_g)=1$. Hence, for $g\geq 3$, in view of Lemma \ref{lem Eg Wg1 iso}, it is enough to prove that $\mcg_g$ has finitely many orbits of multicurves on $S_g$. Let $C$ be a multicurve on $S_g$. Suppose that a connected component of the cut surface $\overline{S_g\setminus C}$ is a sphere with one boundary component. Since a sphere with one boundary component is a disk, there exists a curve $\gamma\in C$ which bounds a disk. But, such a curve $\gamma$ is null-homotopic, which is not possible. Now, suppose that a connected component of $\overline{S_g\setminus C}$ is a sphere with two boundary components. This means that there are two curves $\gamma_1,\gamma_2\in C$ such that $\gamma_1$ and $\gamma_2$ bound an annulus. But, such a pair of curves are isotopic, which is not possible. Hence, the set of connected components of $\overline{S_g\setminus C}$ does not contain a sphere with one or two boundary components. This implies that the Euler characteristic of any connected component of $\overline{S_g\setminus C}$ is at least $-1$. Since the Euler characteristic of $S_g$ is $2-2g$, there are at most $2g-2$ connected components of $\overline{S_g\setminus C}$. Thus, there are only finitely many homeomorphism types of connected components of $\overline{S_g\setminus C}$. Consequently, there are only finitely many choices of gluing the boundaries of connected components of $\overline{S_g\setminus C}$ to retrieve $S_g$. For each $\gamma\in C$, we denote the two resultant boundary curves of the cut-surface $\overline{S_g\setminus C}$ by $\gamma^1$ and $\gamma^2$. Given two multicurves $C_1$ and $C_2$ on $S_g$ such that $\phi(C_1)=C_2$ for some orientation-preserving homeomorphism $\phi$ of $S_g$, there is an induced orientation-preserving homeomorphism $\bar{\phi}$ of $\overline{S_g\setminus C_1}$ onto $\overline{S_g\setminus C_2}$. Note that, for each $\gamma \in C_1$, the resultant boundary curves $\gamma^1$ and $\gamma^2$ lie in the same connected component of $\overline{S_g\setminus C_1}$ if and only if both $\bar{\phi}(\gamma^1)$ and $\bar{\phi}(\gamma^2)$ lie in the same connected component of $\overline{S_g\setminus C_2}$. Conversely, if $\psi:\overline{S_g\setminus C_1}\to\overline{S_g\setminus C_2}$ is an orientation-preserving homeomorphism such that both $\overline{S_g\setminus C_1}$ and $\overline{S_g\setminus C_2}$ have the same gluing data, then the map $\psi$ extends to an orientation-preserving homeomorphism of $S_g$ that maps $C_1$ onto $C_2$. Thus, up to isotopy, two multicurves $C_1$ and $C_2$ on $S_g$ lie in the same orbit under the action of $\mcg_g$ if and only if $\overline{S_g\setminus C_1}$ is homeomorphic to $\overline{S_g\setminus C_2}$ and they admit the same gluing data. Since there are only finitely many choices for the gluing data, it follows that $\M_g$ has finitely many orbits of multicurves on $S_g$. 
\par

For $g=1$, we have $\W_1^1=\D_1$. Since every essential simple closed curve on $S_1$ is non-
separating, they all lie in the same connected component of $\W_1^1$. Thus, $\W_1^1$ has precisely two connected components.
\par

For $g=2$, the size of any multicurve on $S_2$ is at most four. Note that there are no two disjoint essential separating curves on $S_2$. Let $C$ be a multicurve on $S_2$ containing only essential curves. If the size of $C$ is one, then it either contains a non-separating or a separating curve. Thus, we have three $\mcg_2$ orbits of multicurves of size one, where $\{\bf 0\}$ is one of the orbits. 
\par
Next, assume that $C$ contains two non-separating curves. Then $\overline{S_2\setminus C}$ is homeomorphic to a sphere with four boundary components. If $C_1$ and $C_2$ are two such multicurves, then $\overline{S_2\setminus C_1}$ is homeomorphic to $\overline{S_2\setminus C_2}$. Thus, there is a mapping class $f\in \mcg_2$ such that $f(C_1)=C_2$. Now, let $C$ contain one non-separating and one separating curve. Then $\overline{S_2\setminus C}$ is homeomorphic to a disjoint union of a pair of pants and a torus with one boundary component. Note that there is only one choice of gluing boundaries of a pair of pants to a torus with one boundary component to retrieve $S_2$. Thus, if there are two such multicurves $C_1$ and $C_2$, then $\overline{S_2\setminus C_1}$ is homeomorphic to $\overline{S_2\setminus C_2}$, and hence there is a mapping class $f\in \mcg_2$ such that $f(C_1)=C_2$. Hence, we have two $\mcg_2$ orbits of multicurves of size two. 
\par
Now, assume that the size of $C$ is three. In this case, $\overline{S_2\setminus C}$ is homeomorphic to a disjoint union of two pairs of pants. Note that there are two choices of gluing boundaries of these two pairs of pants to retrieve $S_2$. Based on these gluings, the resultant multicurve on $S_2$ either contains a separating curve or does not. Thus, there are two $\mcg_2$ orbits of multicurves of size three on $S_2$. Hence, $\mcg_2$ has  seven orbits on $\W_2^1$. Figure~\ref{fig:six_orbits_of_multicurves_on_s2} shows one representative of each of these orbits except that of $\{\bf 0\}$. It is known from \cite[Section 3.4, Proposition 7.15]{MR2850125} that $Z(\mcg_2)$ is generated by the hyperelliptic involution $\iota$ on $\mcg_2$. Since these multicurves are invariant under $\iota$, we conclude that $\Inn(\W_2^1)$ also has seven orbits on $\W_2^1$.
\end{proof}

\begin{figure}
\centering
\includegraphics[scale=1]{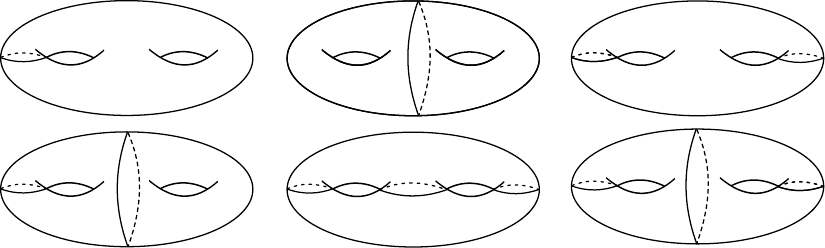}
\caption{Representatives of $\mcg_2$ orbits of essential multicurves on $S_2$.}
\label{fig:six_orbits_of_multicurves_on_s2}
\end{figure}

\begin{prop}\label{prop Wg1 is finitely generated}
For each $g\geq 1$, the quandle $\W_g^1$ is finitely generated. 
\end{prop}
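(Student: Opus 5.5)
The plan is to combine Proposition~\ref{lem:zeg1_has_finite_components} with the standard fact that a quandle is generated by a set containing one point from each connected component together with a generating set of its inner automorphism group coming from quandle elements. Concretely, the connected components of $\W_g^1$ are the orbits of $\Inn(\W_g^1)$, and by Proposition~\ref{prop:inner_automorphisms_of_quandles} this group is $\mcg_g/Z(\mcg_g)$, acting through the action of $\mcg_g$ on weight one multicurves. By Proposition~\ref{lem:zeg1_has_finite_components} there are finitely many orbits; we choose representatives $C_1,\dots,C_k\in\W_g^1$, one in each orbit.

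Next, we fix a finite set of non-separating simple closed curves $\gamma_1,\dots,\gamma_m$ whose Dehn twists generate $\mcg_g$. The Humphries generators work, or we can cite \cite[Theorem 4.1]{MR2850125} together with finite generation; \cite[Proposition 5.4]{MR4669143} already gives a finite generating set of $\D_g^{ns}$, which also suffices. Each $\gamma_j$ lies in $\D_g\subset\W_g^1$, and its inner automorphism $S_{\gamma_j}$ acts on $\W_g^1$ by $T_{\gamma_j}$. We set $G=\{C_1,\dots,C_k,\gamma_1,\dots,\gamma_m\}$ and claim that $G$ generates $\W_g^1$.

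To prove the claim, we take an arbitrary $x\in\W_g^1$. It lies in the orbit of some $C_i$, so $x=f(C_i)$ for some $f\in\mcg_g$. We write $f=T_{\gamma_{j_1}}^{\epsilon_1}\cdots T_{\gamma_{j_n}}^{\epsilon_n}$ with $\epsilon_l=\pm1$. Then
\[
x=S_{\gamma_{j_1}}^{\epsilon_1}\cdots S_{\gamma_{j_n}}^{\epsilon_n}(C_i)=\bigl(\cdots(C_i\ast^{\epsilon_n}\gamma_{j_n})\cdots\bigr)\ast^{\epsilon_1}\gamma_{j_1},
\]
so $x$ lies in the subquandle generated by $G$. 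Some care is needed for the element $\textbf{0}$: it forms its own component, so it is simply one of the representatives $C_i$.

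The main obstacle is purely bookkeeping: we must check that the action of $\mcg_g$ on $\W_g^1$ used in Proposition~\ref{lem:zeg1_has_finite_components} agrees with the inner action, that is, $S_\beta=T_\beta$ on weight one multicurves. This follows directly from the definition of $\ast$ in Proposition~\ref{Dg quandle}. For $g=1,2$, the finite orbit count was computed directly in that proposition, so the same argument applies uniformly for all $g\ge1$.
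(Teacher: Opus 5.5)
Your proof is correct and is essentially the paper's argument: finitely many $\Inn(\W_g^1)$-orbits by Proposition~\ref{lem:zeg1_has_finite_components}, one representative chosen in each orbit, and finitely many non-separating curves whose Dehn twists generate $\mcg_g$, used to reach every element of an orbit from its representative. The only cosmetic difference is that the paper phrases the second ingredient as a finite quandle generating set of the component $\D_g^{ns}$ (the Humphries curves), whereas you use the Humphries twists directly as group generators of $\mcg_g$; with your choice, the representative of the $\D_g^{ns}$ component is redundant but harmless.
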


\begin{proof}
Let $\D_g^{ns}$ be the subquandle of $\D_g$ consisting of isotopy classes of non-separating simple closed curves on $S_g$.  Then $\D_g^{ns}$ is also a subquandle of $\W_g^1$. By \cite[Section 1.3.1]{MR2850125}, for each pair of elements of $\D_g^{ns}$, there is a homeomorphism mapping one onto the other. Since such a homeomorphism is a product of Dehn twists along non-separating simple closed curves, it follows that elements of $\D_g^{ns}$ correspond to a single conjugacy class under the embedding $\D_g \hookrightarrow \Conj(\M_g)$. Consequently, the quandle $\D_g^{ns}$ is connected, and is a connected component of $\W_g^1$. By Proposition~\ref{lem:zeg1_has_finite_components}, $\W_g^1$ has finitely many connected components. We write $\W_g^1=\D_g^{ns} \sqcup_{i=1}^n D_i$, where each $D_i$ is a connected component of $\W_g^1$ other than $\D_g^{ns}$. It is known from \cite[Proposition 5.4]{MR4669143} that $\D_g^{ns}$ is a finitely generated quandle, in fact, generated by Humphries generators. Further, due to the embedding $\W_g^1 \hookrightarrow \Conj(\M_g)$, we can view each $D_i$ as a subset of a conjugacy class in $\M_g$. But, each element of $\M_g$ can be written as a product of Dehn twists along elements from $\D_g^{ns}$. Thus, fixing a representative $x_i$ of the connected component $D_i$, it follows that $\W_g^1$ is generated by a finite generating set of $\D_g^{ns}$ and the set $\{x_1, \ldots, x_n \}$. Hence, $\W_g^1$ is finitely generated.
\end{proof}

\begin{rem}
{\rm The table below summarizes the results concerning finite generation and the number of connected components of the Dehn quandles.
\begin{center}
\begin{table}[ht]
\begin{tabular}{|c|c|c|c|c|c|c|}
 \hline 
  & $\D_g^{ns}$ & $\D_g$ & $\C_g$ & $\W_g$ & $\W_g^+$ & $\W_g^1$ \\ 
 \hline 
 Finite generation & Yes & Yes & No & No & No & Yes \\ 
 \hline 
 Connectedness & Yes & No & No & No & No & No \\ 
 \hline
 Number of connected components & Finite & Finite & Infinite & Infinite & Infinite & Finite \\
 \hline
 \end{tabular}
 \end{table}
\end{center}}
\end{rem}

\begin{rem}
\rm{For convenience, we refer to all the quandles $\D_g^{ns}, \D_g, \C_g, \W_g, \W_g^1$ and $\W_g^+$ as Dehn quandles of the surface $S_g$. In view of Corollaries \ref{cor:cg_subquandle_of_mcg} and \ref{cor:zeg_subquandle_mcg}, we will identify elements of these quandles with corresponding elements of $\M_g$. Under this identification, a connected component corresponds to a conjugacy class in $\M_g$.}
\end{rem}

\begin{rem}
\rm{ Our constructions naturally extend to orientable surfaces with punctures and/or boundary components. However, in this paper, we restrict our attention to closed orientable surfaces. For treatments of quandle structures on orientable surfaces with punctures, we refer the reader to \cite{MR1865704, MR1985908}.}
\end{rem}
\medskip

\section{Metrics on Dehn quandles of surfaces}\label{section Metrics on Dehn quandles}

Let $X$ be a quandle. In \cite{MR4779104}, K\k{e}dra introduced a metric, which we refer to as the {\it quandle metric}, on each connected component of $X$ by defining
	$$d(x,y)=\mathrm{min}\{n \in \mathbb{N} \mid y=(((x*^{\pm 1} x_1)*^{\pm 1}  x_2)*^{\pm 1}  \cdots )*^{\pm 1}  x_n~\textrm{for some}~x_1, \ldots, x_n \in X\}.$$
We say that $X$ is {\it bounded} if the metric $d$ is bounded (has finite diameter) on each connected component of $X$. Otherwise, it is called {\it unbounded}. It turns out that fundamental quandles of non-trivial knots and free quandles are unbounded with respect to this metric \cite[Example 3.17]{MR4779104}. 

The following result establishes a  relation of the quandle metric with the second bounded cohomology of quandles \cite[Theorem 1.2]{MR4779104}, thereby making the study of the latter relevant.

\begin{theorem}\label{bounded iff comp map injective}
A quandle $X$ is bounded if and only if the kernel of the comparison map $c^2: H^2_b(X, \mathbb{R}) \to H^2(X, \mathbb{R})$ is trivial.
\end{theorem}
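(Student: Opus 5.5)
The plan is to identify the kernel of $c^2$ with a space of ``quandle quasimorphisms'' modulo genuine 1-cocycles, and then to tie the boundedness of such functions to the metric $d$. From \eqref{eq:coboundary_operator}, for $f \in C^1(X,\mathbb{R})$ we get $\delta^1 f(x,y) = f(x) - f(x*y)$. Hence $f$ is a $1$-cocycle exactly when it is constant on connected components, that is, when it is $\Inn(X)$-invariant. Call $f$ a \emph{quandle quasimorphism} if $\delta^1 f$ is bounded. Since $D^1=0$, the quandle and rack complexes agree in degrees $\le 1$, so degree-two normalization plays no role here.

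The first step is the standard diagram chase. A class $[c] \in \ker(c^2)$ has $c = \delta^1 f$ for some unbounded cochain $f$, and $[c]=0$ in $H^2_b$ exactly when $c=\delta^1 h$ with $h$ bounded. This gives
\[
\ker(c^2) \cong \{\text{quandle quasimorphisms}\} \big/ \big(H^1(X,\mathbb{R}) + C^1_b(X,\mathbb{R})\big).
\]
Therefore $\ker(c^2)=0$ if and only if every quandle quasimorphism is, up to a bounded error, constant on each connected component.

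For the direction ``bounded implies $\ker c^2=0$'', I take a quasimorphism $f$ with $\|\delta^1 f\|_\infty = D$. If $y$ is obtained from $x$ by $n$ operations $*^{\pm1}x_i$, then each step changes $f$ by at most $D$. This holds for $*^{-1}$ as well, because $x = (x*^{-1}z)*z$. So $|f(x)-f(y)| \le D\, d(x,y)$. Suppose each component $X_i$ has finite diameter $R_i$, and fix a base point $x_i \in X_i$. Then $f - \sum_i f(x_i)\mathbf{1}_{X_i}$ is bounded by $D\sup_i R_i$. The catch is that this requires a uniform bound on the diameters over all components. I would read ``bounded'' in that sense, as in K\k{e}dra's paper, since otherwise the statement fails already for an infinite disjoint union.

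For the converse, I argue by contrapositive. Suppose $d$ is unbounded, so there are points $x_k, y_k$ in components $X_{i_k}$ with $d(x_k,y_k)\to\infty$. I define $f(y) = d(x_{i},y)$ on each component $X_i$, using a base point $x_i$ in each component; where needed I pass to a subsequence, or use the base points $x_k$ themselves on the relevant components. The triangle inequality gives $|f(y*z)-f(y)| \le d(y,y*z) \le 1$, so $f$ is a quasimorphism with defect at most $1$.

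The main obstacle is showing that $f$ stays unbounded after subtracting any function $\sum_i c_i \mathbf{1}_{X_i}$. On each component, $f - c_i$ takes both the value $-c_i$ (at the base point) and values at least $R_i - c_i$, where $R_i$ is the radius of $X_i$ about $x_i$. So $\|f - \sum_i c_i\mathbf{1}_{X_i}\|_\infty \ge \sup_i R_i/2 = \infty$. With the base points chosen as above, the radii satisfy $\sup_i R_i=\infty$, since the diameter is at most twice the radius and $d(x_k,y_k)\to\infty$. This gives a nonzero class in $\ker(c^2)$ and completes the argument.
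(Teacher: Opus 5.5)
Your proof is correct. The paper itself does not prove this statement. It quotes it from K\k{e}dra's paper (Theorem~1.2 there), so you are supplying a self-contained argument where the paper only gives a citation.

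Your route is the natural one, and each step checks out:
\begin{enumerate}
\item Since $\delta^0=0$ and $\delta^1 f(x,y)=f(x)-f(x*y)$, one has $\ker c^2\cong \mathcal{Q}/(Z^1+C^1_b(X,\mathbb{R}))$. Here $\mathcal{Q}$ is the space of functions $f$ with $\delta^1 f$ bounded, and $Z^1$ is the space of $\Inn(X)$-invariant functions. Normalization is harmless because $\delta^1 f$ vanishes on the diagonal.
\item A function has defect at most $D$ exactly when it is $D$-Lipschitz for $d$ on every component. Your handling of $*^{-1}$ via $x=(x*^{-1}z)*z$ is right.
\item The function $y\mapsto d(x_i,y)$ has defect at most $1$ and lies at sup-distance at least $\tfrac12\sup_i R_i$ from $Z^1$.
\end{enumerate}
The special choice of base points in your last step is unnecessary. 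The radius of a component about any of its points is at least half its diameter, so any choice gives $\sup_i R_i=\infty$.

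Your caveat about uniformity is justified, and in fact necessary. With the paper's wording (``finite diameter on each connected component''), the implication from boundedness to $\ker c^2=0$ fails. Take $X=\{b\}\sqcup\bigsqcup_{N\ge1}\Z/N$, with $k*b=k+1$ for $k\in\Z/N$ and $x*y=x$ for every other pair.
\begin{itemize}
\item $S_b$ rotates all cycles simultaneously and every other $S_y$ is the identity. All $S_y$ commute, and $S_z(y)=b$ if and only if $y=b$, so the quandle axioms hold.
\item The components are $\{b\}$ and the cycles, and $\Z/N$ has finite diameter $\lfloor N/2\rfloor$.
\item Define $f(k)=\min(k,N-k)$ on $\Z/N$ (with $0\le k<N$) and $f(b)=0$. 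This is a quasimorphism of defect $1$, but it is not within bounded distance of any $\Inn(X)$-invariant function.
\end{itemize}
Hence $[\delta^1 f]$ is a nonzero element of $\ker c^2$, even though every component has finite diameter. The converse implication ($\ker c^2=0$ implies bounded) holds under either reading. A single component of infinite diameter already yields a nonzero class by your construction.
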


We begin with the following observation.

\begin{prop}\label{unbounded quandles}\label{prop:all_qunadles_unbounded}
For each $g\geq 1$, the quandles $\D_g^{ns}, \D_g,\C_g, \W_g, \W_g^1$ and $\W_g^+$ are unbounded with respect to the quandle metric.
\end{prop}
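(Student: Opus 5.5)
The idea is to find, in each quandle, a single connected component on which a homogeneous quasimorphism of $\M_g$ is unbounded. Since the quandle metric on that component can be controlled by such a quasimorphism, this forces infinite diameter.

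Every quandle $X$ in the list is a subquandle of $\Conj(\M_g)$, by Corollaries~\ref{cor:cg_subquandle_of_mcg} and \ref{cor:zeg_subquandle_mcg}. Each of them contains $\D_g^{ns}$. By Proposition~\ref{prop Wg1 is finitely generated}, $\D_g^{ns}$ is a single connected component of $\W_g^1$, and in every $X$ it lies in the component of Dehn twists along non-separating curves, that is, in one conjugacy class of $\M_g$.

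Suppose $y=(\cdots(x*^{\pm1}x_1)\cdots)*^{\pm1}x_n$. Under the identification with $\M_g$ this reads
\[
y = w\,x\,w^{-1}, \qquad w = x_n^{\pm1}\cdots x_1^{\pm1},
\]
where each $x_i$ is a product of powers of Dehn twists. In particular $x^{-1}y=[x^{-1},w]$.

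The difficulty is that the $x_i$ are arbitrary elements of $X$, so they are not of bounded word length. For the larger quandles ($\C_g$, $\W_g$, $\W_g^+$) the elements $T_\alpha^n$ have unbounded size. A naive word-length argument therefore fails, and I would instead use a quasimorphism $\psi$ on $\M_g$ (pulled back along $\M_g\to\Inn(X)$ if necessary) with two properties:
\begin{itemize}
\item[(a)] $\psi$ is homogeneous, hence conjugation invariant, so $\psi$ is bounded on the set $X$ whenever $X$ meets only finitely many conjugacy classes on which $\psi$ is nonzero. Better still, $\psi$ vanishes on all products of commuting Dehn twists, i.e.\ on all multitwists.
\item[(b)] $\psi$ is unbounded on $\{x^{-1}y : y \text{ in the component of } x\}$.
\end{itemize}

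Granting (a) with vanishing on multitwists, the quasimorphism inequality gives, for each $n$,
\[
|\psi(x^{-1}y)| \le C n .
\]
To see this, write
\[
x^{-1}y=\prod_{i}\bigl(x_{i-1}'^{-1}\cdots\bigr),
\]
telescoping as in Kędra's argument: $y_{i}=y_{i-1}*^{\pm1}x_i$ gives $y_{i-1}^{-1}y_i=y_{i-1}^{-1}x_i^{\pm1}y_{i-1}x_i^{\mp1}$. This is a product of four multitwist-type elements (twists along curves and multicurves and their inverses), so $\psi$ of it is bounded by $4D(\psi)$. Summing over the $n$ steps, $d(x,y)\ge |\psi(x^{-1}y)|/(5D(\psi))$.

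Elements of $\C_g$ map to products $T_{\alpha_1}\cdots T_{\alpha_n}$ of twists along the resolved curves. These curves are pairwise disjoint, but possibly repeated. So every element of every listed quandle is a multitwist $T_{\alpha_1}^{n_1}\cdots T_{\alpha_r}^{n_r}$ on a multicurve, and we need $\psi$ to vanish on all of them.

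The main obstacle is producing such a $\psi$ that also satisfies (b). For $g\ge 1$, I would use Bestvina--Fujiwara: $\M_g$ acts on the curve complex, which is hyperbolic, with pseudo-Anosov WPD elements. This gives an infinite-dimensional space of homogeneous quasimorphisms, and they can be chosen to vanish on reducible elements, in particular on multitwists, since multitwists act elliptically on the curve complex. For $g=1$, $\M_1=SL_2(\Z)$ acts on a tree, and the same construction (or Rademacher-type quasimorphisms adjusted to kill parabolic elements) works.

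For (b), take a non-separating $\alpha$ and $x=T_\alpha$. Choose a pseudo-Anosov-like element $f$ and set $y=T_{f^k(\alpha)}$. Then
\[
x^{-1}y=T_\alpha^{-1}\,f^kT_\alpha f^{-k}.
\]
I would instead arrange, using products $T_aT_b^{-1}$ (which are pseudo-Anosov for filling $a,b$ by Thurston's construction), that $x^{-1}y$ has unbounded $\psi$. Concretely, with $a,b$ filling, consider $y_k=(T_a T_b^{-1})^k(a)$, in the same component as $a$, and compare $T_a^{-1}T_{y_k}$. The key fact to verify is that $\psi(T_a^{-1}T_{y_k})$ grows, which I expect to be the delicate step. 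If this proves awkward, the fallback is to directly choose $\psi$ via Bestvina--Fujiwara to be unbounded on the specific sequence of elements $T_a^{-1}hT_ah^{-1}$ with $h=(T_aT_b^{-1})^k$, since these are conjugate to the "commutator of pseudo-Anosov powers" whose stable lengths grow.
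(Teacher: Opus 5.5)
\textbf{Gap: property (b) cannot hold for any quasimorphism.} As you observe yourself, if $y=wxw^{-1}$ then $x^{-1}y=[x^{-1},w]$ is a single commutator. Homogeneous quasimorphisms are uniformly bounded on commutators. By conjugation invariance, $\psi(aba^{-1})=\psi(b)=-\psi(b^{-1})$, so $|\psi(aba^{-1}b^{-1})|\le D(\psi)$. A non-homogeneous quasimorphism lies at bounded distance from its homogenization, so it is bounded on commutators up to a constant. Hence $y\mapsto\psi(x^{-1}y)$ is bounded on the entire conjugacy class of $x$. Your inequality $d(x,y)\ge|\psi(x^{-1}y)|/(5D(\psi))$ is valid, but it only ever yields a constant lower bound. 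In particular, the step you flag as delicate and your fallback are both false. For every $k$, the element $T_a^{-1}hT_ah^{-1}$ with $h=(T_aT_b^{-1})^k$ is one commutator, so no choice of $\psi$ makes it grow, whatever the stable length of $h$ is.

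\textbf{Fix: record the conjugator, not the commutator.} For $x=T_\alpha$ and $y=wxw^{-1}$, put $F(y)=\psi(w)$.
\begin{itemize}
\item Two such conjugators differ on the right by an element of $C_{\M_g}(T_\alpha)=\mathrm{Stab}_{\M_g}(\alpha)$. So $F$ is well defined up to $D(\psi)$ provided $\psi$ vanishes on this stabilizer. Your plan never imposes this hypothesis, although vanishing on all reducible elements, which you assert is achievable, would give it.
\item A quandle step $y\mapsto y\ast^{\pm1}z$ replaces $w$ by $z^{\pm1}w$. Once $\psi$ vanishes on all multitwists, this changes $\psi$ by at most $D(\psi)$. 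Hence $d(x,y)\ge|\psi(w)|/D(\psi)-1$.
\item Since $X$ generates $\M_g$, you may take $w=f^k$ with $\psi(f)\neq0$, which gives infinite diameter.
\end{itemize}
The required $\psi$ is supplied by Theorem~\ref{thm:bounded_morphisms_mcg} together with Lemma~\ref{lem:vanishing_morphism_multitwist}; this is the mechanism behind Theorem~\ref{thm:sufficient_condition2}.

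\textbf{Comparison with the paper.} The paper argues differently: it transports unboundedness from $\D_1$, the trefoil knot quandle, via the embedding $\D_1\hookrightarrow\D_g^{ns}\subseteq X$. That inheritance step is not automatic, because passing to a larger quandle can only shrink distances. For instance, the unbounded free quandle on two generators sits inside $\Conj(F_2)$, all of whose components have diameter at most $1$. The corrected quasimorphism argument above is what actually covers all six quandles uniformly for every $g$.
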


\begin{proof}
By \cite[Theorem 3.1]{MR2583322}, we know that $\D_1$ is isomorphic to the knot quandle of the trefoil knot. By \cite{MR4779104}, knot quandles are unbounded, and hence $\D_1$ is unbounded. It is proved in \cite[Proposition 5.8]{MR4669143} that there is an embedding of quandles $\D_1 \hookrightarrow \D_g^{ns}$ for each $g \ge 1$. Since $\D_1$ is a subquandle of $\D_g^{ns}, \D_g,\C_g, \W_g, \W_g^1$ and $\W_g^+$, it follows that each of these quandles is unbounded.
\end{proof}

For $\alpha, \beta \in \D_g$ with $g\ge 1$, let $i(\alpha, \beta)$ denote their {\it  geometric intersection number}. Recall that, the {\it curve complex} of $S_g$ is a graph with vertex set $\D_g$ such that there is an edge between the vertices $\alpha, \beta \in \D_g$ if $i(\alpha, \beta) = 0$ for $g \ge 2$ and $i(\alpha, \beta) = 1$ for $g = 1$. This equips $\D_g$ with a natural metric $d'$, which we refer to as the {\it curve complex metric}. Below we explore how the metrics $d$ and $d'$ are related.
\par

\begin{lem}\label{lower bound dehn twists}
Let $g \ge 1$, and $\alpha$ and $\beta$ be non-separating simple closed curves on $S_g$ such that $i(\alpha,\beta)=1$. Let $f\in \M_g$ be such that $f=T_{\beta}^nh$ for some $h\in \mathrm{Stab}_{\M_g}(\alpha)$ and some non-zero integer $n$. Then $f$ is a product of at least $|n|$ Dehn twists.
\end{lem}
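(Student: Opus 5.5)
The plan is to find a quantity that detects $n$ from $f$ and changes by a bounded amount under each Dehn twist factor. The natural candidate is the displacement of $\alpha$. Since $h\in \mathrm{Stab}_{\M_g}(\alpha)$, we have $f(\alpha)=T_\beta^n h(\alpha)=T_\beta^n(\alpha)$. By the standard intersection formula for Dehn twists \cite[Proposition 3.2]{MR2850125},
$$i\bigl(T_\beta^n(\alpha),\alpha\bigr)=|n|\, i(\alpha,\beta)^2=|n|.$$
So $f$ moves $\alpha$ to a curve meeting $\alpha$ exactly $|n|$ times, independently of the choice of $h$. The whole argument therefore reduces to showing that a product of $k$ Dehn twists cannot move $\alpha$ to a curve with $i(f(\alpha),\alpha)>k$.

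Write $f=T_{c_1}^{\epsilon_1}\cdots T_{c_k}^{\epsilon_k}$ with $\epsilon_j=\pm1$, and set $\alpha_j=T_{c_{j+1}}^{\epsilon_{j+1}}\cdots T_{c_k}^{\epsilon_k}(\alpha)$. The first idea is to bound $i(\alpha_{j-1},\alpha)$ in terms of $i(\alpha_j,\alpha)$ using
$$\bigl|\,i(T_c^{\pm1}(x),y)-i(x,y)\,\bigr|\le i(x,c)\,i(c,y).$$
This gives growth controlled by the intersection numbers with the $c_j$, not by $1$ per step. So this step, as stated, is the main obstacle.

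To get around it, I would first try to exploit that the relevant count should be the \emph{quandle-theoretic} one: $f(\alpha)$ is obtained from $\alpha$ by $k$ successive operations $\ast^{\pm1} c_j$. I would pass to a one-step-controlled invariant on the connected component of $\alpha$ in $\D_g$. The first choice is a homological invariant: on $H_1(S_g;\Z)$, $T_c$ acts by $x\mapsto x+\langle x,c\rangle c$. For $g=1$, this makes the component of $\alpha$ the trefoil quandle, and a quasimorphism-type count (via the covering $\widetilde{SL_2}$ / rotation number, which changes by at most one unit per twist) bounds $k$ from below by a multiple of $|n|$.

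If the constant does not come out exactly $1$, I would refine the choice of invariant rather than change the strategy. For general $g$, I would reduce to the torus case: the regular neighbourhood $N$ of $\alpha\cup\beta$ is a one-holed torus, and the Dehn twist along $\beta$ generates the relevant $\Z$-factor in $\M(N)$ modulo $\mathrm{Stab}(\alpha)$. The intended key step is then that every twist factor contributes at most one to the projected rotation-type count on $N$. I regard verifying this bounded-change-per-twist property as the crux, and I would test it first against the candidate $h=T_{\beta'}^{-n}$ with $\beta'$ disjoint from $\alpha$, to make sure the bound cannot be cancelled.
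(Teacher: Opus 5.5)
Your computation $i(T_\beta^n(\alpha),\alpha)=|n|$ is correct. However, the reduction you build on it, and the ``crux'' you isolate, cannot be established, because the statement itself is false for $|n|\ge 3$.

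If $i(a,b)=1$, then $T_aT_b(a)=b$, and hence $T_{T_a(b)}(a)=T_aT_bT_a^{-1}(a)=b$. This is exactly the computation in the proof of Lemma~\ref{lem:comparison_of_distance}. Apply it twice:
\begin{enumerate}
\item to $(a,b)=(\alpha,\beta)$;
\item to $(a,b)=(\beta,T_\beta^n(\alpha))$, which is legitimate because $i(\beta,T_\beta^n(\alpha))=i(T_\beta^{-n}(\beta),\alpha)=1$.
\end{enumerate}
Then
\[
f=T_{T_\beta^{n+1}(\alpha)}\,T_{T_\alpha(\beta)}
\]
satisfies $f(\alpha)=T_{T_\beta^{n+1}(\alpha)}(\beta)=T_\beta^n(\alpha)$. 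So $h=T_\beta^{-n}f\in\mathrm{Stab}_{\M_g}(\alpha)$, and $f=T_\beta^nh$ is a product of two positive twists about non-separating curves. This holds for every $n$ and every $g\ge 1$.

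Thus a product of $k=2$ twists moves $\alpha$ to a curve meeting it $|n|$ times, and $d(\alpha,T_\beta^n(\alpha))\le 2$. On the torus this is just the Farey-graph fact that $(1,0)$ and $(1,n)$ are both adjacent to $(0,1)$.

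The quasimorphism route is doomed for a reason your own proposed test would have revealed. A homogeneous quasimorphism $\phi$ is a class function, and $T_\alpha$, $T_\beta$ are conjugate, so $\phi(T_\alpha)=\phi(T_\beta)$. Hence the element $T_\beta^nT_\alpha^{-n}$ of the coset $T_\beta^n\,\mathrm{Stab}_{\M_g}(\alpha)$ satisfies $|\phi(T_\beta^nT_\alpha^{-n})|\le D(\phi)$. No rotation-type count can therefore give a lower bound growing with $n$ uniformly over the coset.

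The paper's own proof does not supply a valid argument either. Its induction only rewrites the particular word $T_\beta^n(T_\beta T_\alpha^2T_\beta)^{p_1}T_\alpha^{q_1}\cdots$ and counts uncancelled $T_\beta$ letters. That says nothing about other factorizations of $f$ into twists about arbitrary curves, such as the two-twist factorization above.

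This has downstream consequences:
\begin{itemize}
\item Part (1) of Lemma~\ref{lem:comparison_of_distance} is false for $|n|\ge 3$.
\item The torus corollary $d'(\alpha,T_\beta^n(\alpha))=|n|$ is false for $|n|\ge 3$.
\item The argument for the $g\ge 2$ part of Proposition~\ref{prop:comparison_of_distance}, which uses $d(\alpha,T_\beta^3(\alpha))=3$, breaks down.
\end{itemize}
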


\begin{proof}
For simplicity, we assume that $g=1$ (the argument works for any $g \ge 1$). By the change of coordinate principle~\cite[Section 1.3.3]{MR2850125}, we can choose $\alpha$ and $\beta$ to be any pair of non-separating simple closed curves on the torus $S_1$ with geometric intersection number one. In particular, we can choose $\alpha$ and $\beta$ to be as shown in Figure~\ref{fig:nonsep_curves_s2}. 
\begin{figure}
\centering
\includegraphics[scale=1]{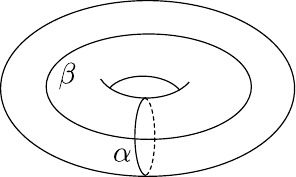}
\caption{Two simple closed curves on the torus intersecting geometrically once.}
\label{fig:nonsep_curves_s2}
\end{figure}
By~\cite[Proposition 3.20]{MR2850125}, we have the short exact sequence
$$
1 \to \langle T_{\alpha}\rangle \to \mathrm{Stab}_{\M_1}(\alpha) \to \M(S_1\setminus \alpha) \to 1.
$$
Further, by~\cite[Section 2.2.2]{MR2850125}, we have $\M(S_1\setminus \alpha\approx S_{0,2})=\langle  \iota\rangle \cong \Z_2$, where $\iota$ is the hyperelliptic involution. It is known from \cite[Section 5.1.4]{MR2850125} that $\iota=T_{\alpha}T_{\beta}T_{\alpha}^2T_{\beta}T_{\alpha}$. Hence, it follows that $\mathrm{Stab}_{\M_1}(\alpha)=\langle T_{\alpha}, T_{\beta}T_{\alpha}^2T_{\beta} \rangle$.
\par

Since $T_{\beta}^{-n}\not\in \mathrm{Stab}_{\M_1}(\alpha)$, $f$ is not the identity map. Without loss of generality, we assume that $n>0$ as a similar argument given below will work for $n<0$. If $h$ is the identity map, then $f=T_{\beta}^n$, which is a product of $|n|$ Dehn twists. Now, we assume that $h$ is not the identity map. If $n=1$, then $f=T_{\beta}h$ is a product of at least one Dehn twist as $h\neq T_{\beta}^{-1}$. Thus, we assume that $n\geq 2$. Since $h\in \mathrm{Stab}_{\M_1}(\alpha)$, there are integers $p_i,q_i$ such that
$$
f=T_{\beta}^n (T_{\beta}T_{\alpha}^2T_{\beta})^{p_1}T_{\alpha}^{q_1}(T_{\beta}T_{\alpha}^2T_{\beta})^{p_2}T_{\alpha}^{q_2}\dots (T_{\beta}T_{\alpha}^2T_{\beta})^{p_m}T_{\alpha}^{q_m}
$$
for some positive integer $m$. Using induction on $m$, we show that $f$ is a product of at least $|n|$ Dehn twists.
\par

For the case $m=1$, we have $f=T_{\beta}^n (T_{\beta}T_{\alpha}^2T_{\beta})^{p_1}T_{\alpha}^{q_1}.$ Observe that, if $p_1\geq 0$, then there is no cancellation of any Dehn twists about the curve $\beta$ in the product $T_{\beta}^n(T_{\beta}T_{\alpha}^2T_{\beta})^{p_1}$ appearing in the expression of $f$. Thus, $f$ is a product of at least $|n|$ Dehn twists. If $p_1<0$, then we have
\begin{align*}
f=T_{\beta}^n(T_{\beta}T_{\alpha}^2T_{\beta})^{p_1}T_{\alpha}^{q_1}
&=T_{\beta}^n(T_{\beta}T_{\alpha}^2T_{\beta})^{-1}(T_{\beta}T_{\alpha}^2T_{\beta})^{p_1+1}T_{\alpha}^{q_1}\\
&=T_{\beta}^{n-1}T_{\alpha}^{-2}T_{\beta}^{-1}(T_{\beta}^{-1}T_{\alpha}^{-2}T_{\beta}^{-1})^{-p_1-1}T_{\alpha}^{q_1}.
\end{align*}
Now, if $p_1<-1$, then it follows that $f$ is a product of at least $|n|$ Dehn twists. And, if $p_1=-1$, then
\begin{align*}
f=T_{\beta}^{n-1}T_{\alpha}^{-2}T_{\beta}^{-1}T_{\alpha}^{q_1}=T_{\beta}^{n-1}T_{T_{\alpha}^{-2}(\beta)}^{-1}T_{\alpha}^{q_1-2}
\end{align*}
is a product of at least $|n|$ Dehn twists for any value of $q_1$.
\par 
For the induction hypothesis, we assume that $m\geq 2$ and
$$T_{\beta}^n (T_{\beta}T_{\alpha}^2T_{\beta})^{p_1}T_{\alpha}^{q_1}(T_{\beta}T_{\alpha}^2T_{\beta})^{p_2}T_{\alpha}^{q_2}\dots (T_{\beta}T_{\alpha}^2T_{\beta})^{p_{m-1}}T_{\alpha}^{q_{m-1}}$$
is a product of at least $|n|$ Dehn twists for any $p_i$ and $q_i$.
\par
Observe that, if $p_1> 0$, then there are no cancellations of any Dehn twists about the curve $\beta$ in the product $T_{\beta}^n(T_{\beta}T_{\alpha}^2T_{\beta})^{p_1}$ appearing in the expression of $f$. Thus, $f$ is a product of at least $|n|$ Dehn twists. If $p_1=0$, then we have
$$
f=T_{\beta}^n T_{\alpha}^{q_1}(T_{\beta}T_{\alpha}^2T_{\beta})^{p_2}T_{\alpha}^{q_2}\dots (T_{\beta}T_{\alpha}^2T_{\beta})^{p_m}T_{\alpha}^{q_m}.
$$
If $q_1\neq 0$, then $f$ is a product of at least $|n|$ Dehn twists. If $q_1=0$, then
$$
f=T_{\beta}^n(T_{\beta}T_{\alpha}^2T_{\beta})^{p_2}T_{\alpha}^{q_2}\dots (T_{\beta}T_{\alpha}^2T_{\beta})^{p_m}T_{\alpha}^{q_m},
$$ 
and it follows from the induction hypothesis that $f$ is a product of at least $|n|$ Dehn twists. Now, we assume that $p_1<0$. We have
\begin{align*}
f&=T_{\beta}^n(T_{\beta}T_{\alpha}^2T_{\beta})^{-1}(T_{\beta}T_{\alpha}^2T_{\beta})^{p_1+1}T_{\alpha}^{q_1}(T_{\beta}T_{\alpha}^2T_{\beta})^{p_2}T_{\alpha}^{q_2}\dots (T_{\beta}T_{\alpha}^2T_{\beta})^{p_m}T_{\alpha}^{q_m}\\
&=T_{\beta}^{n-1}T_{\alpha}^{-2}T_{\beta}^{-1}(T_{\beta}^{-1}T_{\alpha}^{-2}T_{\beta}^{-1})^{-p_1-1}T_{\alpha}^{q_1}(T_{\beta}T_{\alpha}^2T_{\beta})^{p_2}T_{\alpha}^{q_2}\dots (T_{\beta}T_{\alpha}^2T_{\beta})^{p_m}T_{\alpha}^{q_m}.
\end{align*}
If $p_1<-1$, then it follows that $f$ is a product of at least $|n|$ Dehn twists. If $p_1=-1$, then we have
$$
f=T_{\beta}^{n-1}T_{\alpha}^{-2}T_{\beta}^{-1}T_{\alpha}^{q_1}(T_{\beta}T_{\alpha}^2T_{\beta})^{p_2}T_{\alpha}^{q_2}\dots (T_{\beta}T_{\alpha}^2T_{\beta})^{p_m}T_{\alpha}^{q_m}.
$$
If $q_1\neq 0$, then $f$ is a product of at least $|n|$ Dehn twists. If $q_1=0$, then
\begin{align*}
f&=T_{\beta}^{n-1}T_{\alpha}^{-2}T_{\beta}^{-1}(T_{\beta}T_{\alpha}^2T_{\beta})^{p_2}T_{\alpha}^{q_2}\dots (T_{\beta}T_{\alpha}^2T_{\beta})^{p_m}T_{\alpha}^{q_m}\\
&=T_{\beta}^{n}(T_{\beta}T_{\alpha}^2T_{\beta})^{p_2-1}T_{\alpha}^{q_2}\dots (T_{\beta}T_{\alpha}^2T_{\beta})^{p_m}T_{\alpha}^{q_m},
\end{align*}
and it follows from the induction hypothesis that $f$ is a product of at least $|n|$ Dehn twists.
\end{proof}
\color{black}

\begin{lem} \label{lem:comparison_of_distance}
Let $g \ge 1$, and $\alpha$ and $\beta$ be non-separating simple closed curves on $S_g$ such that $i(\alpha,\beta)=1$.
\begin{enumerate}
\item If $n$ is a non-zero integer, then $d(\alpha,T_{\beta}^n(\alpha))=|n|$ and $d(\alpha,\beta)=1$.
\item If there exists a non-trivial simple closed curve in the complement of $\alpha\cup \beta$, then $d'(\alpha,\beta)=2$.
\end{enumerate}
\end{lem}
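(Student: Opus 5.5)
For part (1), I will prove an upper and a lower bound on $d(\alpha,T_\beta^n(\alpha))$ separately. The upper bound is immediate from the definition of the quandle metric. Since $T_\beta^n(\alpha)=(\cdots((\alpha\ast^{\pm1}\beta)\ast^{\pm1}\beta)\cdots)\ast^{\pm1}\beta$ with $|n|$ factors and the sign of $n$, we get $d(\alpha,T_\beta^n(\alpha))\le |n|$.

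For the lower bound, suppose $T_\beta^n(\alpha)=(\cdots(\alpha\ast^{\epsilon_1}x_1)\cdots)\ast^{\epsilon_k}x_k$ with $x_i\in\D_g$. Because $\D_g$ sits inside $\Conj(\M_g)$, each step is the action of $T_{x_i}^{\epsilon_i}$. Hence $T_\beta^n(\alpha)=F(\alpha)$, where $F=T_{x_k}^{\epsilon_k}\cdots T_{x_1}^{\epsilon_1}$ is a product of $k$ Dehn twists. Then $h:=T_\beta^{-n}F$ lies in $\mathrm{Stab}_{\M_g}(\alpha)$, so $F=T_\beta^n h$. Lemma~\ref{lower bound dehn twists} now says that $F$ is a product of at least $|n|$ Dehn twists.

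One point needs care here. The lemma should be read as bounding the minimal number of twist factors over all expressions of $F$, so that it applies to this particular expression and yields $k\ge|n|$. That is the intended content of the lemma, and I will invoke it in that form.

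One small subtlety concerns the $x_i$, which might be arbitrary curves, including separating ones or $\textbf{0}$. Twisting along $\textbf{0}$ is the identity, so such steps can simply be dropped; this only shortens the word. The remaining factors are genuine Dehn twists, so the lemma applies.

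The claim $d(\alpha,\beta)=1$ follows by the same method. First, $\alpha$ and $\beta$ lie in the same component, since both are non-separating. Second, because $i(\alpha,\beta)=1$, the braid relation gives $T_\alpha T_\beta(\alpha)=\beta$. I will instead look for a single curve $x$ with $\alpha\ast x=\beta$. With the right orientation conventions, $T_\alpha T_\beta(\alpha)=\beta$ gives $T_\beta(\alpha)=T_\alpha^{-1}(\beta)$, and hence $\beta=T_{\alpha}T_\beta(\alpha)=T_{T_\alpha(\beta)}(\alpha)$ by conjugation (\eqref{conjugation by mcg}). So $x=T_\alpha(\beta)$ works, giving $d\le1$. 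We also have $d\ge1$ because $\alpha\ne\beta$.

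For part (2), note that $\alpha\neq\beta$ and $i(\alpha,\beta)=1\neq0$, so they are not adjacent in the curve complex (here $g\ge2$ is implicit). This gives $d'\ge2$. For the upper bound, a non-trivial simple closed curve $\gamma$ in the complement of $\alpha\cup\beta$ has $i(\gamma,\alpha)=i(\gamma,\beta)=0$. Thus $\alpha-\gamma-\beta$ is a path of length $2$.

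The only delicate point overall is the lower bound in (1), and it relies entirely on Lemma~\ref{lower bound dehn twists}. The main step I must verify is that the quandle-word translation produces exactly the form $T_\beta^n h$ with $h\in\mathrm{Stab}(\alpha)$, and that the order of composition matches the right-action convention used in the paper.
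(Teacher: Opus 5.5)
Your argument matches the paper's step for step, and the upper bound, the conversion of a length-$k$ word into $F=T_{x_k}^{\epsilon_k}\cdots T_{x_1}^{\epsilon_1}=T_\beta^n h$ with $h\in\mathrm{Stab}_{\M_g}(\alpha)$, the braid-relation argument for $d(\alpha,\beta)=1$, and part (2) are all fine. The gap is the step you flagged, and it cannot be filled. In the only reading that yields $k\ge|n|$, namely a lower bound on the minimal number of twist factors of $F$, Lemma~\ref{lower bound dehn twists} is false. So is the equality $d(\alpha,T_\beta^n(\alpha))=|n|$ for every $|n|\ge 3$.

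To see this, take the identity you use for $d(\alpha,\beta)=1$: $x\ast(y\ast x)=T_xT_y(x)=y$ whenever $i(x,y)=1$. It applies equally to the pair $x=\beta$, $y=T_\beta^n(\alpha)$, because $i(\beta,T_\beta^n(\alpha))=i(T_\beta^{-n}(\beta),\alpha)=1$. Hence
\[
T_\beta^n(\alpha)=\bigl(\alpha\ast(\beta\ast\alpha)\bigr)\ast\bigl(T_\beta^n(\alpha)\ast\beta\bigr),
\]
so $d(\alpha,T_\beta^n(\alpha))\le 2$ for all $n$ and all $g\ge 1$. Equivalently, $F=T_{T_\beta^n(\alpha)\ast\beta}\,T_{\beta\ast\alpha}$ is a product of two Dehn twists lying in $T_\beta^n\,\mathrm{Stab}_{\M_g}(\alpha)$.

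A direct check on the torus: use $T_a(x)=x+(a_1x_2-a_2x_1)\,a$ on $\Z^2$, with $\alpha=(1,0)$ and $\beta=(0,1)$. Then $T_{(1,2)}T_{(1,1)}(\alpha)=(-1,-3)$, which is $T_\beta^{-3}(\alpha)=(1,3)$ as an unoriented curve. The paper's own inequality $d\le d'$ for $g=1$ gives the same bound, since $\beta$ is adjacent in the curve complex of $S_1$ to both $\alpha$ and $T_\beta^n(\alpha)$.

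The flaw in Lemma~\ref{lower bound dehn twists} is in its induction. It only checks that no $T_\beta$-letters cancel inside the specific words $T_\beta^n(T_\beta T_\alpha^2T_\beta)^{p_1}T_\alpha^{q_1}\cdots$. Since $\M_g$ is not free on Dehn twists, that says nothing about how short another expression of the same element can be.

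What survives:
\begin{enumerate}
\item $d(\alpha,\beta)=d(\alpha,T_\beta^{\pm1}(\alpha))=1$;
\item part (2);
\item $d(\alpha,T_\beta^n(\alpha))=2$ whenever $|n|\ge 2$ is not a perfect square, because a single move $x\mapsto T_\gamma^{\pm1}(x)$ produces a curve with $i(x,T_\gamma^{\pm1}(x))=i(x,\gamma)^2$, whereas $i(\alpha,T_\beta^n(\alpha))=|n|$.
\end{enumerate}
So rather than trying to justify the lower bound, part (1) of the statement itself has to be corrected. The paper's proof fails at exactly the same point, which also invalidates the use of $d(\alpha,T_\beta^3(\alpha))=3$ in Proposition~\ref{prop:comparison_of_distance}.
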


\begin{proof}
Clearly, $d(\alpha,T_{\beta}^n(\alpha))\le |n|$. Let $f\in \M_g$ such that $f(\alpha)=T_{\beta}^n(\alpha)$. Then $T_{\beta}^{-n}f \in \mathrm{Stab}_{\M_g}(\alpha)$, and hence $f=T_{\beta}^n h$ for some $h\in \mathrm{Stab}_{\M_g}(\alpha)$. By Lemma \ref{lower bound dehn twists}, $f$ is a product of at least $|n|$ Dehn twists, and hence $d(\alpha,T_{\beta}^n(\alpha))=|n|$. Further, since $i(\alpha,\beta)=1$, the braid relation gives $T_{T_{\alpha}(\beta)}(\alpha)=T_{\alpha}T_{\beta}T_{\alpha}^{-1}(\alpha)=T_{\alpha}T_{\beta}(\alpha)=\beta$, which shows that $d(\alpha,\beta)=1$.
\par
Let $\gamma$ be a non-trivial simple closed curve in the complement of $\alpha\cup \beta$. It follows that $g\geq 2$ and $i(\alpha,\gamma)=0=i(\beta,\gamma)$. Since $i(\alpha,\beta)\neq 0$, we have $d'(\alpha,\beta)\geq 2$. But, $\{\alpha,\gamma,\beta\}$ is an edge path in $\D_g$ between $\alpha$ and $\beta$ of length $2$, and hence $d'(\alpha,\beta)=2$.
\end{proof}

Given two metrics $d_1,d_2$ on a set $X$, we write $d_1\leq d_2$ if $d_1(x,y)\leq d_2(x,y)$ all $x,y\in X$.

\begin{prop}\label{prop:comparison_of_distance}
If $g \ge 2$, then neither $d' \le d$ nor  $d \le d'$. Further, if $g=1$, then $d< d'$. 
\end{prop}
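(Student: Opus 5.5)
The whole argument rests on Lemma~\ref{lem:comparison_of_distance}, which supplies explicit pairs of curves whose two distances we know exactly. The metric $d$ is only defined on connected components, so we restrict to $\D_g^{ns}$. That is one component, and any two non-separating curves lie in it.

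\emph{Case $g\ge 2$: showing $d'\le d$ fails.} Take non-separating $\alpha,\beta$ with $i(\alpha,\beta)=1$. A regular neighbourhood of $\alpha\cup\beta$ is a one-holed torus. Its complement has genus $g-1\ge 1$, so it contains an essential non-separating simple closed curve $\gamma$. By Lemma~\ref{lem:comparison_of_distance}(2) we get $d'(\alpha,\beta)=2$, while part (1) gives $d(\alpha,\beta)=1$. Hence $d'(\alpha,\beta)>d(\alpha,\beta)$.

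\emph{Case $g\ge 2$: showing $d\le d'$ fails.} With $\alpha,\beta,\gamma$ as above, set $\alpha_n=T_\beta^n(\alpha)$ for $|n|\ge 3$. Part (1) gives $d(\alpha,\alpha_n)=|n|$. For the curve complex distance, note that $T_\beta^n$ is supported near $\beta$ and therefore fixes $\gamma$. It follows that $i(\alpha_n,\gamma)=i(T_\beta^n\alpha,T_\beta^n\gamma)=i(\alpha,\gamma)=0$. So $\alpha,\gamma,\alpha_n$ is an edge path and $d'(\alpha,\alpha_n)\le 2<|n|=d(\alpha,\alpha_n)$. The point is that $d'$ stays bounded along this family while $d$ grows without bound.

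\emph{Case $g=1$.} Here edges of the curve complex join curves with $i=1$. We show $d(x,y)\le d'(x,y)$ for all $x,y$. Take a geodesic edge path $x=\gamma_0,\gamma_1,\dots,\gamma_k=y$ in the curve complex, with $i(\gamma_{j},\gamma_{j+1})=1$. Part (1) gives $d(\gamma_j,\gamma_{j+1})=1$, and the triangle inequality for $d$ then yields $d(x,y)\le k=d'(x,y)$. Strictness means exhibiting a pair with $d<d'$. We use $\alpha$ and $T_\beta^2(\alpha)$. Part (1) gives $d=2$. For $d'$, compute $i(\alpha,T_\beta^2\alpha)=2\,i(\alpha,\beta)^2=2$, so the two curves are distinct and not adjacent, giving $d'\ge 2$. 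Identify curves with slopes, $\alpha=1/0$ and $\beta=0/1$. Then $T_\beta^2\alpha$ has slope $1/2$, up to sign convention. The Farey graph distance from $1/0$ to $1/2$ is $2$, so this pair gives $d'=2$ as well and is not strict.

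The main obstacle is therefore the strictness claim for $g=1$. I would interpret ``$d<d'$'' as $d\le d'$ with $d\neq d'$ and look for a better pair. The candidate I would try first is $x=1/0$, $y=2/5$. Its Farey distance is at least $3$, since $2/5=[0;2,2]$ has two partial quotients beyond the first. On the other hand $y=T_{\delta}(x)$ for a curve $\delta$ with $i(x,\delta)$ larger than $1$. If such a single twist achieves it, then $d(x,y)=1$ and the pair is strict. The step requiring care is confirming the Farey distance lower bound via the continued fraction length.
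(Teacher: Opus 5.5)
Your argument that $d'\le d$ fails for $g\ge 2$ (via $d(\alpha,\beta)=1<2=d'(\alpha,\beta)$) is correct, as is your proof of $d\le d'$ for $g=1$; both match the paper. The other two steps do not go through.

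\textbf{The case $g\ge 2$, failure of $d\le d'$.} The family $\alpha_n=T_\beta^n(\alpha)$ cannot show this, because $d(\alpha,\alpha_n)$ does not grow. Since $i(\beta,\alpha_n)=i(T_\beta^{-n}(\beta),\alpha)=i(\beta,\alpha)=1$, the braid-relation argument gives $d(\beta,\alpha_n)=1$. The triangle inequality then yields $d(\alpha,\alpha_n)\le d(\alpha,\beta)+d(\beta,\alpha_n)=2$ for every $n$. One can even get distance $1$: with $\delta=T_\beta^2(\alpha)$, a computation on slopes in the one-holed torus around $\alpha\cup\beta$ gives $T_\delta^{-1}(\alpha)=T_\beta^4(\alpha)$, so $d(\alpha,\alpha_4)=1$. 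Hence the equality $d(\alpha,T_\beta^n(\alpha))=|n|$ of Lemma~\ref{lem:comparison_of_distance}(1), which you rely on, is false for $|n|\ge 3$. The paper's proof uses the pair $(\alpha,T_\beta^3(\alpha))$ and has the same defect; for that pair in fact $d=d'=2$.

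A witness that does work is a pair of distinct disjoint non-separating curves $x,y$. Then $d'(x,y)=1$. One quandle move sends $x$ to $T_c^{\pm1}(x)$, and $i(T_c^{\pm1}(x),x)=i(x,c)^2$. So the result is either $x$ itself (when $i(x,c)=0$) or a curve meeting $x$. Therefore $d(x,y)\ge 2>d'(x,y)$.

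\textbf{The case $g=1$, strictness.} You leave this step open, and your candidate pair fails. Since $i(1/0,2/5)=5$ is not a perfect square, $2/5\neq T_\delta^{\pm1}(1/0)$ for every $\delta$. A two-step path is also impossible. An intermediate slope $a/b$ would need both $|b|$ and $|5a-2b|$ to be perfect squares, and a check modulo $5$ rules this out for coprime $a,b$. So in fact $d(1/0,2/5)=3=d'(1/0,2/5)$.

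The fix is to choose the twist first rather than the target. Pick $\delta$ with $i(x,\delta)=2$ (e.g.\ $x=1/0$, $\delta=1/2$) and set $y=T_\delta(x)$. Then $d(x,y)=1$. On the other hand, $y\neq x$ and $i(x,y)=i(x,\delta)^2=4\neq 1$, so $d'(x,y)\ge 2$. This is exactly the paper's pair $\beta$, $\delta=T_\gamma(\beta)$ with $i(\beta,\delta)=4$.
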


\begin{proof}
First assume that $g\geq 2$. Let $\alpha$ and $\beta$ be non-separating simple closed curves on $S_g$ such that $i(\alpha,\beta)=1$. By the change of coordinate principle, there exists a simple closed curve in the complement of $\alpha\cup T_{\beta}^n(\alpha)$ for each non-zero integer $n$. By Lemma~\ref{lem:comparison_of_distance}, we have $d'(\alpha,T_{\beta}^3(\alpha))=2 < 3=d(\alpha,T_{\beta}^3(\alpha))$, and hence $d\leq d'$ does not hold. Again, there exists a non-trivial simple closed curve in the complement of $\alpha\cup \beta$. By Lemma~\ref{lem:comparison_of_distance}, we have $d(\alpha,\beta)=1< 2=d'(\alpha,\beta)$, and hence $d'\leq d$ does not hold.
\par
Now, assume that $g=1$. Let $\alpha,\beta \in \D_1$ and $\{\alpha=\gamma_0,\gamma_1,\dots,\gamma_n=\beta \}$ be a $d'$ minimizing path from $\alpha$ to $\beta$, that is, $d'(\alpha,\beta)=n$. For each $1\leq j \leq n$, since $i(\gamma_{j-1},\gamma_j)=1$, the braid relation gives $T_{T_{\gamma_{j-1}}(\gamma_j)}(\gamma_{j-1})=T_{\gamma_{j-1}}T_{\gamma_j}T_{\gamma_{j-1}}^{-1}(\gamma_{j-1})=T_{\gamma_{j-1}}T_{\gamma_j}(\gamma_{j-1})=\gamma_j$. Thus, we have $\beta=\gamma_n=T_{T_{\gamma_{n-1}}(\gamma_n)}T_{T_{\gamma_{n-2}}(\gamma_{n-1})}\cdots T_{T_{\gamma_1}(\gamma_2)}T_{T_{\gamma_0}(\gamma_1)}(\alpha)$. This shows that $d(\alpha,\beta)\leq n= d'(\alpha,\beta)$, and hence $d\leq d'$.

To show that $d< d'$, consider the curves $\beta$, $\gamma$ and $\delta$ on the torus as shown in Figure~\ref{fig:four_curves_on_torus}.
\begin{figure}[t]
\centering
\includegraphics[scale=.8]{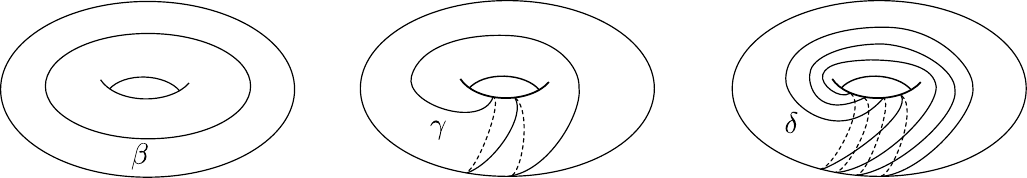}
\caption{Curves on the torus showing $d<d'$.}
\label{fig:four_curves_on_torus}
\end{figure}
It can be seen that $\delta=T_{\gamma}(\beta)$, and hence $d(\beta,\delta)=1$. Since $i(\beta,\delta)=4$, we have $d'(\beta,\delta)\geq 2$. Hence, the inequality is strict, that is, $d<d'$.\color{black}

\end{proof}

\begin{cor}
Let $\alpha$ and $\beta$ be non-separating simple closed curves on the torus such that $i(\alpha,\beta)=1$. If $n$ is a non-zero integer, then $d'(\alpha,T_{\beta}^n(\alpha))=|n|$.
\end{cor}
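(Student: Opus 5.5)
The plan is to squeeze $d'(\alpha,T_\beta^n(\alpha))$ between two bounds that both equal $|n|$. The lower bound comes from the quandle metric; the upper bound comes from an explicit path in the curve complex of the torus.

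For the lower bound, Proposition~\ref{prop:comparison_of_distance} gives $d\le d'$ on $\D_1$ (the argument there shows $d\le d'$ directly, and the strict inequality is only an additional remark). Lemma~\ref{lem:comparison_of_distance}(1) gives $d(\alpha,T_\beta^n(\alpha))=|n|$. Combining these,
$$|n| = d(\alpha,T_\beta^n(\alpha)) \le d'(\alpha,T_\beta^n(\alpha)).$$
Here $\alpha$ and $T_\beta^n(\alpha)$ lie in the connected component $\D_1^{ns}$, so $d$ is defined.

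For the upper bound, I will build an edge path of length $|n|$ in the curve complex of $S_1$, whose edges join curves with geometric intersection number one. By the change of coordinates principle, take $\alpha=(1,0)$ and $\beta=(0,1)$ as in Figure~\ref{fig:nonsep_curves_s2}. Then, up to the sign convention, $T_\beta^k(\alpha)=(1,k)$. On the torus, $i((p,q),(r,s))=|ps-qr|$, so
$$i\big((1,k),(1,k+1)\big)=1 .$$
Hence for $n>0$ the sequence $\alpha=T_\beta^0(\alpha),\,T_\beta(\alpha),\dots,T_\beta^n(\alpha)$ is an edge path of length $n$. A coordinate-free way to see the same thing is that $i(T_\beta^k(\alpha),T_\beta^{k+1}(\alpha)) = i(\alpha,T_\beta(\alpha)) = i(\alpha,\beta)^2=1$, using invariance of $i$ under mapping classes and the standard formula $i(T_\beta^m(\alpha),\alpha)=|m|\,i(\alpha,\beta)^2$. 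The case $n<0$ is symmetric. This gives $d'\le |n|$, and therefore $d'(\alpha,T_\beta^n(\alpha))=|n|$.

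The only real content sits in the lower bound, specifically in Lemma~\ref{lower bound dehn twists}, which we take as given. The one point that needs checking is that the inequality $d\le d'$ from Proposition~\ref{prop:comparison_of_distance} holds for every pair of curves, including this one. Its proof converts any $d'$-path of length $n$ into $n$ quandle operations via the braid relation, so it applies to arbitrary pairs and no obstacle is expected.
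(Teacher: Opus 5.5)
You reproduce the paper's squeeze argument, and your upper bound is fine. The lower bound, however, cannot be repaired: the statement is false for $|n|\ge 3$.

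Since $T_\beta$ fixes $\beta$, we have $i(\beta,T_\beta^n(\alpha))=i(T_\beta^{-n}(\beta),\alpha)=i(\beta,\alpha)=1$. So $\beta$ is adjacent in the curve complex of the torus to both $\alpha$ and $T_\beta^n(\alpha)$. Thus $\alpha,\beta,T_\beta^n(\alpha)$ is an edge path of length $2$, and $d'(\alpha,T_\beta^n(\alpha))\le 2$ for every $n$. Your own formula gives $i(\alpha,T_\beta^n(\alpha))=|n|$, so the true value is $d'(\alpha,T_\beta^n(\alpha))=\min\{|n|,2\}$.

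The step that fails is the one you took as given, namely $d(\alpha,T_\beta^n(\alpha))=|n|$ from Lemma~\ref{lem:comparison_of_distance}(1). The inequality $d\le d'$ that you singled out for checking is correct. Using $T_aT_b(a)=b$ for $i(a,b)=1$ twice (first with $(a,b)=(\alpha,\beta)$, then with $(a,b)=(\beta,T_\beta^n(\alpha))$) gives
$$T_\beta^n(\alpha)=\bigl(\alpha * T_\alpha(\beta)\bigr) * T_\beta^{n+1}(\alpha).$$
Hence $d(\alpha,T_\beta^n(\alpha))\le 2$, which also follows from $d\le d'\le 2$.

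Equivalently, the mapping class $f=T_{T_\beta^{n+1}(\alpha)}T_{T_\alpha(\beta)}$ sends $\alpha$ to $T_\beta^n(\alpha)$. So $f=T_\beta^n h$ with $h\in\mathrm{Stab}_{\M_1}(\alpha)$, yet $f$ is a product of only two Dehn twists. This contradicts Lemma~\ref{lower bound dehn twists} for $|n|\ge 3$. The ``no cancellation'' reasoning in that lemma only tracks one particular word in $T_\alpha$ and $T_\beta$. The quandle metric allows twists about arbitrary curves, so that reasoning yields no lower bound.

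What your argument actually establishes is $d'(\alpha,T_\beta^n(\alpha))\le |n|$ in general, with equality only for $|n|\le 2$.
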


\begin{proof}
Without loss of generality assume that $n>0$. On the torus, we have $i(T_{\beta}^{n-1}(\alpha),T_{\beta}^n(\alpha))=1$, and hence $\{\alpha,T_{\beta}(\alpha),T_{\beta}^2(\alpha),\dots,T_{\beta}^n(\alpha)\}$ is an edge path of length $n$ in $\D_1$. Thus, we have $d'(\alpha,T_{\beta}^n(\alpha))\leq n$. By Proposition~\ref{prop:comparison_of_distance} and Lemma~\ref{lem:comparison_of_distance}, we get $n=d(\alpha,T_{\beta}^n(\alpha))\leq d'(\alpha,T_{\beta}^n(\alpha))$, and hence $d'(\alpha,T_{\beta}^n(\alpha))=n$.
\end{proof}
\medskip

\section{Bounded cohomology of Dehn quandles and Gromov Mapping Theorem}\label{section Bounded cohomology of Dehn quandles}
In this section, we investigate the second bounded cohomology of Dehn quandles of surfaces discussed in Section \ref{section quandles on surfaces}, and establish an analogue of the Gromov Mapping Theorem for quandle extensions. 

\subsection{Bounded cohomology of Dehn quandles of surfaces}
It is known due to Bestvina and Fujiwara \cite[Theorem 12]{MR1914565} that for any subgroup $G$ of the mapping class group $\mcg_g$ which is not virtually abelian, the kernel of the comparison homomorphism $c^2:\Ha_b^2(G;\R)\to \Ha^2(G;\R)$ is infinite-dimensional. Since $\mcg_g$ is not virtually abelian, it follows that $\Ha_b^2(\mcg_g;\R)$ is also infinite-dimensional. In a follow-up paper, they proved the following generalization \cite[Theorem 1.1]{MR2332668}. 

\begin{theorem} \label{thm:bounded_morphisms_mcg}
Let $S$ be a compact surface such that $\M(S)$ is not virtually abelian. Then $\mathcal{HG}(\M(S);\R)$ is infinite-dimensional. Moreover, for every finite collection $G_1,\dots, G_n$
of cyclic subgroups of $\M(S)$, there is an infinite-dimensional subspace of $\mathcal{HG}(\M(S);\R)$ such that any element $\phi$ of this subspace satisfies:
\begin{enumerate}
\item $\phi$ is bounded on each $G_i$, and
\item $\phi$ is bounded on the stabilizer $\mathrm{Stab}_{\M(S)}(\alpha)$ of each essential simple closed curve $\alpha$ on $S$.
\end{enumerate}
\end{theorem}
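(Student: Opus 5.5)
This statement is Bestvina--Fujiwara's theorem \cite[Theorem 1.1]{MR2332668}, which the paper quotes rather than proves. A self-contained argument would follow their construction of quasimorphisms from actions on hyperbolic spaces. I would use the action of $\M(S)$ on the curve complex of $S$, which is Gromov hyperbolic by Masur--Minsky. The key input is weak proper discontinuity (WPD) of pseudo-Anosov elements on this complex.

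\emph{Step one: build many quasimorphisms.} Because $\M(S)$ is not virtually abelian, it contains independent pseudo-Anosov elements $f_1,f_2,\dots$. Independent means their quasi-axes in the curve complex have pairwise distinct endpoints at infinity and are not conjugate to one another's inverses. For a pseudo-Anosov $f$ with quasi-axis segment $w$, I would form the Brooks/Fujiwara counting function $h_w(x)=c_w(x)-c_{w^{-1}}(x)$. Here $c_w(x)$ counts the maximal number of disjoint translates of $w$ appearing, up to a fixed error, along a geodesic from a basepoint $x_0$ to $x(x_0)$. Hyperbolicity gives a uniform defect bound. Homogenizing then yields $\phi_f\in\mathcal{HG}(\M(S);\R)$.

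\emph{Step two: linear independence.} WPD implies that a high power of $f_i$ has unbounded $\phi$-value, while every $f_j$ that is not equivalent to $f_i$ has bounded value. Evaluating on suitable powers then shows that infinitely many of the $\phi_{f_i}$ are linearly independent. This gives an infinite-dimensional subspace.

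\emph{Step three: boundedness conditions (1) and (2).} The cyclic groups $G_i$ and the curve stabilizers $\mathrm{Stab}_{\M(S)}(\alpha)$ act with bounded orbits or with ``small'' quasi-axes on the curve complex. A finite-order generator, or any element of $\mathrm{Stab}_{\M(S)}(\alpha)$, fixes the vertex $\alpha$ up to bounded displacement. A pseudo-Anosov generator of some $G_i$ has a single axis. I therefore choose the $f_j$ independent of the finitely many pseudo-Anosov generators among the $G_i$. I also choose the segments $w$ long enough that their translates cannot fellow-travel geodesics between points near a fixed vertex. The quasimorphisms are then bounded on these subgroups.

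The main obstacle is step three for $\mathrm{Stab}_{\M(S)}(\alpha)$. These stabilizers are not of bounded size, and a geodesic from $x_0$ to $g x_0$ with $g\in\mathrm{Stab}(\alpha)$ passes within distance $d(x_0,\alpha)+O(1)$ of $\alpha$. Closing this gap needs a uniform choice of $w$ that works across all infinitely many curves $\alpha$ at once. I would handle this by quasi-convexity: such geodesics lie in two short neighbourhoods, so each can contain at most a bounded number of copies of a long segment $w$.
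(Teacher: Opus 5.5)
The paper does not prove this statement; it quotes it from Bestvina--Fujiwara \cite[Theorem 1.1]{MR2332668}. Your outline (curve complex, Fujiwara counting quasimorphisms, WPD) is indeed their strategy, but the step meant to produce an infinite-dimensional space has a genuine gap.

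First, your notion of ``independent'' (distinct endpoints, not conjugate to each other's inverses) allows $f_2=hf_1h^{-1}$. Since $c_w$ counts all $\M(S)$-translates of $w$, the segment $hw$ on the axis of $hf_1h^{-1}$ gives exactly the same quasimorphism as $w$, so no new dimension appears.

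Second, the claim that WPD makes $\phi_{f_i}$ unbounded on powers of $f_i$ is false. Every pseudo-Anosov satisfies WPD, yet if $f_i$ is conjugate to $f_i^{-1}$ then $\phi(f_i)=\phi(f_i^{-1})=-\phi(f_i)$ for every homogeneous quasimorphism, so $\phi(f_i)=0$. For example, the Anosov map $\bigl(\begin{smallmatrix}2&1\\1&1\end{smallmatrix}\bigr)$ of the torus is conjugate to its inverse by $\bigl(\begin{smallmatrix}0&-1\\1&0\end{smallmatrix}\bigr)$.

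What is missing is Bestvina--Fujiwara's equivalence relation: $f\sim f'$ if arbitrarily long segments of a quasi-axis of $f$ can be moved by group elements into a fixed neighbourhood of a quasi-axis of $f'$, preserving orientation. Your Step two works only for pseudo-Anosovs with $f_i\not\sim f_i^{-1}$ and $f_i\not\sim f_j^{\pm1}$ for $i\neq j$. For (1) you also need $f_i\not\sim g^{\pm1}$ for each pseudo-Anosov generator $g$ of some $G_k$, with $w$ chosen long enough relative to these. Producing infinitely many such $f_i$ is the real content, and this is where WPD enters. Together with the fact that $\M(S)$ is not virtually cyclic, it yields two independent loxodromics that are not equivalent, and the $f_i$ are then built as suitable words in large powers of these two elements.

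Your treatment of (2), by contrast, misplaces the difficulty. Condition (2) asks for a bound on $\mathrm{Stab}_{\M(S)}(\alpha)$ for each $\alpha$ separately, and that bound may depend on $\alpha$. An element $g\in\mathrm{Stab}_{\M(S)}(\alpha)$ fixes the vertex $\alpha$, so $d(x_0,gx_0)\le 2d(x_0,\alpha)$. Also $|h_w(g)|$ is at most a constant times $d(x_0,gx_0)$; with the standard normalization, $0\le c_w(x,y)\le d(x,y)$. Hence $h_w$ is bounded on the stabilizer, and so is its homogenization, which differs from $h_w$ by at most the defect. Equivalently, moving the basepoint to $\alpha$ shows the homogenization vanishes on every element with a fixed vertex.

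So no choice of $w$ uniform in $\alpha$ is needed. Your proposed remedy could not supply one anyway: the neighbourhoods of $[x_0,\alpha]$ and $g[x_0,\alpha]$ are not short, because $d(x_0,\alpha)$ is unbounded as $\alpha$ varies. The same fixed-vertex argument handles reducible generators of the $G_k$, since a power of a reducible element fixes a curve, and periodic generators are trivial for homogeneous quasimorphisms. Condition (1) therefore reduces entirely to the non-equivalence requirement above.
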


The following result is an immediate extension of~\cite[Theorem 4.2]{arXiv:2502.04069}, and we leave the proof to the reader.

\begin{theorem}\label{thm:sufficient_condition2}
Let $X$ be a quandle such that $X=\sqcup_{i\in I}(G/H_i,z_i)$, where $G$ is a group, $\{z_i\mid i\in I\}$ is a set of elements of $G$, $H_i$ is a subgroup of $C_G(z_i)$ for each $i \in I$, and $J$ is a finite subset of $I$. Suppose that the subspace $\mathcal{HG}^{H_{i_0}\cup\{z_i \, \mid\, i\in I\setminus J\}}(G;\R)$ of homogeneous group quasimorphisms of $G$ that vanishes on $H_{i_0}\cup\{z_i\mid i\in I\setminus J\}$ is infinite-dimensional for some $i_0\in I$. Then $H_b^2(X;\R)$ is infinite-dimensional.
\end{theorem}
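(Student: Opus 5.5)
The plan is to follow the strategy of \cite[Theorem 4.2]{arXiv:2502.04069}: produce an infinite-dimensional space of homogeneous quasimorphisms on $X$, and show that they survive in $H^2_b(X;\R)$ through their coboundaries. Fix $i_0$ as in the hypothesis, and let $\phi$ range over $V:=\mathcal{HG}^{H_{i_0}\cup\{z_i \mid i\in I\setminus J\}}(G;\R)$.

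\textbf{Step 1: the cochain.} For $\phi \in V$, define $f_\phi\in C^1(X;\R)$ on each component $G/H_i$ by
\[
f_\phi(H_i x)=\phi(x)-\phi(z_i x)+c_i,
\]
with a suitable choice of constants $c_i$. The shape may need adjusting to $\phi(x^{-1}z_ix)$-type expressions, as in the cited paper. The essential requirement is that $f_\phi$ be well defined on cosets; this should hold up to a bounded error because $H_i\le C_G(z_i)$ and $\phi$ is a quasimorphism.

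\textbf{Step 2: the cocycle.} Compute $\delta^1 f_\phi(H_ix,H_jy)$ using the operation $H_ix\ast H_jy=H_iz_ixy^{-1}z_j^{-1}y$. The defect inequality for $\phi$ bounds this by a constant multiple of $D(\phi)$, plus terms of the form $\phi(z_j)$ (note $\phi(y^{-1}z_j^{-1}y)=-\phi(z_j)$ by homogeneity and conjugation invariance). For $j\in I\setminus J$ these terms vanish by hypothesis. For $j\in J$ there are only finitely many values $\phi(z_j)$, so they are uniformly bounded. Hence $\delta^1 f_\phi$ is a bounded 2-cocycle, and it vanishes on the diagonal, so it defines a class $[\delta^1 f_\phi]$ in $H^2_b(X;\R)$ lying in $\ker(c^2)$.

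\textbf{Step 3: injectivity.} Show that the linear map $V\to H^2_b(X;\R)$, $\phi\mapsto[\delta^1 f_\phi]$, has finite-dimensional kernel; the conclusion then follows. Suppose $\delta^1 f_\phi=\delta^1 b$ with $b$ bounded. Then $f_\phi-b$ is a 1-cocycle, that is, $f_\phi-b$ is constant on each connected component. Since $J$ is finite, this should force $\phi$ restricted to the orbit $G/H_{i_0}$ to be bounded plus constant. Evaluating along the sequence $x=g^n$ and using homogeneity, one deduces that $\phi$ is determined on $G$ modulo the finitely many parameters $\phi(z_j)$, $j\in J$, together with homomorphism-type data killed by the vanishing conditions. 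In the simplest case this gives $\phi=0$.

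I expect the main obstacle to be precisely this injectivity step, in particular showing that $\phi$ vanishing on $H_{i_0}$ lets one recover all of $\phi$ from the values of $f_\phi$ on the single orbit $G/H_{i_0}$. If a direct argument fails, I would first try the exact argument of \cite[Theorem 4.2]{arXiv:2502.04069} verbatim, with the finite set $J$ absorbed into the bounded error. Failing that, I would pass to a finite-codimension subspace of $V$ on which $\phi(z_j)=0$ for all $j\in J$. This subspace is still infinite-dimensional, and on it the original argument applies unchanged.
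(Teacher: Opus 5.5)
There is a genuine gap in Step 1: the cochain you write down carries no information about $\phi$. By the defect inequality, $|\phi(z_ix)-\phi(z_i)-\phi(x)|\le D(\phi)$ for every representative $x$. So $f_\phi(H_ix)=\phi(x)-\phi(z_ix)+c_i$ stays within $D(\phi)$ of the constant $c_i-\phi(z_i)$ on all of $G/H_i$. Since $H_ix\ast H_jy\in G/H_i$, any function constant on each $G/H_i$ is a $1$-cocycle. Hence $f_\phi$ is a bounded cochain plus a $1$-cocycle, and $[\delta^1 f_\phi]=0$ in $H^2_b(X;\R)$ for every $\phi$. Your variant $\phi(x^{-1}z_ix)$ is worse: it equals $\phi(z_i)$ exactly, because homogeneous quasimorphisms are conjugation invariant. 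Thus your map $V\to H^2_b(X;\R)$ is identically zero, and nothing in Step 3 can rescue it. Your Step 2, which produces terms $\phi(z_j)$, is really the computation for a different cochain.

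The missing idea is to put $\phi$ itself on the single piece $G/H_{i_0}$ and nothing elsewhere. Fix representatives $s(H_{i_0}x)\in H_{i_0}x$, set $f_\phi(H_{i_0}x)=\phi(s(H_{i_0}x))$, and set $f_\phi=0$ on $G/H_i$ for $i\neq i_0$. Vanishing of $\phi$ on $H_{i_0}$ is exactly what gives $|f_\phi(H_{i_0}x)-\phi(x)|\le D(\phi)$. Then $\delta^1 f_\phi(H_{i_0}x,H_jy)=f_\phi(H_{i_0}x)-f_\phi(H_{i_0}z_{i_0}xy^{-1}z_j^{-1}y)$ lies within $4D(\phi)$ of $\phi(z_j)-\phi(z_{i_0})$, and $\delta^1 f_\phi$ vanishes on the other pieces. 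Since $\phi(z_j)=0$ for $j\notin J$ and $J$ is finite, $\delta^1 f_\phi$ is bounded. This is the only place finiteness of $J$ enters, not the injectivity step as you suggest.

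For injectivity, suppose $f_\phi=b+c$ with $b$ bounded and $c$ constant on $\Inn(X)$-orbits. You need $G/H_{i_0}$ to meet only finitely many orbits. Then $\phi$ is bounded on $G$, hence $\phi=0$ by homogeneity, so $\phi\mapsto[\delta^1 f_\phi]$ is injective into $\ker c^2$ and no finite-dimensional-kernel argument is needed. Be careful: in general the pieces $G/H_i$ are subquandles but need not be connected components. For example, $z_i$ trivial gives a trivial quandle. So calling $G/H_{i_0}$ ``the orbit'' needs justification. It does hold for the decomposition of Proposition \ref{prop:quandle_as_coset_quandle}, which is the one used in Theorem \ref{thm:D_g_cohomolofy_id}.

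If $G/H_{i_0}$ meets infinitely many orbits, the conclusion holds for a different reason. Take any bounded function $\chi$ on pairs of components that vanishes on the diagonal. Then $(x,y)\mapsto\chi([x],[y])$ is a $2$-cocycle, and it is the coboundary of a bounded $b$ only if $\chi=0$, since $b(x)-b(S_y^k(x))=k\,\chi([x],[y])$ for all $k$. This gives an infinite-dimensional subspace of $H^2_b(X;\R)$. Your fallback of restricting to $\phi$ with $\phi(z_j)=0$ for $j\in J$ is harmless but addresses neither of these points.
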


We note that Theorem \ref{thm:sufficient_condition2} recovers~\cite[Theorem 4.2]{arXiv:2502.04069} when $I$ is a finite set and $J=I$. Recall that a {\it multitwist} is a product of integral powers of Dehn twists along curves constituting a multicurve.

\begin{lem}\label{lem:vanishing_morphism_multitwist}
Let $g \ge 1$ and $\phi:\M_g\to \R$ be a group quasimorphism such that $\phi$ is bounded on the stabilizer of each simple closed curve on $S_g$. Then the homogeneous group quasimorphism $\tilde{\phi}:\M_g\to \R$ defined by
$$
\tilde{\phi}(g)=\lim_{n\to \infty} \frac{\phi(g^n)}{n}
$$
vanishes on the centralizer of each multitwist.
\end{lem}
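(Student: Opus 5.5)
Let $\tau = T_{\alpha_1}^{n_1}\cdots T_{\alpha_r}^{n_r}$ be a multitwist along a multicurve $\{\alpha_1,\dots,\alpha_r\}$, and let $h\in C_{\M_g}(\tau)$. The plan is to show $\tilde\phi(h)=0$ by reducing to the boundedness of $\phi$ on stabilizers of single curves. Two standard facts about homogenization will be used. First, $\tilde\phi$ is a homogeneous quasimorphism whose distance from $\phi$ is uniform: $|\tilde\phi(x)-\phi(x)|\le D(\phi)$ for all $x$. Second, homogeneous quasimorphisms are conjugation invariant, and if $x$ and $y$ commute then $\tilde\phi(xy)=\tilde\phi(x)+\tilde\phi(y)$. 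Consequently, if $\phi$ is bounded by $M$ on a subgroup $K$, then $|\tilde\phi|\le M+D(\phi)$ on $K$. Homogeneity then forces $\tilde\phi\equiv 0$ on $K$, because for $k\in K$ we have $n|\tilde\phi(k)|=|\tilde\phi(k^n)|$, which stays bounded as $n$ grows. Hence $\tilde\phi$ vanishes on $\mathrm{Stab}_{\M_g}(\alpha)$ for every simple closed curve $\alpha$.

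The key step is to show that some power of $h$ fixes a simple closed curve. If $\tau$ is trivial there is nothing to leverage, so I first dispose of this case. One could say the centralizer is all of $\M_g$, which is a problem. I will interpret ``multitwist'' as a nontrivial multitwist, i.e.\ discard the curves with $n_i=0$ and the null curve, and assume at least one $n_i\neq 0$ with $\alpha_i$ essential. Then $h\tau h^{-1}=\tau$ gives $T_{h(\alpha_1)}^{n_1}\cdots T_{h(\alpha_r)}^{n_r}=T_{\alpha_1}^{n_1}\cdots T_{\alpha_r}^{n_r}$. By the uniqueness of multitwist representations (the canonical reduction system of a multitwist is exactly its set of curves with nonzero exponent, cf.\ \cite[Section 3.3]{MR2850125}), $h$ permutes the finite set $\{\alpha_i : n_i\neq 0\}$. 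Hence some power $h^k$ with $k\ge1$ (e.g.\ $k=r!$) fixes $\alpha_1$, so $h^k\in\mathrm{Stab}_{\M_g}(\alpha_1)$. By the first paragraph $\tilde\phi(h^k)=0$, and so $\tilde\phi(h)=\tilde\phi(h^k)/k=0$.

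I expect the main obstacle to be justifying that $h$ permutes the curves of the multitwist, i.e.\ the uniqueness statement for multitwists. The fallback argument is as follows. If $T_{\beta_1}^{n_1}\cdots T_{\beta_r}^{n_r}=T_{\alpha_1}^{n_1}\cdots T_{\alpha_r}^{n_r}$ with all $n_i\ne0$, then compare the growth of $i(\gamma,\tau^N(\delta))$ for test curves $\gamma,\delta$. Alternatively, use that the curves $\alpha$ with $i(\alpha,\alpha_j)=0$ for all $j$ are exactly those fixed by the multitwist, so that the fixed set determines the multicurve. For the torus ($g=1$), the analysis is direct: $T_\alpha^n=T_\beta^n$ with $n\neq0$ forces $\alpha=\beta$, as one sees from the action on $H_1$.
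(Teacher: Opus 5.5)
Your proposal is correct and follows the paper's proof. Both arguments use the uniqueness of multitwist representations (the paper cites Lemma 3.17 of Farb--Margalit) to see that a centralizing element permutes the curves, pass to a power lying in a curve stabilizer, and conclude by homogeneity; the only cosmetic difference is that the paper computes $\lim_{m}\phi(f^{Nm})/(Nm)=0$ directly along the subsequence, whereas you go through $|\tilde{\phi}-\phi|\le D(\phi)$. Your explicit restriction to nontrivial multitwists (discarding $\mathbf{0}$ and zero exponents) is a worthwhile precaution: for the trivial multitwist the centralizer is all of $\M_g$ and the conclusion fails, and the paper handles this only implicitly by taking all $n_i\neq 0$.
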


\begin{proof}
It follows from \cite[Proposition 2.10]{MR3726870} that $\tilde{\phi}$ is a homogeneous group quasimorphism. Let $T_{\alpha_1}^{n_1}\cdots T_{\alpha_r}^{n_r}$ be a multitwist, where $\{\alpha_1,\dots,\alpha_r\}$ is a multicurve and $n_i$'s are non-zero integers. If $f\in C_{\M_g}(T_{\alpha_1}^{n_1}\cdots T_{\alpha_r}^{n_r})$, then we have
$$T_{\alpha_1}^{n_1}\cdots T_{\alpha_r}^{n_r}= fT_{\alpha_1}^{n_1}\cdots T_{\alpha_r}^{n_r}f^{-1}=T_{f(\alpha_1)}^{n_1}\cdots T_{f(\alpha_r)}^{n_r}.$$
By~\cite[Lemma 3,17]{MR2850125}, the preceding equation yields $\{\alpha_1,\dots,\alpha_r\}=\{f(\alpha_1),\dots,f(\alpha_r)\}$. Then, there exists an integer $N>0$ such that $f^N(\alpha_i)=\alpha_i$ for each $i$, and hence $\langle f^N\rangle \le \mathrm{Stab}_{\M_g}(\alpha_i)$. We are given that $\phi$ is bounded on $\mathrm{Stab}_{\M_g}(\alpha_i)$ for each $i$. It follows that $\phi$ is bounded on $\langle f^N \rangle$, and hence there exists $K>0$ such that $|\phi(f^{Nm})| \le K$ for each integer $m$. Computing along the subsequence $\{Nm \mid m=1, 2, \ldots\}$, we get
$$
|\tilde{\phi}(f)|=\Big|\lim_{m\to \infty} \frac{\phi(f^{Nm})}{Nm}\Big|\leq \frac{K}{N}\lim_{m\to \infty} \frac{1}{m}=0.
$$
Hence, $\tilde{\phi}$ vanishes on the centralizer of each multitwist.
\end{proof}

\begin{lem} \label{lem:vanishing_quasimorphisms1}
Let $f$ be a multitwist and $\{f_i\mid i\in I\}$ a family of multitwists. Then the space $$\mathcal{HG}^{C_{\M_g}(f)\cup \{f_i \,\mid \, i\in I\}}(\M_g;\R)$$ of homogeneous group quasimorphisms of $\M_g$ vanishing on $\C_{\M_g}(f) \cup \{f_i\mid i\in I\}$ is infinite-dimensional.
\end{lem}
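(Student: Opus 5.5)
The plan is to combine Theorem~\ref{thm:bounded_morphisms_mcg} with Lemma~\ref{lem:vanishing_morphism_multitwist}. Since $\M_g$ is not virtually abelian, Theorem~\ref{thm:bounded_morphisms_mcg} applied with the empty collection of cyclic subgroups gives an infinite-dimensional subspace $V\subseteq\mathcal{HG}(\M_g;\R)$. Every $\phi\in V$ is bounded on $\mathrm{Stab}_{\M_g}(\alpha)$ for each essential simple closed curve $\alpha$. The stabilizer of the trivial curve $\textbf{0}$ is all of $\M_g$; however, we only need boundedness on stabilizers of curves actually appearing in multitwists, and these are essential. (For $g=1$, $\M_1=SL_2(\Z)$ is not virtually abelian, so the theorem still applies.)

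Next we need $\phi$ to vanish on the relevant sets, not merely be bounded on them. Each $\phi\in V$ is homogeneous, so its homogenization $\tilde\phi$ equals $\phi$. Applying Lemma~\ref{lem:vanishing_morphism_multitwist} to $\phi$ therefore shows that $\phi$ itself vanishes on the centralizer of every multitwist. In particular, $\phi$ vanishes on $C_{\M_g}(f)$. Each $f_i$ lies in its own centralizer $C_{\M_g}(f_i)$, so $\phi(f_i)=0$ for all $i\in I$. Hence
\[
V\subseteq \mathcal{HG}^{C_{\M_g}(f)\cup\{f_i\mid i\in I\}}(\M_g;\R),
\]
and the latter space is infinite-dimensional.

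The main point to verify is the hypothesis of Lemma~\ref{lem:vanishing_morphism_multitwist}: boundedness is required on stabilizers of the curves $\alpha_j$ making up the multicurves. Since the $\alpha_j$ in a multitwist with nonzero exponents are essential (twists about null-homotopic curves are trivial and may be discarded), the condition supplied by Theorem~\ref{thm:bounded_morphisms_mcg}(2) suffices. The degenerate case where $f$ or some $f_i$ is the identity (empty or trivial multicurve) needs a separate remark. If $f_i=1$, then $\phi(1)=0$ automatically by homogeneity. If $f=1$, its centralizer is all of $\M_g$, and the statement would fail. So I would either interpret "multitwist" as nontrivial, or note that in the intended application $f$ corresponds to a nontrivial element of the quandle, and treat this case explicitly.
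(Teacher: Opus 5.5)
Your argument is correct and matches the paper's proof, which simply cites Theorem~\ref{thm:bounded_morphisms_mcg} and Lemma~\ref{lem:vanishing_morphism_multitwist}; your details fill that in correctly, namely that a homogeneous $\phi$ equals its homogenization and that each $f_i$ lies in $C_{\M_g}(f_i)$. Your caveats are also well taken, and the paper does not address them: for $f=1$ the centralizer is all of $\M_g$, so the space is zero and $f$ must be assumed nontrivial (this still suffices for Theorem~\ref{thm:D_g_cohomolofy_id}, since Theorem~\ref{thm:sufficient_condition2} only needs some $i_0$), and Lemma~\ref{lem:vanishing_morphism_multitwist} should only require boundedness on stabilizers of essential curves.
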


\begin{proof}
The result follows from Theorem~\ref{thm:bounded_morphisms_mcg} and Lemma~\ref{lem:vanishing_morphism_multitwist}.
\end{proof}

We are now in a position to present our main result on bounded cohomology of Dehn quandles.

\begin{theorem} \label{thm:D_g_cohomolofy_id}
Let $g\geq 1$ and $X=\D_g^{ns}, \D_g, \C_g, \W_g, \W_g^1$ or $\W_g^+$. Then $H_b^2(X;\R)$ is infinite-dimensional. 
\end{theorem}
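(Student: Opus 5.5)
We apply Theorem~\ref{thm:sufficient_condition2} with $G=\M_g/Z(\M_g)$, possibly after lifting quasimorphisms through the finite center. First, by Proposition~\ref{prop:quandle_as_coset_quandle} and Proposition~\ref{prop:inner_automorphisms_of_quandles}, each $X$ in the list can be written as
\[
X\cong\sqcup_{i\in I}\bigl(\Inn(X)/H_i,S_{x_i}\bigr),
\]
where $\Inn(X)\cong \M_g/Z(\M_g)$. Here $x_i$ runs over representatives of the connected components, $S_{x_i}$ is the image of the mapping class corresponding to $x_i$ (a multitwist $T_{\alpha_1}^{n_1}\cdots T_{\alpha_r}^{n_r}$, with $\textbf{0}$ giving the identity), and $H_i=\mathrm{Stab}(x_i)$. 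Under the embedding $X\hookrightarrow \Conj(\M_g)$, the stabilizer of $x_i$ is the image of the centralizer $C_{\M_g}(T_{x_i})$, up to the center.

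Next we choose the data for Theorem~\ref{thm:sufficient_condition2}. Take $J=\{i_0\}$, where $x_{i_0}$ is a non-separating simple closed curve $\alpha$, so that $H_{i_0}$ is the image of $C_{\M_g}(T_\alpha)$. We then need an infinite-dimensional space of homogeneous quasimorphisms on $G$ vanishing on $H_{i_0}$ and on all $z_i$, which are images of multitwists. Lemma~\ref{lem:vanishing_quasimorphisms1}, applied with $f=T_\alpha$ and $\{f_i\}$ the family of multitwists $T_{x_i}$, gives an infinite-dimensional space of homogeneous quasimorphisms on $\M_g$ vanishing on $C_{\M_g}(T_\alpha)\cup\{T_{x_i}\}$. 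This requires $\M_g$ to be not virtually abelian, which holds for all $g\ge1$ since $\M_1\cong SL_2(\Z)$ contains free subgroups.

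The main obstacle is passing to the quotient by $Z(\M_g)$, which is trivial for $g\ge3$ and of order two for $g=1,2$. A homogeneous quasimorphism is a class function and vanishes on torsion elements, since $\phi(z)=\tfrac12\phi(z^2)=0$ when $z^2=1$. Moreover, for $z$ central, $\phi(hz)=\tfrac12\phi(h^2z^2)=\phi(h)$. So every homogeneous quasimorphism on $\M_g$ is constant on cosets of $Z(\M_g)$ and descends to a homogeneous quasimorphism on $G$, injectively. It still vanishes on the image of the centralizer and on the $z_i$. One also checks that $H_{i_0}$ is exactly the image of $C_{\M_g}(T_\alpha)$. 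This holds because $gT_\alpha g^{-1}=T_{g(\alpha)}$ equals $T_\alpha$ modulo the center if and only if it equals $T_\alpha$, as the hyperelliptic involution times $T_\alpha$ is not a Dehn twist. Hence the hypothesis of Theorem~\ref{thm:sufficient_condition2} holds, and $H_b^2(X;\R)$ is infinite-dimensional. For $X=\D_g^{ns}$, which is connected, the index set is a singleton and the argument is the same.
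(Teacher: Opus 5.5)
Your proposal is correct and follows the paper's route: write $X$ as a coset quandle over $\Inn(X)\cong\M_g/Z(\M_g)$ using Propositions~\ref{prop:quandle_as_coset_quandle} and~\ref{prop:inner_automorphisms_of_quandles}, observe that the $z_i$ are multitwists and that $H_{i_0}$ lies in the centralizer of a multitwist, and then combine Lemma~\ref{lem:vanishing_quasimorphisms1} with Theorem~\ref{thm:sufficient_condition2}. You also make explicit two points the paper's proof passes over: you descend homogeneous quasimorphisms from $\M_g$ to $\M_g/Z(\M_g)$ using $\phi(hz)=\phi(h)$ for central torsion $z$, and you choose $x_{i_0}$ to be a non-separating curve rather than an arbitrary $i_0$, which would fail for the component $\{\textbf{0}\}$, whose stabilizer is all of $\Inn(X)$.
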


\begin{proof}
By Proposition \ref{prop:inner_automorphisms_of_quandles}, we have $\Inn(X) \cong \mcg_g/Z(\mcg_g)$. In view of Proposition~\ref{prop:quandle_as_coset_quandle}, we have $X \cong \sqcup_{i\in I} (\Inn(X)/H_i,S_{x_i})$, where $i\in I$, $x_i\in X$, and $H_i=\mathrm{Stab}_{\Inn(X)}(x_i) = C_{\M_g}(S_{x_i})$. For each $x\in X$, we have $S_x=T_{\alpha_1}^{n_1}T_{\alpha_2}^{n_2}\cdots T_{\alpha_r}^{n_r}$ for some multicurve $\{\alpha_1,\dots,\alpha_r\}$ and some integers $n_i$. In other words,  $S_x$ is a multitwist for each $x \in X$. By Lemma~\ref{lem:vanishing_quasimorphisms1}, for any fixed $i_0 \in I$, the vector space $\mathcal{HG}^{H_{i_0}\cup \{S_{x_i}\mid i\in I\}}(\M_g;\R)$ is infinite-dimensional. Finally, it follows from Theorem~\ref{thm:sufficient_condition2} that $H_b^2(X;\R)$ is infinite-dimensional.
\end{proof}

\subsection{An analogue of the Gromov Mapping Theorem}
We recall the well-known Gromov Mapping Theorem for the bounded cohomology of groups \cite[Corollary 4.25]{MR3726870}.

\begin{theorem}
	Let $1 \longrightarrow H \longrightarrow G \longrightarrow K \longrightarrow 1$ 	be a short exact sequence of groups such that $H$ is amenable. Then the quotient map $G \longrightarrow K$ induces an isometric isomorphism $\Ha_b^n(K,\mathbb{R}) \longrightarrow \Ha_b^n(G,\mathbb{R})$ for each $n \ge 0$.
\end{theorem}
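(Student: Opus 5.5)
This is the classical Gromov Mapping Theorem, quoted from \cite[Corollary 4.25]{MR3726870}, so in the paper it is cited rather than reproved. If one wanted a self-contained argument, I would use the relatively injective resolution approach. The first step is to identify $\Ha_b^n(G,\mathbb{R})$ with the cohomology of the $G$-invariants of any strong resolution of $\mathbb{R}$ by relatively injective normed $G$-modules. For concreteness I would use the homogeneous bar resolution $\ell^\infty(G^{n+1})$.

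The key steps, in order, are as follows.

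\begin{enumerate}
\item Show that, because $H$ is amenable, the complex $\ell^\infty(K^{n+1})$ is a strong resolution of $\mathbb{R}$ by relatively injective $G$-modules. Here $G$ acts through the quotient $G\to K$. Strongness is inherited, since the contracting homotopy on $\ell^\infty(K^{*+1})$ is already defined.
\item Prove relative injectivity, which is the main step. Given an extension problem for $G$-modules, first solve it $K$-equivariantly in the $H$-invariant setting. Then average over $H$ using a left-invariant mean on $\ell^\infty(H)$, which turns an $H$-equivariant extension into an $H$-invariant one of no larger norm. Equivalently, one shows that $\ell^\infty(K^{n+1})\cong \ell^\infty(G^{n+1})^H$ is a norm-one complemented summand of $\ell^\infty(G^{n+1})$, via the projection obtained by integrating over $H$-orbits with the mean.
\item Apply the fundamental lemma of relative homological algebra. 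The map induced by the quotient, $\ell^\infty(K^{*+1})\to \ell^\infty(G^{*+1})$, extends the identity on $\mathbb{R}$ between two relatively injective strong resolutions, so it induces an isomorphism on the cohomology of invariants. The invariants of the first complex are exactly $\ell^\infty(K^{*+1})^K$, whose cohomology is $\Ha_b^*(K,\mathbb{R})$.
\item Prove that the isomorphism is isometric. The pullback is norm non-increasing. For the inverse, use the averaging projection from step 2, which has norm one and is a chain map, so the seminorms coincide.
\end{enumerate}

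I expect the main obstacle to be step 2: making sure that the mean-averaged projection is $G$-equivariant (and not merely $H$-equivariant) and commutes with the coboundaries. This is where normality of $H$ and left-invariance of the mean enter. I would verify it directly on homogeneous cochains, using the formula
$$Pf(k_0,\dots,k_n)=m\bigl(h\mapsto f(\text{lifts twisted by } h)\bigr),$$
averaging over one $H$-coordinate at a time. If that proves awkward, the alternative is to cite Ivanov's characterization of bounded cohomology via amenable actions.
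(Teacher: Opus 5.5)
Your outline is correct. It is the standard proof behind the paper's citation of \cite[Corollary 4.25]{MR3726870}; the paper itself gives no argument for this statement.

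Your averaging operator $P$ is essentially what the paper transplants to quandles in Theorem~\ref{gmt for quandles}. There, the map $\kappa$ is built from the same iterated means, and Lemma~\ref{mean on product} is your ``one $H$-coordinate at a time'' compatibility with the coboundary. The identity $\kappa^*\pi_b^*=\mathrm{id}$ is your chain-level identity $P\pi^*=\mathrm{id}$. What has no quandle counterpart is your Step 3. In the group case surjectivity comes from the fundamental lemma, which makes $\pi^*P$ $G$-homotopic to the identity on $\ell^\infty(G^{*+1})$. Without that input one only gets injectivity, and the paper's remark after its two corollaries shows that surjectivity genuinely fails for quandles.

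Two points of precision.

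First, $\ell^\infty(K^{n+1})$ is the subspace of $\ell^\infty(G^{n+1})$ invariant under $H^{n+1}$ acting by right multiplication in each coordinate. It is not the invariants of the diagonal restriction of the $G$-action: for $n\ge 1$ and $H\neq 1$, those are the functions on $H\backslash G^{n+1}$, a strictly larger space. Your formula averaging one coordinate at a time shows you mean the former, but the notation $\ell^\infty(G^{n+1})^H$ should say so.

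Second, drop the first phrasing of Step 2. The modules in a $G$-extension problem carry no $K$-action, so ``solve it $K$-equivariantly'' has no direct meaning. The clean argument is the retraction one you give next: solve the problem in $\ell^\infty(G^{n+1})$ using its relative injectivity, then compose with the $G$-equivariant norm-one projection $P$.
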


Recall that, a group $A$ is called {\it amenable} if it admits a {\it left-invariant mean}. Here, a left-invariant mean on $A$ is a linear functional $m :  \Ca^1_b(A, \mathbb{R}) \longrightarrow \mathbb{R}$ satisfying the following properties:
	\begin{enumerate}
		\item Positivity: If $f \in  \Ca^1_b(A, \mathbb{R})$ is such that $f(a) \ge 0$ for all $a \in A$, then $m(f) \ge 0.$
		\item Normalization: $m(\mathbf{1}) = 1$, where $\mathbf{1} \in \Ca^1_b(A, \mathbb{R})$ denotes the constant map given by $\mathbf{1}(a)=1$ for all $a \in A$. 
		\item Left-invariance: If $b \in A$ and $f \in  \Ca^1_b(A, \mathbb{R})$, then $m(L_b f) = m(f)$, where $L_b f \in  \Ca^1_b(A, \mathbb{R})$ is defined by $(L_b f)(a) = f(ba)$ for all $a \in A$.
	\end{enumerate}

It follows from the definition that, if $f \in \Ca^1_b(A, \mathbb{R})$, then
$$	-\|f\|_\infty \, \mathbf{1} \le f \le \|f\|_\infty \, \mathbf{1}.$$
Further, positivity and linearity of the mean implies that
	\begin{eqnarray} \label{inequality}
		|m(f)| \le \|f\|_\infty.
	\end{eqnarray}

Let $A$ be an amenable group and $m : \Ca^1_b(A, \mathbb{R}) \to \mathbb{R}$ a left-invariant mean on $A$. Then $m$ inductively induces a family of means $m^{(n)} : \Ca^1_b(A^n, \mathbb{R}) \to \mathbb{R}$ on $A^n$ as follows:
\begin{itemize}
	\item $m^{(1)} := m$;
	\item for $n \ge 1$ and $f \in \Ca^1_b(A^{n+1}, \mathbb{R})$, we have
	\begin{eqnarray*}
		m^{(n+1)}(f)
		:=
		m\!\left(
		a_{n+1} \mapsto
		m^{(n)}\big(f(-,a_{n+1})\big)
		\right).
	\end{eqnarray*}
\end{itemize}
We see that $m^{(n)}$ is a left-invariant mean on $A^n$ for each $n \ge 1$.

\begin{lem}\label{mean on product}
	Let $n \ge 1$ be an integer, $A$ be an amenable group and $g \in \Ca^1_b(A^{n+1}, \mathbb{R})$. Suppose there there exist $h \in \Ca^1_b(A^n, \mathbb{R})$ and $1 \le i \le n+1$ such that
	\begin{eqnarray*}
		g(a_1,\dots,a_{n+1})=h(a_1,\dots,\widehat{a_i},\dots,a_{n+1})
	\end{eqnarray*}
	for all $(a_1,\dots,a_{n+1}) \in A^{n+1}$.
	Then $	m^{(n+1)}(g)
	=
	m^{(n)}(h).$
\end{lem}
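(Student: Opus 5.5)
The plan is induction on $n$, using left-invariance of $m$ and the recursive definition of $m^{(n+1)}$; the awkward case is when the omitted variable is not the last one. Throughout, $m$ is a left-invariant mean on $A$, and for a fixed last coordinate $a_{n+1}$ we write $f(-,a_{n+1})$ for the function of the first $n$ variables.

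\textbf{Case $i=n+1$.} Here $g(-,a_{n+1})=h$ for every $a_{n+1}$, so the inner function $a_{n+1}\mapsto m^{(n)}(g(-,a_{n+1}))$ is the constant $m^{(n)}(h)$. By linearity and normalization of $m$, we get $m^{(n+1)}(g)=m^{(n)}(h)\,m(\mathbf 1)=m^{(n)}(h)$. No induction is needed.

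\textbf{Case $i\le n$.} We induct on $n$.

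For $n=1$ and $i=1$, the function $g(a_1,a_2)=h(a_2)$ has inner function $a_2\mapsto m(\text{constant } h(a_2))=h(a_2)$. Applying $m$ gives $m^{(2)}(g)=m(h)=m^{(1)}(h)$.

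For $n\ge 2$ and $i\le n$, fix $a_{n+1}$. The slice $g(-,a_{n+1})$ on $A^n$ equals $h_{a_{n+1}}$ with the $i$-th variable omitted, where $h_{a_{n+1}}(b_1,\dots,b_{n-1}):=h(b_1,\dots,b_{n-1},a_{n+1})$ is a bounded function on $A^{n-1}$. The induction hypothesis for $n-1$ (either branch, since $i\le n$) gives
$$m^{(n)}\big(g(-,a_{n+1})\big)=m^{(n-1)}\big(h_{a_{n+1}}\big)=m^{(n-1)}\big(h(-,a_{n+1})\big).$$
Applying $m$ in the variable $a_{n+1}$ to both sides, the left side becomes $m^{(n+1)}(g)$ and the right side becomes $m^{(n)}(h)$ by the recursive definition. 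This closes the induction.

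The main point to check is that the statement being inducted on covers all positions $i$ simultaneously, so the hypothesis is available at level $n-1$ for any omitted coordinate $i\le n$. It is also worth noting that the base case $n=1$, $i=1$ is consistent with the convention $m^{(1)}=m$. Left-invariance is never used, and boundedness of the slices $h_{a_{n+1}}$ is immediate from $\|h_{a_{n+1}}\|_\infty\le \|h\|_\infty$; by \eqref{inequality} every inner function in the recursion is also bounded, so all the means are defined.
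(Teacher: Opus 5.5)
Your proof is correct and follows essentially the same route as the paper: induction on $n$ through the recursive definition $m^{(n+1)}(g)=m\big(a_{n+1}\mapsto m^{(n)}(g(-,a_{n+1}))\big)$, slicing off the last coordinate and using normalization for constant inner functions. Your version is slightly tidier than the paper's in two places: you treat $i=n+1$ directly, where the paper invokes the induction hypothesis although it does not literally apply, and your base case orders the iterated means as the definition prescribes, whereas the paper's base case reverses them.
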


\begin{proof}
	We argue by induction on $n$. For $n=1$, let $h \in \Ca^1_b(A, \mathbb{R})$ and $g \in \Ca^1_b(A^2, \mathbb{R})$ such that $g(a_1,a_2)=h(a_1)$ for all $(a_1,a_2) \in A^2$. By definition, we have
$$ m^{(2)}(g) =	 m^{(1)}\!\left(	a_1 \mapsto m^{(1)}\!\left( a_2 \mapsto g(a_1,a_2)\right) \right)= m^{(1)}\!\left(a_1 \mapsto m^{(1)}\!\left(a_2 \mapsto h(a_1)\right) \right).$$
Since $h(a_1)$ is constant with respect to $a_2$, we obtain
		\begin{eqnarray*}
			m^{(1)}\!\left(
			a_2 \mapsto h(a_1)
			\right)=h(a_1)\, m^{(1)}({\bf 1})=h(a_1).
		\end{eqnarray*}	
Hence, we have $m^{(2)}(g)=m^{(1)}\!\left( a_1 \mapsto h(a_1) \right)=m^{(1)}(h).$
A similar argument works when $g(a_1,a_2)=h(a_2)$ for all $(a_1,a_2) \in A^2$. Assume that the assertion holds for some $n \ge 1$. Let $g \in \Ca^1_b(A^{n+1}, \mathbb{R})$ be independent of the $i$-th coordinate. By definition,
	$$ m^{(n+1)}(g) =	m\!\left(a_{n+1} \mapsto m^{(n)}\big(g(-,a_{n+1})\big) \right). $$
	For a fixed $a_{n+1} \in A$, the map $(a_1,\dots,a_n) \mapsto	g(a_1,\dots,a_n,a_{n+1})$ is independent of the $i$-th coordinate (if $i\le n$) and $a_{n+1}$ (if $i=n+1$). By the induction hypothesis, $m^{(n)}\big(g(-,a_{n+1})\big)=m^{(n-1)}\big(h(-,a_{n+1})\big)$. Substituting this in the definition of $m^{(n+1)}$ yields
	$$m^{(n+1)}(g)	=	m\!\left(	a_{n+1} \mapsto	m^{(n)}\big(g(-,a_{n+1})\big)	\right)=	m\!\left(	a_{n+1} \mapsto	m^{(n-1)}\big(h(-,a_{n+1})\big)	\right)=m^{(n)}(h),$$
	which is desired.
\end{proof}

We recall that abelian groups are amenable \cite[Theorem 3.3]{MR3726870}, and hence admit a left-invariant mean. The following result is a quandle analogue of the Gromov Mapping Theorem for abelian quandle extensions. 

\begin{theorem}\label{gmt for quandles}
	Let $X$ be a quandle, $A$ be an abelian group equipped with a left-invariant mean $m$, and $\varphi \colon X \times X \to A$ be a quandle $2$-cocycle. Let $E = X \times_\varphi A$ be the associated abelian extension with the quandle operation
	\begin{eqnarray*}
		(x,a) \star (y,b) = \bigl(x * y,\, a + \varphi(x,y)\big)
	\end{eqnarray*}
	and let $\pi \colon E \to X$ be the projection map $(x, a) \mapsto x$. For each $n \ge 1$, define a map $	\kappa^n \colon C_b^n(E, \mathbb{R}) \longrightarrow C_b^n(X, \mathbb{R})$ by
	\begin{eqnarray*}
			(\kappa^n f)(x_1,\dots,x_n)
			=
			m^{(n)}\big(\widetilde{f}_{x_1,\dots,x_n}\big),
		\end{eqnarray*}
		where $\widetilde{f}_{x_1,\dots,x_n} : A^n \longrightarrow \mathbb{R}$ is defined as 
		$$\widetilde{f}_{x_1,\dots,x_n}(a_1,\dots,a_n)=	f\big((x_1,a_1),\dots,(x_n,a_n)\big).$$
		Then the following assertions hold:
	\begin{enumerate}
		\item $\kappa^*$ is well-defined. 
		\item $\kappa^*$ is a chain map.
		\item $\kappa^{*}  \pi_b^* = \mathrm{id}$.
	\end{enumerate}
	Consequently, $\pi_b^n:H_b^n(X,\mathbb{R}) \hookrightarrow H_b^n(E,\mathbb{R})$ is injective for all $n \ge 1$.
\end{theorem}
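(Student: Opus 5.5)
The plan is to prove the three assertions directly from the properties of the iterated means $m^{(n)}$, and then deduce injectivity by the usual retraction argument. Throughout, $\pi_b^n \colon C_b^n(X,\mathbb{R}) \to C_b^n(E,\mathbb{R})$ denotes pullback $f \mapsto f\circ \pi^{\times n}$; this is a chain map because $\pi$ is a quandle homomorphism.

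\emph{Step 1: well-definedness.} For $f\in C_b^n(E,\mathbb{R})$, each $\widetilde f_{x_1,\dots,x_n}$ is bounded on $A^n$ by $\|f\|_\infty$. So $m^{(n)}$ can be applied to it, and an induction using \eqref{inequality} gives $|(\kappa^n f)(x_1,\dots,x_n)|\le \|f\|_\infty$. Hence $\kappa^n f$ is bounded, and $\kappa^n$ is clearly linear.

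\emph{Step 2: invariance of $m^{(n)}$.} I first record that $m^{(n)}$ is invariant under translation by any $(b_1,\dots,b_n)\in A^n$. This follows by induction from the nested definition and the left-invariance of $m$; since $A$ is abelian, left and right translations coincide. With this in hand, I expand $\delta^{n-1}f$ at the point $((x_1,a_1),\dots,(x_n,a_n))$ using \eqref{eq:coboundary_operator}, apply $m^{(n)}$, and treat the two kinds of terms separately.

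\begin{itemize}
\item The term $f((x_1,a_1),\dots,\widehat{(x_i,a_i)},\dots,(x_n,a_n))$ does not depend on $a_i$. By Lemma~\ref{mean on product}, its mean is $(\kappa^{n-1}f)(x_1,\dots,\widehat{x_i},\dots,x_n)$.
\item The term $f((x_1*x_i,\,a_1+\varphi(x_1,x_i)),\dots,(x_{i-1}*x_i,\,a_{i-1}+\varphi(x_{i-1},x_i)),(x_{i+1},a_{i+1}),\dots)$ also does not depend on $a_i$, so Lemma~\ref{mean on product} removes that coordinate. The remaining coordinates $a_j$ with $j<i$ are shifted by the constants $\varphi(x_j,x_i)$, which depend only on the fixed $x$'s. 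Translation invariance from the start of this step removes these shifts, leaving $(\kappa^{n-1}f)(x_1*x_i,\dots,x_{i-1}*x_i,x_{i+1},\dots,x_n)$.
\end{itemize}

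Summing the two kinds of terms with their signs gives $\kappa^n\delta^{n-1}=\delta^{n-1}\kappa^{n-1}$.

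\emph{Step 3: $\kappa\pi_b^*=\mathrm{id}$.} For $f\in C_b^n(X,\mathbb{R})$, the function $\widetilde{(\pi_b^n f)}_{x_1,\dots,x_n}$ is the constant $f(x_1,\dots,x_n)$. Normalization of $m^{(n)}$, or repeated use of Lemma~\ref{mean on product}, gives $\kappa^n\pi_b^n f=f$.

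\emph{Step 4: injectivity.} Suppose $\pi_b^*[f]=0$, so $f\circ\pi=\delta g$ for some bounded cochain $g$ on $E$. Applying $\kappa$ and using Steps 2 and 3 gives $f=\kappa\delta g=\delta(\kappa g)$. Thus $[f]=0$, and $\pi_b^n$ is injective on $H_b^n(X,\mathbb{R})$.

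\emph{Main obstacle.} For rack cohomology, Steps 1--4 are complete. For the quandle (normalized) complex, $\kappa$ must also carry $D^n_b(E,\mathbb{R})$ into $D^n_b(X,\mathbb{R})$, and this is where I expect difficulty. A cochain in $D^n(E,\mathbb{R})$ vanishes only when $(x_j,a_j)=(x_{j+1},a_{j+1})$, not merely when $x_j=x_{j+1}$. So after setting $x_j=x_{j+1}$, the function $\widetilde f$ need not vanish off the diagonal $a_j=a_{j+1}$.

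My first attempt would be the following. With $x_j=x_{j+1}$ fixed, change variables $a_{j+1}=a_j+c$ inside the nested mean (legitimate by translation invariance in the inner coordinate). This reduces the problem to showing that the mean of a function is unaffected by its values on the single slice $c=0$. That holds for an infinite amenable $A$, since a point has mean zero there. If this fails in general, I would instead work with the quotient complexes and check that $\kappa$ descends, or else state the result for the rack complex.
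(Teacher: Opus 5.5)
Your Steps 1--4 are correct and are essentially the paper's proof. You use the same bound $\|\kappa^n f\|_\infty\le\|f\|_\infty$, translation invariance of $m^{(n)}$ to absorb the shifts $\varphi(x_j,x_i)$, and Lemma~\ref{mean on product} to drop the $i$-th coordinate. You then use normalization of the mean to get $\kappa\pi_b^*=\mathrm{id}$, and the usual retraction argument for injectivity.

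\emph{Your ``main obstacle''.} The paper's proof, like yours, is carried out entirely in the full bounded complex $C^*_b$ and never addresses degenerate tuples. As written, it therefore establishes the rack version, and your caveat is justified.

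However, the repair you sketch cannot work, because $\kappa$ does not preserve normalized cochains for any nontrivial $A$. Define $f\bigl((x,a),(y,b)\bigr)=1$ if $x=y$ and $a\neq b$, and $0$ otherwise. This $f$ is bounded and vanishes whenever $(x,a)=(y,b)$. Yet $\widetilde f_{x,x}=\mathbf 1-\mathbf 1_{\{a_1=a_2\}}$, so $(\kappa^2 f)(x,x)=1-m(\mathbf 1_{\{0\}})$. This equals $1$ for infinite $A$ and $1-1/|A|$ for finite $A$.

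The fact that a point has mean zero is precisely what breaks your argument. The mean ignores the diagonal, which is where $f$ is forced to vanish, and sees only the off-diagonal values, which are unconstrained. Your fallback of descending to the quotient $C^n/D^n$ used in the paper needs the same property of $\kappa$, so it fails too.

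\emph{A repair that works.} Compose with the Litherland--Nelson map $\alpha(x_1,\dots,x_n)=(x_1,x_2-x_1,\dots,x_n-x_{n-1})$, expanded multilinearly. For a quandle, $\alpha$ has three properties:
\begin{enumerate}
\item it is a chain map for the rack boundary;
\item it kills degenerate tuples;
\item it differs from the identity by degenerate tuples.
\end{enumerate}
Hence $f\mapsto f\circ\alpha$ is a bounded cochain retraction onto normalized cochains, with $\|f\circ\alpha\|_\infty\le 2^{n-1}\|f\|_\infty$. Then $g\mapsto(\kappa g)\circ\alpha$ is a bounded cochain map from normalized cochains on $E$ to normalized cochains on $X$. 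It is a left inverse of $\pi_b^*$ there, which gives injectivity for bounded quandle cohomology as well.
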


\begin{proof} 
For assertion (1), let $f \in C_b^n(E,\mathbb{R})$ and fix $(x_1,\dots,x_n)\in X^n$. Then, we have 
		\begin{eqnarray*}
			\big|(\kappa^n f)(x_1,\dots,x_n)\big|
			&=&
			\big|m^{(n)}(\widetilde{f}_{x_1,\dots,x_n})\big|
			\\
			&\le&
			\|\widetilde{f}_{x_1,\dots,x_n}\|_\infty, \quad \text{by \eqref{inequality}}
			\\
			&=&
			\sup_{a_1,\dots,a_n \in A}
			\big|
			f\big((x_1,a_1),\dots,(x_n,a_n)\big)
			\big|
			\\
			&\le&
			\|f\|_\infty.
	\end{eqnarray*}
	Hence, $\|\kappa^n f\|_\infty \le \|f\|_\infty$, and the cochain $\kappa^n f$ is bounded, which makes the map $\kappa^n $ well-defined for each $n \ge 1$.
	
	\par
	For assertion (2), let $\delta_E^n$ and $\delta_X^n$ denote the coboundary operators for $E$ and $X$, respectively. Let $f \in C_b^n(E,\mathbb{R})$ and fix $(x_1,\dots,x_{n+1})\in X^{n+1}$. Define
		\begin{eqnarray*}
			\widetilde{\delta_E^n f}_{x_1,\dots,x_{n+1}}(a_1,\dots,a_{n+1})
			=
			(\delta_E^n f)\big((x_1,a_1),\dots,(x_{n+1},a_{n+1})\big).
		\end{eqnarray*}
		Since $\delta_E^n f$ is bounded, it follows that $\widetilde{\delta_E^n f}_{x_1,\dots,x_{n+1}} \in \Ca_b^1(A^{n+1}, \mathbb{R}),$
		and therefore
		\begin{eqnarray*}
			(\kappa^{n+1}\delta_E^n f)(x_1,\dots,x_{n+1})
			=
			m^{(n+1)}(\widetilde{\delta_E^n f}_{x_1,\dots,x_{n+1}}).
		\end{eqnarray*}
By expanding $\delta_E^n f$, we can write $\widetilde{\delta_E^n f}_{x_1,\dots,x_{n+1}}
		=
		\sum_{i=2}^{n+1} (-1)^i
		\big(
		G^{(1)}_i - G^{(2)}_i
		\big),$
		where
\begin{Small}
		\begin{eqnarray*}
			G^{(1)}_i(a_1,\dots,a_{n+1}) &=& f((x_1,a_1),\dots,\widehat{(x_i,a_i)},\dots,(x_{n+1},a_{n+1})) \quad \textrm{and}\\
			G^{(2)}_i(a_1,\dots,a_{n+1}) &=& f((x_1 * x_i,a_1+\varphi(x_1,x_i)),\dots,(x_{i-1} * x_i,a_{i-1}+\varphi(x_{i-1},x_i)), (x_{i+1},a_{i+1}), \dots,(x_{n+1},a_{n+1})).
		\end{eqnarray*}	
\end{Small}
Here, $G^{(1)}_i, G^{(2)}_i \in \Ca_b^1(A^{n+1}, \mathbb{R})$.	By linearity, we have
		\begin{eqnarray}\label{kappa {n+1}delta f}
			(\kappa^{n+1}\delta_E^n f)(x_1,\dots,x_{n+1})
			=
			\sum_{i=2}^{n+1} (-1)^i
			\big(
			m^{(n+1)}(G^{(1)}_i)
			-
			m^{(n+1)}(G^{(2)}_i)
			\big).
		\end{eqnarray}	
For each $2 \le i\le n+1$, let
		\begin{eqnarray*}
			v_i=
			\big(
			\varphi(x_1,x_i),\dots,\varphi(x_{i-1},x_i),0,\dots,0
			\big)
			\in A^{n+1}
		\end{eqnarray*}
and let $L_{v_i} : A^{n+1} \longrightarrow A^{n+1}$ be the translation by $v_i$. Then $G^{(2)}_i= H^{(2)}_i L_{v_i}$, where $H^{(2)}_i$ is the bounded map given by
		\begin{eqnarray*}
			H^{(2)}_i(a_1,\dots,a_{n+1})
			=
			f\big((x_1*x_i,a_1),\dots,(x_{i-1}*x_i,a_{i-1}),
			(x_{i+1},a_{i+1}),\dots,(x_{n+1},a_{n+1})\big).
		\end{eqnarray*}
		Since $m^{(n+1)}$ is left-invariant, we have $ m^{(n+1)}(G^{(2)}_i)= m^{(n+1)}(H^{(2)}_i).$ Now, by Lemma \ref{mean on product}, $m^{(n+1)}(G^{(1)}_i)=	m^{(n)}(G^{(1)}_i)$ and $m^{(n+1)}(G^{(2)}_i)=	m^{(n)}( G^{(2)}_i)=m^{(n)}( H^{(2)}_i)$. This, together with \eqref{kappa {n+1}delta f}, gives
		\begin{eqnarray*}
			(\kappa^{n+1}\delta_E^n f)(x_1,\dots,x_{n+1})
			&=&
			\sum_{i=2}^{n+1}
			(-1)^i
			\Big(
			m^{(n)}(\widetilde{f}_{x_1,\dots,\widehat{x_i},\dots,x_{n+1}})- m^{(n)}(\widetilde{f}_{x_1*x_i,\dots,x_{i-1}*x_i,x_{i+1},\dots,x_{n+1}})
			\Big)
			\\
			&=&
			(\delta_X^n \kappa^n f)(x_1,\dots,x_{n+1}).
	\end{eqnarray*}
		Hence, we have $\kappa^{n+1} \delta_E^n = \delta_X^n \kappa^n$ for each $n \ge 1$, and $\kappa^*$ is a chain map. 
	\par
	
	Finally, for assertion (3), let $f \in C_b^n(X, \mathbb{R})$. Then $\pi^n_b f \in C_b^n(E, \mathbb{R})$ satisfies
	\begin{eqnarray*}
		(\pi^n_b f)((x_1,a_1),\dots,(x_n,a_n)) = f(x_1,\dots,x_n).
	\end{eqnarray*}
Thus, the associated map on $A^n$ is constant, that is, $\widetilde{\pi^n_b f}_{x_1,\dots,x_n}(a_1,\dots,a_n)=f(x_1,\dots,x_n)$ for all $(a_1, \ldots, a_n) \in A^n$. Consequently, we have 
\begin{eqnarray*}
(\kappa^n \pi^n_b f)(x_1,\dots,x_n) &= & m^{(n)}\big(\widetilde{\pi^n_b f}_{x_1,\dots,x_n}\big)\\
&= & m^{(n)}( f(x_1,\dots,x_n)\, \mathbf{1})\\
&= & f(x_1,\dots,x_n)m^{(n)}( \mathbf{1})\\
&=& f(x_1,\dots,x_n).
\end{eqnarray*}
This gives $\kappa^n \pi^n_b = \mathrm{id}$ for each $n \ge 1$, which is desired.
\end{proof}

\color{black}

\begin{cor}
Let $X$ be a quandle, $A$ be an abelian group, $\varphi \colon X \times X \to A$ be a quandle $2$-cocycle, and $E = X \times_\varphi A$ be the associated abelian extension.
If $E$ is bounded, then $X$ is bounded.
\end{cor}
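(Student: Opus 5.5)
We will combine Theorem \ref{bounded iff comp map injective} with the injectivity statement of Theorem \ref{gmt for quandles}, and check that the comparison maps commute with pullback along $\pi$. Since $A$ is abelian, it is amenable and admits a left-invariant mean $m$, so Theorem \ref{gmt for quandles} applies. Hence $\pi_b^2\colon H_b^2(X,\mathbb{R})\to H_b^2(E,\mathbb{R})$ is injective.

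Next we consider the commutative square formed by $c^2_X\colon H^2_b(X,\mathbb{R})\to H^2(X,\mathbb{R})$, $c^2_E\colon H^2_b(E,\mathbb{R})\to H^2(E,\mathbb{R})$, the bounded pullback $\pi_b^2$, and the ordinary pullback $\pi^2\colon H^2(X,\mathbb{R})\to H^2(E,\mathbb{R})$. It commutes already at the cochain level, because both comparison maps are induced by the inclusion of bounded cochains into all cochains, and pulling back along $\pi$ (precomposition with $\pi\times\cdots\times\pi$) preserves boundedness and commutes with inclusion. We also note that $\pi$ is a quandle homomorphism, since the first coordinate of $(x,a)\star(y,b)$ is $x*y$. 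Therefore $\pi^*$ is a cochain map, and it preserves the degenerate subcomplex $D^*$ because $x_i=x_{i+1}$ in $X$ whenever $(x_i,a_i)=(x_{i+1},a_{i+1})$. So the square makes sense in quandle cohomology as well.

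Now suppose $E$ is bounded. By Theorem \ref{bounded iff comp map injective}, $\ker c^2_E=0$. Take $u\in\ker c^2_X$. Then $c^2_E(\pi_b^2 u)=\pi^2(c^2_X u)=0$, so $\pi_b^2u\in\ker c^2_E=0$. Injectivity of $\pi_b^2$ then gives $u=0$. Hence $\ker c^2_X=0$, and Theorem \ref{bounded iff comp map injective} shows that $X$ is bounded.

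The only step needing care is making sure Theorem \ref{gmt for quandles} is used in the same (quandle versus rack) setting as Theorem \ref{bounded iff comp map injective}. If Theorem \ref{gmt for quandles} is read for the rack complex, we will check that $\kappa^n$ also preserves degenerate cochains. This holds because if $x_i=x_{i+1}$, then $\widetilde f_{x_1,\dots,x_n}$ vanishes on the diagonal $a_i=a_{i+1}$, though not necessarily elsewhere. That makes the descent less obvious. If it causes trouble, we will work directly with the rack version of Theorem \ref{bounded iff comp map injective}, since Kędra's boundedness criterion is stated for the complex used there. Alternatively, we can argue metrically: $\pi$ maps connected components onto connected components and does not increase the quandle metric, because $\pi((e*^{\pm1}e_1)\cdots)=(\pi(e)*^{\pm1}\pi(e_1))\cdots$. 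So every component of $X$ is the image of a component of $E$ of finite diameter, which bounds the diameter of each component of $X$. We keep this direct argument as a backup.
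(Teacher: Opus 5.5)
Your main argument is the paper's own proof, and it is correct: injectivity of $\pi_b^2$ from Theorem~\ref{gmt for quandles}, the commutative square of comparison maps, and a diagram chase through Theorem~\ref{bounded iff comp map injective}.

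\textbf{On the degenerate-cochain claim.} Drop the sentence asserting that $\kappa^n$ preserves degenerate cochains, because it is false in general. Take $X$ a one-point quandle, $A=\mathbb{Z}$ and $\varphi=0$, so $E$ is the trivial quandle on $\mathbb{Z}$. Let $f$ be the bounded cochain with $f\big((x,a),(x,b)\big)=0$ for $a=b$ and $f\big((x,a),(x,b)\big)=1$ otherwise. It vanishes on degenerate pairs, yet $(\kappa^2 f)(x,x)=1$, since an invariant mean on $\mathbb{Z}$ assigns $0$ to the indicator of a point. So Theorem~\ref{gmt for quandles}, as proved, is a statement about the rack complex.

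This is harmless, for the reason you half-suggest. In degree $2$ the kernel of the comparison map is literally the same for the rack and quandle complexes: $\ker c^2=(C^2_b\cap\delta^1C^1)/\delta^1C^1_b$ in both. The two complexes have the same $1$-cochains, and $\delta^1 f(x,x)=f(x)-f(x*x)=0$ shows that $\delta^1C^1$ already consists of normalized cochains. Hence the whole chase can be run in the rack setting. The paper leaves this point implicit.

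\textbf{On the metric backup.} Your metric argument is correct and is a genuinely different, more elementary route. Lift the elements $x_1,\dots,x_n$ in a word $(\cdots(x*^{\pm1}x_1)\cdots)*^{\pm1}x_n$. This shows that each component of $X$ is the $\pi$-image of a component of $E$, and that $\pi$ does not increase the quandle metric. So each component of $X$ has diameter at most that of a component of $E$.

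That argument uses only that $\pi$ is a surjective quandle homomorphism. It therefore proves the stronger fact that every quotient of a bounded quandle is bounded, with no cocycle structure, no amenability of $A$, and no Gromov Mapping Theorem. The paper's cohomological route mainly serves to illustrate Theorem~\ref{gmt for quandles}; it is not needed for this corollary.
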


\begin{proof}
By Theorem \ref{bounded iff comp map injective}, a quandle is bounded if and only if the comparison map $c^2 \colon H_b^2(X,\mathbb{R}) \to H^2(X,\mathbb{R})$ is injective. The assertion now follows from Theorem \ref{gmt for quandles} and the commutative diagram
		\begin{equation*}
			\begin{tikzcd}
				H_b^2(X,\mathbb{R}) 
				\arrow[r,hook,"\pi^*_b"] 
				\arrow[d,"c^2_X"'] 
				& 
				H_b^2(E,\mathbb{R}) 
				\arrow[d,"c^2_E"] \\
				H^2(X,\mathbb{R})  
				\arrow[r,"\pi^*"'] 
				& 
				H^2(E,\mathbb{R}).
			\end{tikzcd}
		\end{equation*}
\end{proof}

\begin{cor}
The second bounded cohomology of abelian extensions of $\D_g^{ns}, \D_g, \C_g, \W_g, \W_g^1$ and $\W_g^+$ is infinite-dimensional for each $g \ge 1$.
\end{cor}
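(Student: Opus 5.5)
The plan is to combine Theorem~\ref{thm:D_g_cohomolofy_id} with the injectivity statement of Theorem~\ref{gmt for quandles}. Let $X$ be one of $\D_g^{ns}, \D_g, \C_g, \W_g, \W_g^1$ or $\W_g^+$. Let $A$ be an abelian group, $\varphi \colon X\times X \to A$ a quandle $2$-cocycle, and $E = X\times_\varphi A$ the corresponding abelian extension with projection $\pi\colon E\to X$.

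First, I would record that $A$ is amenable, since abelian groups are amenable \cite[Theorem 3.3]{MR3726870}. It therefore carries a left-invariant mean $m$, so the hypotheses of Theorem~\ref{gmt for quandles} are satisfied. That theorem gives, for $n=2$, an injective linear map
$$\pi_b^2\colon H_b^2(X,\mathbb{R}) \hookrightarrow H_b^2(E,\mathbb{R}).$$
Linearity holds because $\pi_b^2$ is induced by the cochain map $\pi_b^*$. The splitting $\kappa^*\pi_b^* = \mathrm{id}$ holds at the cochain level and $\kappa^*$ is a chain map, so the induced maps on cohomology also satisfy $\kappa^2\pi_b^2=\mathrm{id}$.

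Second, Theorem~\ref{thm:D_g_cohomolofy_id} says that $H_b^2(X,\mathbb{R})$ is infinite-dimensional for every $g\ge 1$. An injective linear map sends a linearly independent set to a linearly independent set. Hence $H_b^2(E,\mathbb{R})$ contains an infinite linearly independent family and is itself infinite-dimensional.

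There is no real obstacle, because the work is already done in the two cited theorems. The one point I would check is that Theorem~\ref{gmt for quandles} is stated in the bounded quandle cochain complex (the normalized quotient by $D^*$) and not only the rack complex. If $\kappa^*$ were defined only on the rack complex, I would verify that it preserves the degenerate subcomplex. That check is easy: a cochain vanishing whenever $x_i=x_{i+1}$ on $E$ averages to a cochain vanishing whenever $x_i=x_{i+1}$ on $X$, since every point of $E^n$ lying over such a tuple has equal $i$-th and $(i+1)$-th $X$-coordinates. With that in place, the argument above goes through verbatim in whichever of the two theories the statement is read.
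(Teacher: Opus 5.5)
Your main line is the same as the paper's. The corollary is Theorem~\ref{gmt for quandles} (injectivity of $\pi_b^2$, from the cochain-level splitting $\kappa^*\pi_b^*=\mathrm{id}$) combined with Theorem~\ref{thm:D_g_cohomolofy_id}. On the bounded complex $C_b^*$, which is where Theorem~\ref{gmt for quandles} is actually proved, this is complete.

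Your last paragraph, however, contains a false verification. So your claim that the argument goes through verbatim for normalized (quandle) cochains is unjustified.

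\textbf{Why the check fails.} A point $((x_1,a_1),\dots,(x_n,a_n))\in E^n$ lying over a tuple with $x_i=x_{i+1}$ is degenerate in $E$ only if also $a_i=a_{i+1}$, and the mean does not see that diagonal. Concretely, take $A=\Z/2\Z$ with its uniform mean, $\varphi=0$, and let $f(e_1,e_2)=1$ if $e_1\neq e_2$ and $f(e,e)=0$. Then $f$ is a bounded cochain vanishing on degenerate pairs of $E$. But $(\kappa^2 f)(x,x)$ is the average of $f((x,a_1),(x,a_2))$ over $A^2$, namely $\tfrac12\neq 0$. For infinite $A$ one gets $1$, since a left-invariant mean kills finitely supported functions. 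So $\kappa^*$ does not preserve the degenerate subcomplex.

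\textbf{How to recover the normalized statement in degree $2$.} Bypass $\kappa^*$ and work in $\ker c^2$, which is where the quasimorphism construction behind Theorem~\ref{thm:sufficient_condition2} places its classes. These classes are $[\delta\psi]$ with $\psi\colon X\to\R$ and $\delta\psi$ bounded, and $1$-cochains carry no normalization condition. Suppose $\delta(\psi\circ\pi)=\delta b$ with $b$ bounded.
\begin{enumerate}
\item Then $h=\psi\circ\pi-b$ is a $1$-cocycle, i.e.\ constant on components of $E$.
\item For each $x$, $h(x,\,\cdot\,)=\psi(x)-b(x,\,\cdot\,)$ is bounded. Since $h(x*y,a+\varphi(x,y))=h(x,a)$, the function $h(x*y,\,\cdot\,)$ is a translate of $h(x,\,\cdot\,)$.
\item Hence $h'(x)=m\big(h(x,\,\cdot\,)\big)$ is constant on components of $X$, and $\psi-h'$ is bounded.
\item Therefore $[\delta\psi]=0$ already in $H_b^2(X,\R)$.
\end{enumerate}
This gives injectivity of $\pi_b^2$ on $\ker c^2$ in either theory, which is what the infinite-dimensionality claim needs.
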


\begin{rem}
The map $\pi^*$ is not surjective in general. For example, let $X$ be a quandle such that $\Inn(X)$ is amenable and bounded. If $E = X \times \mathbb{R}$ is the trivial abelian extension, then $\Inn(E) \cong \Inn(X)$, and hence is amenable and bounded. It follows from \cite[Theorem 6.3]{MR4779104} that
$$
			H_b^n(X,\mathbb{R}) \cong \Map_b(\pi_0(X)^n,\mathbb{R}) \quad \textrm{and} \quad
			H_b^n(E,\mathbb{R}) \cong \Map_b(\pi_0(E)^n,\mathbb{R}).
$$
		Since $	\pi_0(E)\cong \pi_0(X) \times \mathbb{R},$	the space $\Map_b(\pi_0(E)^n,\mathbb{R})$ is strictly larger than $\Map_b(\pi_0(X)^n,\mathbb{R})$. Hence, the induced map $\pi^* \colon H_b^n(X,\mathbb{R}) \longrightarrow H_b^n(E,\mathbb{R})$ cannot be surjective.
\end{rem}
\medskip

\section{Idempotents in integral quandle rings of Dehn quandles}\label{section Idempotents in integral quandle rings}
Let $X$ be a quandle and $\mathbf{k}$ an associative ring with unity ${\bf 1}$. The {\it quandle ring/algebra} $\mathbf{k}[X]$ is defined as the free $\mathbf{k}$-module
$$
\mathbf{k}[X]=\left\{\sum_{i=1}^r n_ix_i~\Big| ~n_i\in \mathbf{k},~ x_i \in X  \text{ and } r \ge 1 \right\}
$$
 generated by $X$ and equipped with the ring multiplication given by
$$
\left(\sum_{i=1}^r n_i x_i\right)\left(\sum_{j=1}^s m_j y_j\right)=\sum_{i,j}^{r,s} n_i m_j \, x_i\ast y_j.
$$

These non-associative rings were introduced in \cite{MR3977818} as an effort to incorporate ring- and module-theoretic techniques into the study of quandles. An element $u \in \mathbf{k}[X]$ is called an {\it idempotent} if $u^2=u$. An element $\textstyle\sum_{i=1}^r n_i x_i\in \mathbf{k}[X]$ with each $n_i \ne 0$, is said to have {\it length} $r$. Since every element of $X$ can be viewed as an element of $\mathbf{k}[X]$ via the map $x \mapsto {\bf 1} x$, the first quandle axiom implies that elements of $X$ are idempotents in $\mathbf{k}[X]$. Such idempotents are referred to as {\it trivial idempotents} of $\mathbf{k}[X]$. Any idempotent of $\mathbf{k}[X]$ of length more than one is called a {\it non-trivial idempotent}. Characterizing the complete set of idempotents in integral quandle rings of notable classes of quandles is a problem of significant interest. Recent works \cite{arXiv:2601.07057, MR4450681, MR4565221} have addressed this question for free quandles and certain finite quandles. In this section, we explore the corresponding problem for the Dehn quandle $\D_g$ of a closed orientable surface $S_g$.

\begin{lem}\label{GIC more than zero curves}
Let $g \ge 1$ and let $\alpha_1$ and $\alpha_2$ be distinct simple closed curves on $S_g$ such that $i(\alpha_1, \alpha_2) \ge 1$. Then $\{\alpha_1, \alpha_2, \alpha_1\ast \alpha_2, \alpha_2\ast \alpha_1 \}$ is a set of distinct curves on $S_g$.
\end{lem}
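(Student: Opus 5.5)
We must show the four curves $\alpha_1,\alpha_2,T_{\alpha_2}(\alpha_1),T_{\alpha_1}(\alpha_2)$ are pairwise distinct. Since $\alpha_1\neq\alpha_2$ is given, there are five remaining pairs. The main tool is the standard intersection formula for Dehn twists (Farb--Margalit, Proposition 3.2): for simple closed curves $a,b$ and $k\neq 0$,
$$i(T_a^k(b),b)=|k|\, i(a,b)^2.$$
We will also use that $i(\cdot,\cdot)$ is invariant under mapping classes. Note that neither $\alpha_i$ can be $\textbf{0}$, since $i(\alpha_1,\alpha_2)\ge 1$.

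First, $\alpha_1\ast\alpha_2=T_{\alpha_2}(\alpha_1)\neq \alpha_1$. The formula gives $i(T_{\alpha_2}(\alpha_1),\alpha_1)=i(\alpha_1,\alpha_2)^2\ge 1$, while $i(\alpha_1,\alpha_1)=0$. Symmetrically, $\alpha_2\ast\alpha_1\neq\alpha_2$.

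Next, $T_{\alpha_2}(\alpha_1)\neq\alpha_2$. Otherwise applying $T_{\alpha_2}^{-1}$, which fixes $\alpha_2$, gives $\alpha_1=\alpha_2$, a contradiction. Symmetrically, $T_{\alpha_1}(\alpha_2)\neq \alpha_1$.

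The last pair is $T_{\alpha_2}(\alpha_1)$ versus $T_{\alpha_1}(\alpha_2)$, and I expect it to be the main obstacle. Suppose they are equal, and set $k=i(\alpha_1,\alpha_2)\ge1$. Then
$$i\bigl(T_{\alpha_2}(\alpha_1),\alpha_2\bigr)=i\bigl(T_{\alpha_1}(\alpha_2),\alpha_2\bigr)=k^2,$$
using the formula with $a=\alpha_1$, $b=\alpha_2$. On the other hand, $T_{\alpha_2}$ fixes $\alpha_2$, so
$$i\bigl(T_{\alpha_2}(\alpha_1),\alpha_2\bigr)=i\bigl(T_{\alpha_2}(\alpha_1),T_{\alpha_2}(\alpha_2)\bigr)=i(\alpha_1,\alpha_2)=k.$$
Hence $k=k^2$, so $k=1$.

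It remains to rule out $k=1$ directly. When $i(\alpha_1,\alpha_2)=1$, the braid relation gives $T_{\alpha_1}T_{\alpha_2}(\alpha_1)=\alpha_2$. If $T_{\alpha_2}(\alpha_1)=T_{\alpha_1}(\alpha_2)$, substituting yields $T_{\alpha_1}T_{\alpha_1}(\alpha_2)=\alpha_2$, that is, $T_{\alpha_1}^2(\alpha_2)=\alpha_2$. But the formula gives $i(T_{\alpha_1}^2(\alpha_2),\alpha_2)=2\neq0$, a contradiction. This completes all cases, and the only real work is this final intersection-number bookkeeping.
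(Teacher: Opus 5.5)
Your proof is correct. For the first four pairs it matches the paper: the intersection formula gives $T_{\alpha_2}(\alpha_1)\neq\alpha_1$, and cancelling $T_{\alpha_2}$ gives $T_{\alpha_2}(\alpha_1)\neq\alpha_2$.

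For the last pair you take a different route. The paper uses $T_{f(\alpha)}=fT_\alpha f^{-1}$ to turn $T_{\alpha_2}(\alpha_1)=T_{\alpha_1}(\alpha_2)$ into the group relation $T_{\alpha_2}T_{\alpha_1}T_{\alpha_2}^{-1}=T_{\alpha_1}T_{\alpha_2}T_{\alpha_1}^{-1}$. It then appeals to the structure of $\langle T_{\alpha_1},T_{\alpha_2}\rangle$:
\begin{itemize}
\item free of rank two when $i(\alpha_1,\alpha_2)\ge 2$ (Farb--Margalit, Theorem 3.14, a ping-pong argument);
\item satisfying the braid relation when $i(\alpha_1,\alpha_2)=1$.
\end{itemize}
It asserts without detail that both cases contradict the relation.

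You instead compute $i(T_{\alpha_2}(\alpha_1),\alpha_2)$ in two ways to get $k=k^2$. This disposes of every $k\ge 2$ using only the intersection formula and invariance of $i$. You then handle $k=1$ with the curve-level identity $T_{\alpha_1}T_{\alpha_2}(\alpha_1)=\alpha_2$.

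Your version is more elementary, since it avoids the free-group theorem. It also spells out the $i=1$ case, which in the paper needs an unstated step. Combining the braid relation with the assumed relation gives $T_{\alpha_2}^2T_{\alpha_1}=T_{\alpha_1}T_{\alpha_2}^2$, hence $T_{\alpha_2}^2(\alpha_1)=\alpha_1$. This is essentially your final contradiction with the roles of $\alpha_1$ and $\alpha_2$ swapped.

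The paper's group-theoretic framing is heavier, but it is reused in Case 2d of the idempotent computation. There, freeness of $\langle T_{\alpha_1},T_{\alpha_2}\rangle$ is used to show that many more curves are pairwise distinct at once.
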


\begin{proof}
If $\alpha_1=\alpha_1\ast \alpha_2=T_{\alpha_2}(\alpha_1)$, then $i(\alpha_1,\alpha_2)=0$, a contradiction. If $\alpha_2=\alpha_1\ast \alpha_2$, then $\alpha_1\ast \alpha_2=\alpha_2 * \alpha_2$. This gives $\alpha_1=\alpha_2$, which is again a contradiction. Similarly, we get $\alpha_1 \ne \alpha_2\ast \alpha_1$ and $\alpha_2 \ne \alpha_2\ast \alpha_1$.  Finally, if $\alpha_1\ast \alpha_2=\alpha_2\ast \alpha_1$, then $T_{\alpha_2}(\alpha_1)=T_{\alpha_1}(\alpha_2)$. Using \eqref{conjugation by mcg}, this gives $T_{\alpha_2}T_{\alpha_1}T_{\alpha_2}^{-1}=T_{\alpha_1}T_{\alpha_2}T_{\alpha_1}^{-1}$. By \cite[Proposition 3.11-3.13]{MR2850125}, if $i(\alpha_1,\alpha_2)=1$, then $T_{\alpha_1}$ and $T_{\alpha_2}$ satisfy the braid relation. Further, by ~\cite[Theorem 3.14]{MR2850125}, if $i(\alpha_1,\alpha_2) \ge 2$, then $\langle T_{\alpha_1}, T_{\alpha_2} \rangle$ is a free group of rank two. Both cases lead to contradictions, and hence $\alpha_1\ast \alpha_2\neq \alpha_2\ast \alpha_1$.
\end{proof}

\begin{prop}\label{idempotent structure}
For each $g \ge 1$, any convex linear combination of pairwise disjoint simple closed curves is an idempotent of $\Z[\D_g]$. Furthermore, any idempotent of $\Z[\D_g]$ of length at most three is a convex linear combination of pairwise disjoint simple closed curves.
\end{prop}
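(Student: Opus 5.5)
The plan is to argue directly with the product formula in $\Z[\D_g]$ and the fact that $\D_g$ is a basis of the free $\Z$-module. Throughout, ``convex linear combination'' means $\sum_i n_i\alpha_i$ with $n_i\in\Z$ and $\sum_i n_i=1$. Two basic observations will be used repeatedly.
\begin{enumerate}
\item If $i(\alpha,\beta)=0$, then $\alpha*\beta=T_\beta(\alpha)=\alpha$.
\item The augmentation $\varepsilon\colon\Z[\D_g]\to\Z$, $\sum n_i\alpha_i\mapsto\sum n_i$, is multiplicative, because $\varepsilon(x*y)=1$ for all $x,y\in\D_g$. Hence any idempotent $u$ satisfies $\varepsilon(u)^2=\varepsilon(u)$, so $\varepsilon(u)\in\{0,1\}$.
\end{enumerate}

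For the first assertion, let $u=\sum_{i=1}^r n_i\alpha_i$ with the $\alpha_i$ pairwise disjoint and $\sum n_i=1$. By (1), $\alpha_i*\alpha_j=\alpha_i$ for all $i,j$, so
$$u^2=\sum_{i,j}n_in_j\,\alpha_i=\Big(\sum_j n_j\Big)\sum_i n_i\alpha_i=u.$$

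For the second assertion, write $u=\sum_{i=1}^r n_i\alpha_i$ with distinct $\alpha_i$, all $n_i\neq 0$ and $r\le 3$. I expand
$$u^2=\sum_i n_i^2\alpha_i+\sum_{i\ne j}n_in_j\,\alpha_i*\alpha_j$$
and compare coefficients in the basis $\D_g$. The main task is to show that the $\alpha_i$ are pairwise disjoint. Once that holds, $u^2=\varepsilon(u)u$, and since $u\neq 0$ this forces $\varepsilon(u)=1$, which is exactly convexity.
\begin{itemize}
\item $r=1$: $n^2=n$ gives $n=1$.
\item $r=2$: suppose $i(\alpha_1,\alpha_2)\ge 1$. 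By Lemma~\ref{GIC more than zero curves}, the four curves $\alpha_1,\alpha_2,\alpha_1*\alpha_2,\alpha_2*\alpha_1$ are distinct. The coefficient of $\alpha_1*\alpha_2$ in $u^2$ is then $n_1n_2\neq 0$, while its coefficient in $u$ is $0$, a contradiction.
\end{itemize}

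The case $r=3$, with curves $\alpha,\beta,\gamma$, is the main obstacle. Here a ``new'' curve such as $\alpha*\beta$ could coincide with $\gamma$, or cancel against another product. I would split into cases according to the number ($1$, $2$ or $3$) of intersecting pairs among $\alpha,\beta,\gamma$, and use the following tools to rule out coincidences.
\begin{itemize}
\item Injectivity of $S_x$: $\alpha*\beta=\gamma*\beta$ forces $\alpha=\gamma$.
\item Lemma~\ref{GIC more than zero curves}: $\alpha*\beta\ne\beta*\alpha$ and $\alpha*\beta\notin\{\alpha,\beta\}$ when $i(\alpha,\beta)\ge1$.
\item Invariance of intersection numbers: $i(\alpha*\beta,\beta)=i(\alpha,\beta)$, and $i(\alpha*\beta,\delta)=i(\alpha,\delta)$ whenever $\delta$ is disjoint from $\beta$.
\item For coincidences such as $\alpha*\beta=\alpha*\gamma$, i.e. $T_\beta(\alpha)=T_\gamma(\alpha)$: the element $T_\gamma^{-1}T_\beta$ fixes $\alpha$. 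If $\beta,\gamma$ are disjoint, its action on $i(\cdot,\alpha)$ or on homology gives a contradiction. If $\beta,\gamma$ intersect, I would use the braid/free-group dichotomy cited in the proof of Lemma~\ref{GIC more than zero curves}.
\item Conjugation relations such as $\alpha*\beta=\beta*\gamma$. These I would translate via \eqref{conjugation by mcg} into equalities of Dehn twists, $T_\beta T_\alpha T_\beta^{-1}=T_\gamma T_\beta T_\gamma^{-1}$, and compare intersection numbers with $\beta$.
\end{itemize}

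Having limited which of the six off-diagonal products can coincide with each other or with $\alpha,\beta,\gamma$, I would then read off coefficients. In each case some product $\alpha_i*\alpha_j$ with $i(\alpha_i,\alpha_j)\ge1$ is a curve outside $\{\alpha,\beta,\gamma\}$. Its coefficient in $u^2$ is a sum of at most two terms $n_in_j$, and this sum must vanish. Combining these vanishing conditions with the diagonal equations $n_i^2+(\text{contributions})=n_i$ and with $\varepsilon(u)\in\{0,1\}$, I expect to force some $n_i=0$, contradicting length three. Thus all three pairs are disjoint, and the first paragraph's computation gives $\sum n_i=1$.
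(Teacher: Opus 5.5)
\textbf{Gap in the case $r=3$.} Your first assertion, the cases $r\le 2$, and the augmentation $\varepsilon$ are correct, and your split by the number of intersecting pairs is the same as the paper's. The gap is $r=3$, which you do not actually prove. The route you sketch is to first decide which of the six products $\alpha_i*\alpha_j$ can coincide, then read off coefficients at new curves. It rests on tools that do not deliver:
\begin{enumerate}
\item Freeness of $\langle T_\beta,T_\gamma\rangle$, or the braid relation, says nothing about whether $T_\gamma^{-1}T_\beta$ stabilises a third curve $\alpha$. So it cannot exclude $\alpha*\beta=\alpha*\gamma$.
\item For disjoint homologous $\beta,\gamma$, the map $T_\gamma^{-1}T_\beta$ acts trivially on homology, so the homology test fails there.
\item In $\alpha*\beta=\beta*\gamma$, comparing intersection numbers with $\beta$ only gives $i(\alpha,\beta)=i(\beta,\gamma)^2$, which is no contradiction.
\item ``At most two terms'' is unjustified. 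Injectivity of $S_{\alpha_j}$ only separates products with the same right factor, and Lemma~\ref{GIC more than zero curves} only separates $\alpha_i*\alpha_j$ from $\alpha_j*\alpha_i$. So a priori $\alpha_1*\alpha_2=\alpha_2*\alpha_3=\alpha_3*\alpha_1$ is not excluded.
\end{enumerate}

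\textbf{How to close it.} You never need coincidences among products. The coefficients of $\alpha_1,\alpha_2,\alpha_3$, together with $s:=\varepsilon(u)\in\{0,1\}$, suffice, using only Lemma~\ref{GIC more than zero curves} and injectivity of $S_x$.

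Suppose all three pairs intersect. By the lemma, the only product that can equal $\alpha_k$ is one (not both) of $\alpha_i*\alpha_j$, $\alpha_j*\alpha_i$. Hence
\[
n_k^2+\epsilon_k\,n_in_j=n_k,\qquad \epsilon_k\in\{0,1\},\quad \{i,j,k\}=\{1,2,3\}.
\]
The system is symmetric in the indices, so consider the number of $\epsilon_k$ equal to $1$:
\begin{enumerate}
\item None: every $n_k=1$, so $s=3$.
\item Only $\epsilon_3$: $n_1=n_2=1$ and $n_3^2-n_3+1=0$.
\item All except $\epsilon_3$: $n_3=1$ and $n_2=n_1-n_1^2$, so the second equation becomes $(n_1-n_1^2)^2+n_1^2=0$.
\item All three: summing gives $\sum n_k^2+e_2=s$ with $e_2=\sum_{i<j}n_in_j$, while $s^2=\sum n_k^2+2e_2$. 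Hence $e_2=s^2-s=0$ and $\sum n_k^2=s\le 1$.
\end{enumerate}
Each case contradicts $n_1n_2n_3\neq 0$ with all $n_k\in\Z$.

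Now suppose $\alpha_1$ meets $\alpha_2$ and $\alpha_3$, which are disjoint. Injectivity gives $\alpha_1*\alpha_2\neq\alpha_3*\alpha_2=\alpha_3$ and $\alpha_1*\alpha_3\neq\alpha_2*\alpha_3=\alpha_2$. Hence $n_1=1$ and $n_2(n_2+n_3-1)=-\epsilon n_3$, where $\epsilon\in\{0,1\}$ records whether $\alpha_3*\alpha_1=\alpha_2$. Since $n_2+n_3=s-1\in\{-1,0\}$:
\begin{enumerate}
\item If $n_2+n_3=0$, then $-n_2=\epsilon n_2$, which is impossible.
\item If $n_2+n_3=-1$, then $2n_2=\epsilon n_3$. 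This forces $\epsilon=1$, $n_3=2n_2$ and $3n_2=-1$, again impossible.
\end{enumerate}
The one-pair case goes exactly as your $r=2$ argument, since injectivity keeps $\alpha_1*\alpha_2$ off $\alpha_3$.

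\textbf{Comparison with the paper.} The paper excludes coincidences geometrically. It uses $i(T_\beta(\alpha),\alpha)=i(\alpha,\beta)^2$, the braid relation when $i(\alpha_1,\alpha_2)=1$, freeness when $i(\alpha_1,\alpha_2)\ge 2$, and a torus picture. Your augmentation route, completed as above, needs none of this geometry. It also sidesteps product--product coincidences such as $\alpha_1*\alpha_2=\alpha_3*\alpha_1$. The paper's Case~2c does not address these when it cites Lemma~\ref{GIC more than zero curves} for the distinctness of seven curves.
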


\begin{proof}
Consider the element $u=\textstyle\sum_{i=1}^r n_i\alpha_i\in \Z[\D_g]$, where $\textstyle\sum_{i=1}^r n_i=1$ and $\{\alpha_1,\alpha_2,\dots,\alpha_r\}$ is a multicurve on $S_g$. We see that 
$$u^2= \left(\sum_{i=1}^r n_i\alpha_i \right) \left(\sum_{j=1}^r n_j\alpha_j \right)=\sum_{i,j}^{r} n_i n_j \, \alpha_i\ast \alpha_j=\sum_{i,j}^{r} n_i n_j \, \alpha_i=u,$$ that is, $u$ is an idempotent in $\Z[\D_g]$. 
\par

Let $u \in \Z[\D_g]$ be an idempotent of length at most three. Then we have the following two cases:
\par

\noindent {\bf Case 1.} Suppose that $u=n_1\alpha_1+n_2\alpha_2$ is of length two. Then $u^2=u$ gives 
\begin{equation}\label{idempotent case1}
(n_1^2-n_1)\alpha_1+(n_2^2-n_2)\alpha_2+n_1n_2(\alpha_1\ast \alpha_2+\alpha_2*\alpha_1)=0.
\end{equation} 
Assume that $i(\alpha_1,\alpha_2) \ge 1$. By Lemma \ref{GIC more than zero curves}, $\alpha_1$, $\alpha_2$, $\alpha_1\ast \alpha_2$, and $\alpha_2\ast \alpha_1$ are distinct curves. It now follows from \eqref{idempotent case1} that $n_1^2-n_1=n_2^2-n_2=n_1n_2=0$. Since $n_1,n_2\neq 0$, we arrive at a contradiction. Hence, we must have $i(\alpha_1,\alpha_2)=0$. In this case, $\alpha_1\ast \alpha_2=\alpha_1$, $\alpha_2\ast \alpha_1=\alpha_2$ and \eqref{idempotent case1} takes the form $n_1(n_1+n_2-1)\alpha_1+n_2(n_2+n_1-1)\alpha_2=0$. Since $n_1,n_2\neq 0$, we get $n_1+n_2=1$, which is desired.
\par

\noindent {\bf Case 2.} Suppose that $u=n_1\alpha_1+n_2\alpha_2+n_3\alpha_3$ is of length three. Then the idempotency of $u$ gives
\begin{tiny}
\begin{equation}\label{idempotent case2}
(n_1^2-n_1)\alpha_1+(n_2^2-n_2)\alpha_2+(n_3^2-n_3)\alpha_3+n_1n_2(\alpha_1\ast \alpha_2+\alpha_2\ast \alpha_1)+n_2n_3(\alpha_2\ast \alpha_3+\alpha_3\ast \alpha_2)+n_1n_3(\alpha_1\ast \alpha_3+\alpha_3\ast \alpha_1)=0
\end{equation}
\end{tiny}
\noindent {\bf Case 2a.} If $\{\alpha_1,\alpha_2,\alpha_3\}$ is a multicurve, then \eqref{idempotent case2} becomes
$$
n_1(n_1+n_2+n_3-1)\alpha_1+n_2(n_1+n_2+n_3-1)\alpha_2+n_3(n_1+n_2+n_3-1)\alpha_3.
$$
Since $\alpha_i$'s are distinct curves and $n_i$'s are non-zero, we obtain $n_1+n_2+n_3=1$, which is desired.

\noindent {\bf Case 2b.} Assume that $i(\alpha_1,\alpha_2)\ge 1$ and $i(\alpha_1,\alpha_3)=i(\alpha_2,\alpha_3)=0$. Then \eqref{idempotent case2} becomes
$$
(n_1^2-n_1+n_1n_3)\alpha_1+(n_2^2-n_2+n_2n_3)\alpha_2+(n_3^2-n_3+n_2n_3+n_1n_3)\alpha_3+n_1n_2(\alpha_1\ast \alpha_2+\alpha_2\ast \alpha_1)=0.
$$

It follows from Lemma \ref{GIC more than zero curves} that $\{\alpha_1, \alpha_2, \alpha_1\ast \alpha_2,\alpha_2\ast \alpha_1 \}$ is a set of distinct curves. If $\alpha_3=\alpha_1\ast \alpha_2=T_{\alpha_2}(\alpha_1)$, then, it follows from \cite[Proposition 3.2]{MR2850125} that $0=i(\alpha_3,\alpha_1)=i(T_{\alpha_2}(\alpha_1),\alpha_1)=i(\alpha_1,\alpha_2)^2 \ge 1$, which is a contradiction. Similarly, $\alpha_3 \ne \alpha_2\ast \alpha_1$, and hence $\{\alpha_1, \alpha_2, \alpha_3, \alpha_1\ast \alpha_2, \alpha_2\ast \alpha_1 \}$ is a set of distinct curves. It now follows from \eqref{idempotent case2} that $n_1n_2=0$, which is a contradiction. Hence, this case is not possible. Similar arguments show that the following cases are not possible:
\begin{enumerate}[(i)]
\item $i(\alpha_1,\alpha_3) \ge 1$ and $i(\alpha_1,\alpha_2)=i(\alpha_2,\alpha_3)=0$.
\item $i(\alpha_2,\alpha_3) \ge 1$ and $i(\alpha_1,\alpha_3)=i(\alpha_1,\alpha_2)=0$.
\end{enumerate}

\noindent {\bf Case 2c.} Assume that $i(\alpha_1,\alpha_2) \ge 1$, $i(\alpha_1,\alpha_3) \ge 1$ and $i(\alpha_2,\alpha_3)=0$. Then \eqref{idempotent case2} becomes
\begin{small}
$$
(n_1^2-n_1)\alpha_1+(n_2^2-n_2+n_2n_3)\alpha_2+(n_3^2-n_3+n_2n_3)\alpha_3+n_1n_2(\alpha_1\ast \alpha_2+\alpha_2\ast \alpha_1)+n_1n_3(\alpha_1\ast \alpha_3+\alpha_3\ast \alpha_1)=0
$$
\end{small}
If $\alpha_3=\alpha_1\ast \alpha_2=T_{\alpha_2}(\alpha_1)$, then $0=i(\alpha_3,\alpha_2)=i(T_{\alpha_2}(\alpha_1),\alpha_2)=i(T_{\alpha_2}(\alpha_1),T_{\alpha_2}(\alpha_2))=i(\alpha_1,\alpha_2) \ge 1$, which is a contradiction. Similarly, if $\alpha_3=\alpha_2\ast \alpha_1=T_{\alpha_1}(\alpha_2)$, then $0=i(\alpha_3,\alpha_2)=i(T_{\alpha_1}(\alpha_2),\alpha_2)=i(\alpha_1,\alpha_2)^2 \ge 1$, which is a contradiction. If $\alpha_2=\alpha_1\ast \alpha_3=T_{\alpha_3}(\alpha_1)$, then $0=i(\alpha_2,\alpha_3)=i(T_{\alpha_3}(\alpha_1),\alpha_3)=i(T_{\alpha_3}(\alpha_1),T_{\alpha_3}(\alpha_3))=i(\alpha_1,\alpha_3) \ge 1$, a contradiction. Similarly, if $\alpha_2=\alpha_3\ast \alpha_1=T_{\alpha_1}(\alpha_3)$, then $0=i(\alpha_2,\alpha_3)=i(T_{\alpha_1}(\alpha_3),\alpha_3)=i(\alpha_1,\alpha_3)^2 \ge 1$, again a contradiction. It follows from Lemma \ref{GIC more than zero curves} that $\{\alpha_1, \alpha_2, \alpha_3, \alpha_1\ast\alpha_2, \alpha_2\ast \alpha_1,\alpha_1\ast\alpha_3, \alpha_3\ast \alpha_1 \}$ is a set of distinct curves. This implies that $n_1n_2=n_1n_3=0$. Since $n_i$'s are non-zero, we arrive at a contradiction. Similar arguments show that the following cases are not possible:
\begin{enumerate}[(i)]
\item $i(\alpha_1,\alpha_2)\ge 1$, $i(\alpha_2,\alpha_3) \ge 1$ and $i(\alpha_1,\alpha_3)=0$.
\item $i(\alpha_2,\alpha_3)\ge 1$, $i(\alpha_1,\alpha_3)\ge 1$ and $i(\alpha_1,\alpha_2)=0$.
\end{enumerate}

\noindent {\bf Case 2d.} Assume that $i(\alpha_1,\alpha_2)\ge 1$, $i(\alpha_1,\alpha_3)\ge 1$ and $i(\alpha_2,\alpha_3)\ge 1$. Suppose that $\alpha_3=\alpha_1\ast \alpha_2= T_{\alpha_2}(\alpha_1)$ and $i(\alpha_1,\alpha_2)=1$. It follows that $i(\alpha_1,\alpha_3)=i(\alpha_2,\alpha_3)=1$. Then the braid relation in $\M_g$ \cite[Proposition 3.12]{MR2850125} implies that $\alpha_2\ast \alpha_3=T_{\alpha_3}(\alpha_2)=T_{T_{\alpha_2}(\alpha_1)}(\alpha_2)=T_{\alpha_2}T_{\alpha_1}T_{\alpha_2}^{-1}(\alpha_2)=T_{\alpha_2}T_{\alpha_1}(\alpha_2)=\alpha_1$ and similarly $\alpha_3\ast \alpha_1=T_{\alpha_1}(\alpha_3)=T_{\alpha_1}T_{\alpha_2}(\alpha_1)=\alpha_2$. Thus, \eqref{idempotent case2} takes the form 
\begin{Small}
$$(n_1^2-n_1+n_2n_3)\alpha_1+(n_2^2-n_2+n_1n_3)\alpha_2+(n_3^2-n_3+n_1n_2)\alpha_3+ n_1n_2(\alpha_2\ast \alpha_1)+n_1n_3(\alpha_1\ast \alpha_3)+n_2n_3(\alpha_3\ast \alpha_2)=0.$$
\end{Small} 
By the change of coordinate principle~\cite[Section 1.3]{MR2850125}, it follows that $\{\alpha_1, \alpha_2, \alpha_3, \alpha_2\ast \alpha_1, \alpha_1\ast \alpha_3, \alpha_3\ast \alpha_2 \}$ is a set of distinct curves. See Figure~\ref{fig:six_distinct_curves_torus} showing these curves on the torus.
\begin{figure}
\centering
\includegraphics[scale=.8]{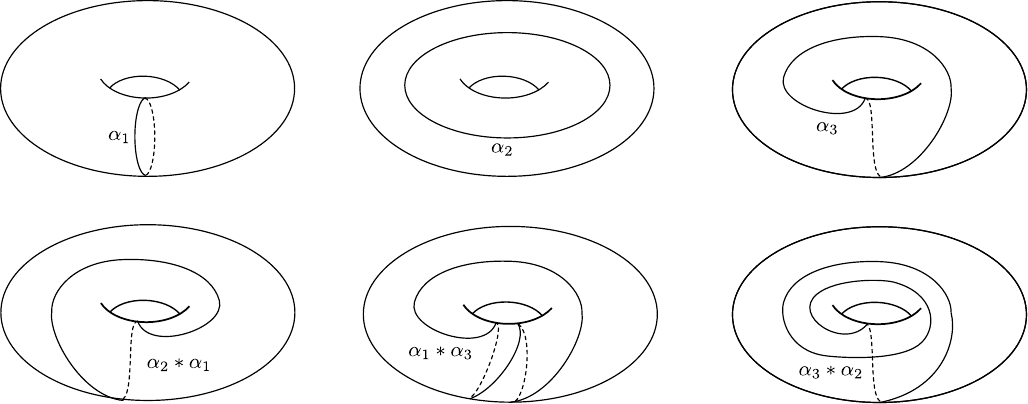}
\caption{Six distinct curves on the torus.}
\label{fig:six_distinct_curves_torus}
\end{figure}
Thus, the preceding equation gives $n_1n_2=n_2n_3=n_1n_3=0$. Since $n_i$'s are non-zero, we arrive at a contradiction.
\par
Next, suppose that $\alpha_3=\alpha_1\ast \alpha_2$ and $i(\alpha_1,\alpha_2) \ge 2$. In this case, $\langle T_{\alpha_1},T_{\alpha_2}\rangle$ is a free group of rank two. Since $\alpha_3=\alpha_1\ast \alpha_2$, it follows that $\{\alpha_1, \alpha_2, \alpha_3, \alpha_2\ast \alpha_1,\alpha_3\ast \alpha_1, \alpha_1\ast \alpha_3, \alpha_2\ast \alpha_3,\alpha_3\ast \alpha_2 \}$ is a set of distinct curves, else we will get a relation in the group $\langle T_{\alpha_1},T_{\alpha_2}\rangle$. Using this in \eqref{idempotent case2} leads to a contradiction. Hence, $\alpha_3\neq \alpha_1\ast \alpha_2$. 
\par 

Similarly, we can show that $\alpha_3\neq \alpha_2\ast \alpha_1$. In view of Lemma \ref{GIC more than zero curves}, we conclude that $\{\alpha_1,\alpha_2,\alpha_3,\alpha_1\ast\alpha_2,\alpha_2\ast \alpha_1\}$ is a set of distinct curves. Similar arguments show that $\{\alpha_1,\alpha_2,\alpha_3,\alpha_1\ast\alpha_3,\alpha_3\ast \alpha_1\}$ and $\{\alpha_1,\alpha_2,\alpha_3,\alpha_3\ast\alpha_2,\alpha_2\ast \alpha_3\}$ are sets of distinct curves. This implies that each $\alpha_i$ is distinct from the remaining eight curves. Thus, \eqref{idempotent case2} gives $n_1^2-n_1=n_2^2-n_2=n_3^2-n_2=0$. Since $n_i$'s are non-zero, we conclude that $n_1=n_2=n_3=1$. Consequently, \eqref{idempotent case2} becomes
$$\alpha_1\ast \alpha_2+\alpha_2\ast \alpha_1+\alpha_2\ast \alpha_3+\alpha_3\ast \alpha_2+\alpha_1\ast \alpha_3+\alpha_3\ast \alpha_1=0,
$$
which is not possible over integers. Hence, if an element of $\Z[\D_g]$ of length three is an idempotent, then it must be a convex linear combination of pairwise disjoint simple closed curves. This completes the proof. 
\end{proof}

The preceding result leads us to conjecture the following.

\begin{conj}
For each $g \ge 1$, the idempotents of $\Z[\D_g]$ are precisely the convex linear combinations of pairwise disjoint simple closed curves.
\end{conj}

The following related problem is also worth exploring.

\begin{prob}
Determine the set of all idempotents of $\Z[X]$ for $X=\C_g$, $\W_g$, $\W_g^1$ and $\W_g^+$ for each $g \ge 1$.
\end{prob}

Recall that, $\C_g$ is the set of isotopy classes of unoriented closed curves on $S_g$. Let $\mathbb{Z}\C_g$ be the free abelian group generated by $\C_g$. For curves $\alpha,\beta\in \C_g$ intersecting transversely,  define $[-,-]:\mathbb{Z}\C_g \times \mathbb{Z}\C_g\to \mathbb{Z}\C_g$ by
$$
[\alpha,\beta]=\sum_{p\in \alpha \cap \beta} i(p) \, \alpha \ast_{p} \beta,
$$
where $\alpha \ast_{p} \beta$ is the loop product of $\alpha$ and $\beta$ at the intersection point $p$ and $i(p)$ is the sign of the intersection at $p$. In the landmark work \cite[Theorem 5.3]{MR0846929},  Goldman proved that $\mathbb{Z}\C_g$ is a Lie algebra with Lie bracket $[-,-]$.

We conclude with the problem that originally motivated our work.

\begin{prob}
For each $g \ge 1$, the Goldman Lie algebra $\mathbb{Z}\C_g$ and the integral quandle algebra $\mathbb{Z}[\C_g]$ share the same underlying abelian group. This naturally raises the question of whether there exists a meaningful relationship between these two non-associative algebraic structures.
\end{prob}

\smallskip
\textbf{Acknowledgements.} 
PK acknowledges the Institute Postdoctoral Fellowship from IISER Mohali. DS acknowledges the Institute Postdoctoral Fellowship from IISER Tirupati. MS acknowledges support from the ARG-MATRICS grant with Grant No. ANRF/ARGM\\/2025/000406/MTR.

\end{document}